\newtheorem{theorem}{Theorem}
\newtheorem{cor}[theorem]{Corollary}
\newtheorem{prop}[theorem]{Proposition}
\newtheorem{definition}{Definition}
\newtheorem{lemma}[theorem]{Lemma}
\newtheorem{question}[theorem]{Question}
\newcommand{\uhr}{\upharpoonright}
\newcommand{\kO}{\mathcal{O}}
\newcommand{\Nat}{\omega}
\newcommand{\eps}{\varepsilon}
\newcommand{\C}{\mathcal C}
\renewcommand{\flat}{\operatorname{Flat}}
\newcommand{\august}[1]{#1}
\newcommand{\mathaugust}{}
\newcommand{\strikethrough}[1]{}
\title{An effective analysis of the Denjoy rank}
\author[L.\ B.\ Westrick]{Linda Brown Westrick}
\address{Department of Mathematics\\
Penn State University\\
University Park, PA, USA}
\email{westrick@psu.edu}
\thanks{The author was partially supported by the P.E.O. International
Scholar Award.}
\begin{document}

\begin{abstract}
We analyze the descriptive complexity of several $\Pi^1_1$ ranks 
from classical analysis which are associated to Denjoy integration. 
We show that $VBG, VBG_\ast, ACG$ and $ACG_\ast$ are $\Pi^1_1$-complete, 
answering a question of Walsh in case of $ACG_\ast$.  Furthermore, 
we identify the precise descriptive complexity of the set of functions 
obtainable with at most $\alpha$ steps of the transfinite process 
of Denjoy totalization: 
if $|\cdot|$ is the $\Pi^1_1$-rank naturally associated to 
$VBG, VBG_\ast$ or $ACG_\ast$, and if $\alpha<\omega_1^{ck}$, 
then $\{F \in C(I): |F| \leq \alpha\}$ is $\Sigma^0_{2\alpha}$-complete. 
These finer results are an application of the author's previous 
work on the limsup rank on well-founded trees.  Finally,
$\{(f,F) \in M(I)\times C(I) : F\in ACG_\ast \text{ and } F'=f \text{ a.e.}\}$ 
and $\{f \in M(I) : f \text{ is Denjoy integrable}\}$ are 
$\Pi^1_1$-complete, answering more questions of Walsh.
\end{abstract}

\maketitle

Real analysis of the early 20th century featured a number of naturally 
occuring $\Pi^1_1$-complete sets.  The most prominent example 
may be the set of differentiable functions on the unit interval $I$, 
considered as a subset of $C(I)$, the metric space of continuous 
functions on $I$ with the supremum norm.

Any $\Pi^1_1$ set $A$ may be decomposed as a transfinite union 
$\cup_{\alpha < \omega_1} A_\alpha$, where each $A_\alpha$ is Borel. 
When this is done in a sufficiently uniform way, the function 
which maps each $f \in A$ to the least $\alpha$ such that 
$f \in A_\alpha$ is called a $\Pi^1_1$-rank.  Sometimes a $\Pi^1_1$ 
set has an obvious and natural $\Pi^1_1$-rank.  This is 
the case for the set of well-founded trees $T \subseteq \omega^{<\omega}$
(the usual well-founded tree rank), the collection of countable 
closed subsets of $I$ (the Cantor-Bendixson rank), and the 
collection of continuous functions obtainable by Denjoy totalization, 
as we shall see shortly.  Other times, finding a rank that could be 
considered natural is more difficult.  For example, in 
\cite{kw} 
Kechris 
and Woodin defined a suitably natural rank on the set of differentiable 
functions.

When the rank on $A$ is natural, 
it becomes meaningful to ask for the precise descriptive 
complexity of the initial segments $A_\alpha$.  This was done 
implicitly
for the well-founded tree rank in \cite{GreenbergMontalbanSlaman2013}, 
for the Cantor-Bendixson rank in {\cite{CenzerMauldin1983} and} 
\cite{Lempp}, and for the 
Kechris-Woodin rank on differentiable functions in \cite{Westrick2014}. 
The purpose of this paper is to show that the method used in 
\cite{Westrick2014} generalizes to give descriptive complexities for 
three hierarchies from classical analysis related to Denjoy 
totalization.  We also give a new proof of
a result obtained by both Cenzer and Mauldin \cite{CenzerMauldin1983} 
and independently Lempp \cite{Lempp}, concerning
the descriptive complexity of the initial segments of the 
Cantor-Bendixson rank.

Denjoy totalization is a transfinite integration process 
developed by Denjoy in 1912
to solve the problem of recovering $F$ from $F'$ whenever 
$F$ is an everywhere differentiable function in $C(I)$.  
The process does a little more, recovering also some a.e. differentiable 
$F$, but not all of them.  The set of $F \in C(I)$ 
recoverable from $F'$ by Denjoy totalization is denoted $ACG_\ast$. 
The related sets $ACG, VBG_\ast$ and $VBG$ are described in the 
next section.

The main result is the following.
Let $|\cdot|_{VB}$, $|\cdot|_{VB_\ast}$, 
$|\cdot|_{AC}$ and $|\cdot|_{AC_\ast}$ denote the natural 
$\Pi^1_1$ ranks on $VBG, VBG_\ast, ACG$ and $ACG_\ast$ 
respectively.
\begin{theorem}\label{thm:1}
Let $X = VB, VB_\ast$ or $AC_\ast$, let $Y \in 2^\omega$, 
let $1<\alpha < \omega_1^Y$, 
and let $$A_\alpha = \{F\in C(I) : |F|_X \leq \alpha\}.$$  Then
$A_\alpha$ is $\Sigma^0_{2\alpha}(Y)$, and for any 
$\Sigma^0_{2\alpha}(Y)$ set $B$, there is a $Y$-computable reduction from 
$B$ to $A_\alpha$.
In particular, $A_\alpha$ is $\mathbf \Sigma^0_{2\alpha}$-complete,
and if $\alpha< \omega_1^{CK}$, then $A_\alpha$ is $\Sigma^0_{2\alpha}$-complete.
\end{theorem}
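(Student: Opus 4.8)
The plan is to treat the two halves of the statement separately: the upper bound $A_\alpha\in\Sigma^0_{2\alpha}(Y)$ by a direct level-by-level computation of the complexity of the natural rank, and the hardness by transferring $\Sigma^0_{2\alpha}(Y)$-completeness from the limsup rank on well-founded trees of \cite{Westrick2014} along an explicit coding of trees by continuous functions. Throughout I use that each rank $|\cdot|_X$ is the closure ordinal of a monotone derivation $D_X^F$ on the closed subsets of $I$: setting $E_0^F=I$, $E_{\beta+1}^F=D_X^F(E_\beta^F)$ and $E_\lambda^F=\bigcap_{\beta<\lambda}E_\beta^F$, where $D_X^F(E)$ deletes from $E$ every point possessing a rational neighborhood $U$ on which $F$ satisfies the relevant one-step regularity relative to $E\cap U$ (bounded variation for $VB$, $\ast$-bounded variation for $VB_\ast$, and, via the reformulation established earlier, the one-step Denjoy totalization condition for $AC_\ast$), one has $|F|_X=\min\{\alpha:E_\alpha^F=\emptyset\}$, so that $A_\alpha=\{F:E_\alpha^F=\emptyset\}$.

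For the upper bound I would prove, by transfinite induction on $\beta$ with $1\le\beta<\omega_1^Y$ and with all indices obtained effectively from a $Y$-code for $\beta$, that (a) ``$x\notin E_\beta^F$'' is $\Sigma^0_{2\beta}(Y)$ uniformly in $x,F$, and (b) for $\beta\ge 2$ the set $\{F:E_\beta^F=\emptyset\}$ is $\Sigma^0_{2\beta}(Y)$. The base case unwinds local $VB$ (resp. $VB_\ast$, resp. the Denjoy condition) at a point: an existential over rational neighborhoods and a variation bound, followed by a universal over finite configurations with a closed matrix, which is $\Sigma^0_2$. At a successor $\beta+1$, feeding a relation whose complement is $\Sigma^0_{2\beta}(Y)$ into ``$F$ is regular relative to $E_\beta^F\cap U$'' produces a statement of the form $\exists(\text{bound})\,\forall(\text{finite configurations})$ whose matrix is a finite disjunction of the $\Sigma^0_{2\beta}(Y)$ endpoint-membership clauses together with a closed (or open) clause, hence $\Sigma^0_{2\beta+2}(Y)$; the outer existential over rational neighborhoods does not increase this, giving (a), while for (b) one notes that ``$E_{\beta+1}^F=\emptyset$'' asserts that every point of $I$ is deleted, which by compactness of $I$ is equivalent to the existence of a finite cover of $I$ by rational neighborhoods each witnessing regularity relative to $E_\beta^F\cap(\cdot)$, again $\Sigma^0_{2\beta+2}(Y)$. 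At a limit $\lambda$, fix a $Y$-computable cofinal sequence $\beta_n\uparrow\lambda$ with $\beta_n\ge 2$; then ``$x\notin E_\lambda^F$'' is $\bigvee_n(x\notin E_{\beta_n}^F)$ and ``$E_\lambda^F=\emptyset$'' is $\bigvee_n(E_{\beta_n}^F=\emptyset)$ (the latter by monotonicity and compactness), each a countable union of $\Sigma^0_{2\beta_n}(Y)$ sets and hence $\Sigma^0_{2\lambda}(Y)$. Taking $\beta=\alpha$ in (b) gives $A_\alpha\in\Sigma^0_{2\alpha}(Y)$. The bookkeeping is routine for $VB$ and $VB_\ast$; the delicate point is that for $AC_\ast$ the one-step regularity must be used in the effective form from the preceding section — phrased via the a.e.\ derivative and ordinary Lebesgue integration on the surviving closed set — so that it, too, costs only the two quantifier alternations needed for the induction to close at $2\beta$ rather than at a larger level.

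For hardness I would invoke \cite{Westrick2014}: for $1<\alpha<\omega_1^Y$ the set $\{T\subseteq\omega^{<\omega}:\|T\|\le\alpha\}$, with $\|\cdot\|$ the limsup rank on well-founded trees, is $\Sigma^0_{2\alpha}(Y)$-complete under $Y$-computable reductions. It then suffices to build a $Y$-computable — indeed continuous — map $T\mapsto F_T\in C(I)$ with $|F_T|_X=\|T\|$ for every well-founded $T$ and to compose it with the given reduction. Following the template of \cite{Westrick2014}, one fixes in advance a system of closed intervals $\{I_\sigma\}_{\sigma\in\omega^{<\omega}}$ with $I_{\sigma\frown n}\subseteq I_\sigma$, the $\{I_{\sigma\frown n}\}_n$ pairwise disjoint and accumulating at one endpoint $p_\sigma$ of $I_\sigma$, and defines $F_T$ to be flat outside $\bigcup_{\sigma\in T}I_\sigma$ and to perform, inside each $I_\sigma$ with $\sigma\in T$, a recursively normalized oscillation near each child $I_{\sigma\frown n}$ that carries the sub-configuration for $T_{\sigma\frown n}$; the heights and widths are tuned so that the $\ast$-variation (resp. absolute-continuity defect, resp. one-step Denjoy obstruction) of $F_T$ near $p_\sigma$ relative to a surviving closed set is finite exactly when only finitely many children still contribute. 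One then checks by induction that $E_\beta^{F_T}$ is, up to homeomorphism and with the same rank data, the $\beta$-th limsup-derivative of $T$ — the limsup rather than the ordinary tree derivative arising precisely because $F_T$ regains its one-step regularity at $p_\sigma$ as soon as cofinitely many children have been deleted. Continuity of $T\mapsto F_T$ is secured, as usual, by keeping the interval system fixed and letting the definition of $F_T$ on $I_\sigma$ depend only on the finite part of $T$ seen so far, with geometrically shrinking slots reserved for the children not yet examined.

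I expect the principal obstacle to be exactly this coding: designing $F_T$ so that $|F_T|_X$ equals $\|T\|$ on the nose in all three regimes simultaneously — so that finitely many contributions at a level never create spurious rank (the surviving oscillations must remain jointly of bounded variation / absolutely continuous / Denjoy-tame once the rest are removed) while infinitely many always do, and so that the a.e.\ derivative of $F_T$ remains integrable-in-the-appropriate-sense on every surviving closed set, which is what forces the $AC_\ast$ derivation to behave as the $VB_\ast$ one. The $AC_\ast$ case of the upper bound, needing the effective one-step characterization of Denjoy totalization so that the quantifier count matches, is a secondary but genuine point. Once $T\mapsto F_T$ is in hand the remaining conclusions are immediate: $\mathbf\Sigma^0_{2\alpha}$-completeness of $A_\alpha$ follows by relativizing to some $Y$ with $\alpha<\omega_1^Y$, and $\Sigma^0_{2\alpha}$-completeness for $\alpha<\omega_1^{CK}$ by taking $Y$ computable.
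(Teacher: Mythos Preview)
Your overall architecture matches the paper: hardness via the limsup rank on well-founded trees together with a coding $T\mapsto F_T$ satisfying $|F_T|_X=|T|_{ls}$, and the upper bound via effective transfinite recursion on the derivation. For $X=VB$ and $X=VB_\ast$ your upper-bound argument is essentially the paper's Proposition~\ref{VBGsigma2alpha}, and your hardness sketch is the content of Section~\ref{sec:completeness}.

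The genuine gap is the $AC_\ast$ upper bound. You propose to run the same induction with a ``one-step Denjoy totalization condition'' that costs only two quantifier alternations, but you never exhibit such a form, and there is no obvious one: the defining clause ``$F$ is $AC_\ast$ on $E\cap[a,b]$'' is $\Pi^0_3$ (the $\forall\varepsilon\,\exists\delta\,\forall$ shape of absolute continuity), so the naive successor step gives $\Sigma^0_{2\beta+3}$ or $\Sigma^0_{2\beta+4}$, not $\Sigma^0_{2\beta+2}$. Your phrase ``phrased via the a.e.\ derivative and ordinary Lebesgue integration on the surviving closed set'' does not name a $\Sigma^0_2$ predicate, and the paper explicitly notes that the direct argument yields four jumps per step for $AC$ and $AC_\ast$.

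The paper's fix is to abandon the $AC_\ast$ derivation entirely and instead prove the identity
\[
ACG_{\ast\alpha}=VBG_{\ast\alpha}\cap\{F\in C(I):F\text{ satisfies Banach's condition }(S)\}.
\]
The point is that if $F$ satisfies Lusin's $(N)$ then $F_{E}$ and $F_{E,\ast}$ also satisfy $(N)$, so on such $F$ the $AC_\ast$ and $VB_\ast$ derivations coincide step by step, giving $|F|_{AC_\ast}=|F|_{VB_\ast}$. One cannot simply intersect with $(N)$ because its complexity is unknown, but every $F\in ACG_\ast$ satisfies the stronger Banach condition $(S)$ (via $VBG_\ast\Rightarrow(T_1)$ and $(N)\wedge(T_1)\Leftrightarrow(S)$), and $(S)$ has an equivalent $\Pi^0_3$ form (``interval-$(S)$''). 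Intersecting the already-obtained $\Sigma^0_{2\alpha}(Y)$ set $VBG_{\ast\alpha}$ with this fixed $\Pi^0_3$ set stays $\Sigma^0_{2\alpha}(Y)$ for $\alpha>1$. This global trick is the missing idea in your proposal; without it (or a genuine $\Sigma^0_2$ reformulation of the one-step $AC_\ast$ condition, which you have not supplied) the induction for $AC_\ast$ does not close at level $2\alpha$.
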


Let $M(I)$ denote the Polish space of measurable functions on the unit 
interval, with metric given by $d(f,g) = \int_I \min(1,|f-g|)$. 
Our next theorem answers three questions in \cite{walsh}.
\begin{theorem}\label{thm:2}
The following sets are all $\Pi^1_1$-complete:
\begin{enumerate}
\item $VBG, VBG_\ast, ACG$ and $ACG_\ast$
\item $\{(f,F) \in M(I) \times C(I) : F \in ACG_\ast \text{ and } F'=f \text{ a.e.}\}$
\item $\{f \in M(I) : F \text{ is Denjoy integrable}\}$
\end{enumerate}
\end{theorem}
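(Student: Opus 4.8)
The plan is to read off the $\Pi^1_1$ upper bounds from the defining transfinite derivations, and to obtain hardness by pushing well-founded trees through (a refinement of) the construction that already supplies the lower bounds in Theorem~\ref{thm:1}.

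\emph{Upper bounds.} For each of $VBG, VBG_\ast, ACG, ACG_\ast$, membership of $F$ is equivalent to termination before stage $\omega_1$ of the natural derivation associated to that class: one iterates the operation sending a closed set $P$ to the closed set of points of $P$ near which, relative to $P$, $F$ fails to be $VB$, $VB_\ast$, $AC$, or $AC_\ast$. Termination is well-foundedness of a tree coded uniformly by $F$, so each of the four classes is $\Pi^1_1$; this is exactly the natural rank referred to in the introduction. For (2), the condition $F\in ACG_\ast$ is $\Pi^1_1$, and since every $ACG_\ast$ function is differentiable a.e., the condition $F'=f$ a.e.\ may be replaced by the Borel statement "$F$ is differentiable a.e.\ with derivative $f$", so the intersection is $\Pi^1_1$. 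For (3), $f$ is Denjoy integrable exactly when Denjoy totalization of $f$ — integrate over the open set near which $f$ is summable, pass to limits, iterate — produces a total primitive before stage $\omega_1$, which is again well-foundedness of a tree coded by $f$; alternatively one may use that an $ACG_\ast$ primitive of $f$, when it exists, is unique modulo a constant, but the totalization description is cleaner and is what equips (3) with its natural rank.

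\emph{Hardness.} I would produce, uniformly computably in a tree $T\subseteq\omega^{<\omega}$, a function $F_T\in C(I)$ and a function $f_T\in M(I)$ such that (i) if $T$ is well founded then $F_T\in ACG_\ast$ and $F_T'=f_T$ a.e., and (ii) if $T$ is ill founded then $F_T\notin VBG$ and $f_T$ is not Denjoy integrable. Granting this, all three parts follow. Since each of the four classes $\mathcal{D}$ in (1) satisfies $ACG_\ast\subseteq\mathcal{D}\subseteq VBG$, the map $T\mapsto F_T$ reduces the $\Pi^1_1$-complete set of well-founded trees to each of them. The map $T\mapsto(f_T,F_T)$ reduces it to the set in (2): for well founded $T$ the pair lies in the set by (i), and for ill founded $T$ the inclusion $F_T\notin VBG\supseteq ACG_\ast$ rules it out. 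The map $T\mapsto f_T$ reduces it to the set in (3): for well founded $T$, $F_T$ witnesses that $f_T$ is Denjoy integrable, and for ill founded $T$ it is not, by (ii). To build $F_T$ I would run the function-building construction from the author's work on the limsup rank on well-founded trees that underlies the lower bounds of Theorem~\ref{thm:1}, fed an arbitrary tree $T$ without assuming well-foundedness: along an infinite branch one gets a point near which $F_T$ has unbounded variation on some piece of every countable decomposition of $I$, so $F_T\notin VBG$, and the totalization of the corresponding $f_T$ is stuck there. To get $f_T$ computable from $T$ and a.e.\ equal to $F_T'$, I would take $F_T=\sum_n c_n g_n$ with the $g_n$ piecewise-linear bumps on pairwise disjoint intervals $I_n$, the data $(c_n,g_n,I_n)$ depending only on the part of $T$ below the $n$-th node; then $f_T=\sum_n c_n g_n'$ is defined off the null set $I\setminus\bigcup_n I_n$, and since $\sum_n|I_n|<\infty$ the partial sums converge to $f_T$ in $M(I)$ regardless of the $c_n$, so $f_T$ is computable from $T$.

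The main obstacle is property (ii): forcing the $VB$-derivation (equivalently the $AC_\ast$-derivation) of $F_T$ to fail to terminate along an infinite branch — so that no countable decomposition of $I$ keeps $F_T$ of bounded variation on every piece — while keeping $F_T$ differentiable a.e.\ with the explicit, computable derivative $f_T$. This is where the combinatorics of the limsup-rank machinery already invoked for Theorem~\ref{thm:1} does the work: one arranges the derivation rank of $F_T$ to track the rank of $T$ exactly, and the ill-founded case is then the same construction with rank $\omega_1$. The remaining points — correctness of the Borel description of "$F'=f$ a.e.", that totalization order is a $\Pi^1_1$-rank, and the elementary regularity of the series $\sum_n c_n g_n$ — are routine.
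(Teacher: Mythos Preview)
Your overall plan matches the paper's: build a single computable map $T\mapsto F_T$ that lands in $ACG_\ast$ when $T$ is well founded and outside $VBG$ when $T$ is ill founded, and then derive all three parts from this one construction together with the computable map $T\mapsto F_T'$. The construction itself and the analysis of $F_T$ for well-founded and ill-founded $T$ are essentially what the paper does.

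There is, however, a genuine gap in your treatment of part (3) hardness. You write that when $T$ is ill founded, ``the totalization of the corresponding $f_T$ is stuck there,'' as if $F_T\notin VBG$ directly forces $f_T=F_T'$ to be non-Denjoy-integrable. It does not. Denjoy integrability of $f_T$ asks whether \emph{some} $G\in ACG_\ast$ satisfies $G'=f_T$ a.e.; two functions can share their a.e.\ derivative without differing by a constant (think of the Cantor function), so ruling out $F_T$ as a primitive does not rule out all primitives. The paper handles this with a separate argument (its Lemma~\ref{lem:D_integrable}): one isolates the closed set $P$ on which the construction concentrates its bad behaviour, observes that on each component $(c,d)$ of $I\setminus P$ the function $F_T$ is already $ACG_\ast$, so any putative $G\in ACG_\ast$ with $G'=f_T$ a.e.\ must satisfy $G(d)-G(c)=F_T(d)-F_T(c)$ on every such component, and then notes that the unbounded variation of $F_T$ on each portion of $P$ is witnessed \emph{entirely by endpoints of such components}. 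This transfers the unbounded variation to $G$ and contradicts $G\in ACG_\ast\subseteq VBG$. You have not supplied this step, and your identification of ``the main obstacle'' misses it.

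A smaller difference worth flagging: for the $\Pi^1_1$ upper bound in (3) you propose encoding termination of the totalization procedure as well-foundedness of a tree computable from $f$. The paper instead invokes an argument of Ajtai (recorded by Dougherty--Kechris) showing that an $ACG_\ast$ primitive of $f$, when it exists, lies in $\Delta^1_1(f)$, so the existential quantifier over $F$ can be bounded. Your route is plausible but would require checking that each stage of totalization is sufficiently effective in $f$; the paper's route sidesteps that bookkeeping.
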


The sets $\{F\in C(I) : |F|_{X} \leq 1\}$ are better known 
as the collection of functions of bounded variation when $X = VB, VB_\ast$ 
and the collection of absolutely continuous functions when $X = AC, AC_\ast$. 
The following results 
filling in the $\alpha=1$ case of Theorem \ref{thm:1}
have routine proofs but their statements may be of interest.
\begin{theorem}
\begin{enumerate}
\item The set of continuous functions of bounded variation is $\Sigma^0_2$-complete.
\item The set of absolutely continuous functions is $\Pi^0_3$-complete.
\end{enumerate}
\end{theorem}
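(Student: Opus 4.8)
The plan is to treat the two parts separately; in each, the complexity upper bound is a routine unwinding of the definition and the lower bound is an explicit computable reduction from a standard complete set. For the upper bounds, note first that a continuous $F$ has bounded variation iff there is $N\in\omega$ with $\sum_{i<k}|F(q_{i+1})-F(q_i)|\le N$ for every finite rational sequence $q_0<\cdots<q_k$ in $I$; for each fixed such sequence this is a closed condition on $F\in C(I)$, so the set of continuous functions of bounded variation is a countable union of closed sets, hence $\Sigma^0_2$. Writing $\omega_F(\delta)$ for the supremum of $\sum_i|F(b_i)-F(a_i)|$ over finite pairwise disjoint families of rational subintervals $(a_i,b_i)$ of $I$ with $\sum_i(b_i-a_i)<\delta$, continuity of $F$ gives that $F$ is absolutely continuous iff for every rational $\eps>0$ there is a rational $\delta>0$ with $\omega_F(\delta)\le\eps$; each condition $\omega_F(\delta)\le\eps$ is a countable intersection of closed sets, so the set of absolutely continuous functions is $\Pi^0_3$.

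For the lower bound in $(1)$, I would reduce $\operatorname{Fin}=\{x\in 2^\omega:x(n)=0\text{ for all but finitely many }n\}$, which is $\Sigma^0_2$-complete. Partition $[0,1)$ into consecutive intervals $S_n$ of length $2^{-n-1}$; let $F_x$ vanish at $1$ and at every endpoint of every $S_n$, be identically $0$ on $S_n$ when $x(n)=0$, and on $S_n$ when $x(n)=1$ be the piecewise-linear sawtooth built from $2^n$ congruent tents of height $2^{-n}$. Then $F_x\in C(I)$, the map $x\mapsto F_x$ is computable, and $F_x$ has total variation $2\,|\{n:x(n)=1\}|$, which is finite precisely when $x\in\operatorname{Fin}$. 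Hence the set of continuous functions of bounded variation is $\Sigma^0_2$-complete (and $\mathbf\Sigma^0_2$-complete).

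For the lower bound in $(2)$, I would reduce the $\Pi^0_3$-complete set $P=\{\alpha\in 2^{\omega\times\omega}:\{m:\alpha(n,m)=1\}\text{ is finite for every }n\}$, whose completeness follows from that of $\operatorname{Fin}$. Partition $[0,1)$ into consecutive intervals $R_n$ with $\sum_n|R_n|<1$, and on $R_n$ build a monotone continuous staircase of total rise $c_n:=2^{-n}$ by an iterated subdivision driven by $\alpha(n,0),\alpha(n,1),\dots$: at stage $k$, each active piece $J$ of length $\ell$, on which the staircase rises linearly by $c_n2^{-k}$, is replaced by two active pieces at the ends of $J$, each of rise $c_n2^{-k-1}$, of length $\tfrac14\ell$ with a flat gap of length $\tfrac12\ell$ between them when $\alpha(n,k)=1$, and of length $\tfrac12\ell$ with no gap (a trivial split that does not change the function) when $\alpha(n,k)=0$. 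Let $F_\alpha$ be the resulting pointwise limit, extended by a constant beyond $\bigcup_nR_n$; since the stage-$k$ modification moves the staircase on $R_n$ by at most $c_n2^{-k}$ in sup norm and $\sum_nc_n<\infty$, the partial constructions converge effectively and $\alpha\mapsto F_\alpha$ is computable into $C(I)$. If every row of $\alpha$ is finite, then only finitely many ``genuine'' ($\alpha(n,k)=1$) subdivisions occur on each $R_n$, so $F_\alpha$ is piecewise linear, hence Lipschitz with some constant $L_n$, on $R_n$, and $\omega_{F_\alpha}(\delta)\le\sum_n\min(c_n,L_n\delta)$, which tends to $0$ as $\delta\to0$ by splitting off a tail of the sum using $\sum_nc_n<\infty$; thus $F_\alpha$ is absolutely continuous. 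If some row $\alpha_{n_0}$ is infinite, then infinitely many genuine subdivisions occur on $R_{n_0}$, so the $2^k$ active pieces there at stage $k$ have total length tending to $0$, while $F_\alpha$ still rises by exactly $c_{n_0}2^{-k}$ across each of them (subsequent stages fix the endpoint values and total rise on each active piece); covering those pieces witnesses $\omega_{F_\alpha}(\delta)\ge c_{n_0}$ for every $\delta>0$, so $F_\alpha$ is not absolutely continuous. Therefore $P$ reduces computably to the set of absolutely continuous functions, which is $\Pi^0_3$-complete.

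I expect the only delicate point to be the reduction in $(2)$. The subdivision must be arranged so that late bits of $\alpha$ move $F_\alpha$ by only a little — this is the purpose of the trivial split at a $0$ bit, rather than simply skipping it — for otherwise $\alpha\mapsto F_\alpha$ would not even be continuous; and one must verify that when every row of $\alpha$ is finite the limit is genuinely absolutely continuous rather than merely of bounded variation or piecewise Lipschitz, which is exactly where the summability $\sum_nc_n<\infty$ enters, through the elementary dominated-convergence estimate for $\omega_{F_\alpha}$.
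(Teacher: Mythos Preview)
Your argument is correct, and for part (2) it follows essentially the same route as the paper: build Cantor-type monotone staircases on countably many disjoint subintervals, with the staircase on the $n$th interval degenerating to a singular function precisely when the $n$th coordinate of the input is infinite. The only differences are cosmetic---the paper drives its subdivisions by enumerations into c.e.\ sets $W^X_{g(n)}$ rather than by bits of $\alpha$, and certifies failure of absolute continuity via Lusin's condition $(N)$ (exhibiting a null set with non-null image) rather than directly from the $\varepsilon$--$\delta$ definition---while part (1) is simply declared routine in the paper and not proved.
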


In Section \ref{sec:prelim}, after some preliminaries we 
review the main tool in \cite{Westrick2014}, an analysis of the 
descriptive complexity of the limsup rank on the set of well-founded 
trees.  We also give the 
background on Denjoy totalization and the hierarchies $ACG_\ast, 
ACG, VBG_\ast$ and $VBG$.  
In Section \ref{sec:cantorBendixson} we apply this analysis 
to the simpler case of the Cantor-Bendixson derivative, recovering the 
aforementioned result of {Cenzer, Mauldin, and } Lempp.
In Section \ref{sec:completeness} we prove hardness results for 
the hierarchies on $ACG_\ast, ACG, VBG_\ast$ and $VBG$, obtaining 
Theorem \ref{thm:2} and one direction of Theorem \ref{thm:1}.
In Section \ref{sec:descriptions} we obtain
matching descriptive results for all of these hierarchies except $ACG$, 
giving the other direction of Theorem \ref{thm:1}. 
Section \ref{sec:questions} contains open questions.

The author would like to thank Ted Slaman and Sean Walsh for a number of 
useful and interesting discussions, \august{and the anonymous referee, 
whose comments have clarified the main arguments.}

\section{Preliminaries}\label{sec:prelim}

\subsection{Notation}

We use standard computability-theoretic notation.  
Trees $T \subseteq \omega^{<\omega}$ can be encoded by elements of Cantor 
space.  Let $Tr \subseteq 2^\omega$ be the set of codes for such trees. 
Given $T \in Tr$, we usually forget the encoding and treat $T$ as 
a subset of $\omega^{<\omega}$ anyway.  
The set $[T]\subseteq \omega^\omega$ is 
the set of paths through $T$.  Let $\sigma^\smallfrown\tau$ denote the 
concatenation of $\sigma$ and $\tau$.
 If $\sigma \in T$, let $T_\sigma$ denote 
$\{\tau : \sigma^\smallfrown\tau \in T\}$, 
 and for any $\sigma \in \omega^{<\omega}$, let $\sigma^\smallfrown T$ denote 
$\{\sigma^\smallfrown \tau : \tau \in T\}$.
\august{Trees of the form $T_\sigma$ where $\sigma = \langle n \rangle$
 are commonly encountered, and for these trees we drop the brackets 
and just write $T_n$ instead of $T_{\langle n \rangle}$.}
Let $[\sigma]$ denote $\{X \in \omega^\omega\ : \sigma \prec X\}$ or 
$\{X \in 2^\omega : \sigma \prec X\}$, depending on the context. 
When a tree is in fact a subset of $2^{<\omega}$, we usually give it 
the name $S$, reserving $T$ for trees in Baire space.  

The complement of a set $A \subseteq \omega^\omega$ is denoted $\neg A$. 
If $A,B,C,D\subseteq \omega^\omega$ and $f$ is a computable function 
from $A \cup B$ to $C\cup D$ with $f^{-1}(C) = A$ and $f^{-1}(D) = B$, 
we say that $f$ is a computable reduction from $(A,B)$ to $(C,D)$.
If $B = \neg A$ and $D = \neg C$, we say $f$ is a computable reduction 
from $A$ to $C$.
 
A tree 
$T \subseteq \omega^{<\omega}$ is well-founded, denoted $T \in WF$, 
if $[T] = \emptyset$.  A set $A \subseteq \omega^\omega$ is $\Pi^1_1$ 
if there is a computable reduction from $A$ to $WF$. 
A $\Pi^1_1$ set $A$ is $\Pi^1_1$-complete 
if there is a computable function from $WF$ to $A$.
The boldface notions $\mathbf \Pi^1_1$ and $\mathbf \Pi^1_1$-complete 
are obtained by replacing ``computable'' with ``continuous'', 
or equivalently, relativizing to an oracle.

For $n<\omega$, a set $X \in 2^\omega$ 
is $\Sigma^0_n$-complete if $X \equiv_1 \emptyset^{(n)}$.  To extend 
the notion of completeness to the ordinals,
we assume some familiarity with hyperarithmetic theory: Kleene's $\kO$,
ordinal notations, the sets $H_a$ for $a \in \kO$.  For $a \in \kO$, 
we use $|a|_\kO$ to denote the ordinal coded by $a$.  The supremum of 
ordinals represented in $\kO$ is $\omega_1^{CK}$.
Following \cite{ak}, if $n\geq \omega$
we say that a subset $X \subseteq \omega$ is 
$\Sigma^0_\alpha$-complete if
$X \equiv_1 H_{2^a}$, where $a \in \kO$ is any notation with 
$|a|_\kO = \alpha$. All these notions can relativize to an oracle 
$Y \in 2^\omega$, using notation $\kO^Y, |a|_\kO^Y, H_a^Y, \omega_1^Y, 
\Sigma^0_{\alpha}(Y)$. 

These computability notions on subsets of $\omega$ can be extended to 
effective topological notions on subsets of $\omega^\omega$.
A set $A \subseteq \omega^\omega$ is $\Sigma^0_{\alpha}(Y)$ if 
$\omega \leq \alpha < \omega_1^Y$ and there is an index 
$e_0$ and a notation $a \in \kO^{Y\oplus \emptyset}$ 
such that for all $X \in \omega^\omega$, 
$a \in \kO^{Y\oplus X}$ with $|a|^{Y\oplus X} = \alpha$ and
$$X \in A \iff e_0 \in H_{2^a}^{Y\oplus X}.$$
If $\alpha < \omega$, replace $H_{2^a}^{Y\oplus X}$ with $H_a^{Y\oplus X}$. 
When showing that a set is $\Sigma^0_{\alpha}(Y)$, we usually use 
effective transfinite recursion.  We assume familiarity with this 
method and sometimes avoid explicit mention of the sets 
$H_{2^a}^{Y\oplus X}$ while using it.

It is well-known that 
$\mathbf \Sigma^0_\alpha = \bigcup_{\substack{Y \in 2^\omega\\ \alpha < \omega_1^Y}} \Sigma^0_\alpha(Y)$,
where $\mathbf \Sigma^0_\alpha$ 
refers to the $\alpha$th level of the Borel hierarchy.

We use $I$ to refer to the unit interval, $C(I)$ for the space 
of continuous real-valued functions on $I$ with the supremum norm, and 
$M(I)$ for the space of measurable functions on the unit 
interval, with metric given by $d(f,g) = \int_I \min(1,|f-g|)$. 
In $C(I)$, the piecewise linear functions with rational endpoints 
form a countable dense subset; computing a element $F$ of $C(I)$ means 
providing, uniformly in $\eps$, a piecewise linear function within $\eps$ 
of $F$ in the supremum norm.
In $M(I)$, the step functions with 
rational values and discontinuities at 
finitely many rational points form a countable 
dense subset, and can be used as the ideal points in a representation 
of $M(I)$ as an effectively presented metric space.  Thus
computing an element $f$ of $M(I)$ means providing, uniformly in $\eps$, 
an ideal point of the type described above which is $\eps$-close to $f$. 
\august{A closed subset $P$ of $I$ is always coded by an enumeration of the 
open intervals $(p,q)$ with rational endpoints such that $[p,q] \cap P = \emptyset$.}

\subsection{Limsup rank}
Our main tool is the \emph{limsup rank} 
on well-founded trees, which was defined in \cite{Westrick2014} 
as follows.

\begin{definition}
Let $T\subseteq \omega^{<\omega}$ be a well-founded tree.  If $T = \emptyset$, 
define $|T|_{ls} = 0$.  Otherwise, define
$$|T|_{ls} = \max( \sup_n |T_n|_{ls}, (\limsup_n |T_n|_{ls}) + 1 ).$$
\end{definition}

This rank is designed to line up nicely with Cantor-Bendixson type 
derivation processes in a way that will be explained below.

\begin{theorem}[\cite{Westrick2014}]\label{ladrthm}
For all constructive $\alpha > 1$,
$\{e : \phi_e \text{ codes a tree $T$ with } |T|_{ls} \leq \alpha\}$ is 
$\Sigma_{2\alpha}$-complete.
\end{theorem}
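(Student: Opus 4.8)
The plan is to establish the two halves separately: membership in $\Sigma^0_{2\alpha}$ and $\Sigma^0_{2\alpha}$-hardness, both uniformly in a notation for $\alpha$. For the upper bound, I would argue by effective transfinite recursion on $\alpha$. Given an index $e$ for a tree $T$, the condition $|T|_{ls}\leq\alpha$ unpacks, via the defining equation, into the conjunction of two requirements: first, $\sup_n|T_n|_{ls}\leq\alpha$, i.e.\ $|T_n|_{ls}\leq\alpha$ for every $n$; and second, $(\limsup_n|T_n|_{ls})+1\leq\alpha$, which (since $\alpha>1$) means that for all but finitely many $n$ we have $|T_n|_{ls}\leq\beta$ for some $\beta<\alpha$ — and when $\alpha$ is a successor $\beta_0+1$ this is just $|T_n|_{ls}\leq\beta_0$ for cofinitely many $n$, while for limit $\alpha$ one quantifies existentially over $\beta<\alpha$. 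The first requirement is a countable intersection of the inductively-available $\Sigma^0_{2\beta}$-type conditions (taking $\beta=\alpha$ coordinatewise), hence $\Pi^0_{2\alpha}$ roughly speaking; the second adds an "$\exists$ finite exceptional set, $\forall$ larger $n$" wrapper, contributing one more existential layer. Bookkeeping the cases (successor versus limit $\alpha$, and the jump in complexity that the "$+1$" induces) shows the whole thing sits at level $2\alpha$ of the Borel hierarchy, and the recursion is effective in a notation $a$ with $|a|_{\kO}=\alpha$, giving the lightface $\Sigma^0_{2\alpha}$ bound. The doubling — why each successor step in $\alpha$ costs two levels rather than one — is exactly the point of the limsup rank's design: resolving $|T_n|_{ls}\leq\beta$ for cofinitely many $n$ is one quantifier beyond knowing each individual $|T_n|_{ls}\leq\beta$, and that individual fact already cost $2\beta$.

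For hardness, I would build, uniformly in a notation $a$ for $\alpha$, a computable reduction carrying an arbitrary $\Sigma^0_{2\alpha}$ set $B\subseteq\omega^\omega$ (equivalently membership in $H_{2^a}^{X}$) to $\{e:\phi_e$ codes $T$ with $|T|_{ls}\leq\alpha\}$. The construction is again by effective transfinite recursion on $\alpha$, producing from a $\Sigma^0_{2\alpha}$ predicate a tree whose limsup rank is $\leq\alpha$ exactly when the predicate holds and is $\geq\alpha+1$ (or at least fails to be $\leq\alpha$) otherwise. The base case $\alpha=2$ handles $\Sigma^0_4$ directly. At a successor $\alpha=\beta+1$, a $\Sigma^0_{2\beta+2}$ set has the form $\{X:\exists n\,\forall m\,R(X,n,m)\}$ with each "$\forall m\,R$" slice being $\Pi^0_{2\beta+1}$, i.e.\ a countable intersection of $\Sigma^0_{2\beta}$ conditions; I feed each such $\Sigma^0_{2\beta}$ condition into the inductive reduction to get trees of limsup rank $\leq\beta$ when satisfied, hang appropriately many of them below a common node so that the node's rank is controlled by a $\limsup$ over the $n$-branches, and arrange that the $\exists n$ is witnessed precisely when some branch achieves rank $\leq\beta$ cofinitely — pushing the whole tree to rank exactly $\beta+1=\alpha$ on the positive side. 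Limit stages are handled by taking a fundamental sequence $\beta_k\nearrow\alpha$ from the notation and amalgamating reductions at levels $\beta_k$, again using that $\{e:\phi_e$ codes a tree of limsup rank $\leq\alpha\}$ must on the negative side have rank strictly exceeding $\alpha$ so that the two alternatives stay separated.

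The main obstacle I anticipate is the bookkeeping at successor stages on the hardness side: I must guarantee not merely that the rank is $\leq\alpha$ in the positive case, but that it is genuinely $>\alpha$ (or otherwise outside the target set) in the negative case, and this requires carefully controlling how many subtrees of each rank $<\alpha$ I attach at a node — too few and a negative instance might still have rank $\leq\alpha$, too many and a positive instance might overshoot. Getting the $\limsup$ in the rank definition to do exactly the work of the "$\exists n\,\forall m$" quantifier block, with the right parity so that one step of $\alpha$ matches two steps of the Borel hierarchy, is the delicate part; the upper-bound direction, by contrast, is a relatively routine (if notation-heavy) induction once the case analysis on successor/limit $\alpha$ is laid out.
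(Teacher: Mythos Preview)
The paper does not prove Theorem~\ref{ladrthm}; it is quoted from the author's earlier work \cite{Westrick2014} and used as a black box (the only commentary is the remark about why $\alpha=1$ is excluded). So there is no proof in this paper to compare your proposal against.

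That said, your sketch is broadly the right shape for how such a result is established, and it mirrors the pattern the present paper uses repeatedly for other ranks (Cantor--Bendixson in Section~\ref{sec:cantorBendixson}, the $VBG$ hierarchy in Proposition~\ref{VBGsigma2alpha}): an effective transfinite recursion for the upper bound, and a uniform tree-building reduction for hardness. Your identification of the delicate point --- controlling the rank in the negative case at successor stages so that the $\exists n\,\forall m$ block aligns with a single increment of the limsup rank --- is accurate; that bookkeeping is indeed where the work lies in \cite{Westrick2014}. If you want to verify your outline in detail you would need to consult that reference rather than the present paper.
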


The only reason for not allowing $\alpha = 1$ in the above theorem 
is that it is $\Pi_2$ to tell whether $\phi_e$ is total, that is, 
whether it codes anything at all.  

Almost for free, we can make topological claims in addition to 
computational ones.  The next theorem follows from Theorem 
\ref{ladrthm} by relativization.

\begin{theorem}\label{ladredst}
For any nonzero $\alpha < \omega_1$, let  
$$A_\alpha = \{T \in Tr : |T|_{ls} \leq \alpha\}.$$  Then
if $\alpha < \omega_1^Y$, we have $A_\alpha \in \Sigma^0_{2\alpha}(Y)$, 
and for any set $B \in \Sigma^0_{2\alpha}(Y)$, there is a $Y$-computable 
reduction from $B$ to $A_\alpha$.  In particular, $A_\alpha$ is 
$\mathbf \Sigma^0_{2\alpha}$-complete, and if $\alpha < \omega_1^{CK}$, 
then $A_\alpha$ is $\Sigma^0_{2\alpha}$-complete.
\end{theorem}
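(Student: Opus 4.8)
The plan is to derive this statement directly from Theorem \ref{ladrthm} by a routine relativization and then use the standard characterization $\mathbf\Sigma^0_\alpha = \bigcup_{Y,\ \alpha<\omega_1^Y} \Sigma^0_\alpha(Y)$ quoted in the preliminaries. First I would observe that the map $T \mapsto \langle e : \phi_e^Y \text{ codes } T\rangle$ is the same combinatorial object Theorem \ref{ladrthm} analyzes, just with the set of (codes for) recursive trees replaced by the set $Tr$ of all trees coded in $2^\omega$; relativizing every instance of ``computable'' and ``$\Sigma_{2\alpha}$'' in the proof of Theorem \ref{ladrthm} to an oracle $Y$ with $\alpha<\omega_1^Y$ yields: (i) $A_\alpha = \{T\in Tr : |T|_{ls}\le\alpha\}$ is $\Sigma^0_{2\alpha}(Y)$, and (ii) for any $B\in\Sigma^0_{2\alpha}(Y)$ there is a $Y$-computable reduction from $B$ to $A_\alpha$. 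Here one should note that the awkward exclusion of $\alpha=1$ in Theorem \ref{ladrthm} (caused by the $\Pi_2$ difficulty of deciding totality of $\phi_e$) simply does not arise for $A_\alpha$ as a subset of $Tr$, since membership in $Tr$ is already built into the space; hence the hypothesis here is merely $\alpha\ge 1$ rather than $\alpha>1$.

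Next, for the ``in particular'' clauses: since the above holds for every $Y$ with $\alpha<\omega_1^Y$, statement (i) gives $A_\alpha\in\bigcap_{Y:\alpha<\omega_1^Y}\Sigma^0_{2\alpha}(Y)\subseteq\mathbf\Sigma^0_{2\alpha}$, and statement (ii) gives, for every $B\in\mathbf\Sigma^0_{2\alpha}$, a continuous reduction from $B$ to $A_\alpha$ — choose $Y$ with $B\in\Sigma^0_{2\alpha}(Y)$ and $\alpha<\omega_1^Y$, apply (ii). So $A_\alpha$ is $\mathbf\Sigma^0_{2\alpha}$-complete. When $\alpha<\omega_1^{CK}$ we may take $Y=\emptyset$ throughout, so $A_\alpha$ is (lightface) $\Sigma^0_{2\alpha}$ and every $\Sigma^0_{2\alpha}$ set computably reduces to it, i.e. $A_\alpha$ is $\Sigma^0_{2\alpha}$-complete.

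The only point requiring a little care — and the one I'd expect to be the main (minor) obstacle — is making sure the relativization of Theorem \ref{ladrthm} is literally uniform in $Y$, i.e. that the index $e_0$ and notation $a$ witnessing $\Sigma^0_{2\alpha}(Y)$-ness of $A_\alpha$ can be chosen independently of $Y$, matching the definition of $\Sigma^0_\alpha(Y)$ given in the preliminaries (where $a\in\kO^{Y\oplus\emptyset}$ and $e_0$ are fixed before $Y\oplus X$ is supplied). This is immediate once one inspects that the proof of Theorem \ref{ladrthm} proceeds by effective transfinite recursion along a fixed notation for $\alpha$, with all oracle access of the form ``relative to the tree presented by the input'' — nothing in the construction depends on which oracle is used to present that tree. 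Everything else is bookkeeping: translating between the index-set formulation of Theorem \ref{ladrthm} and the subset-of-$Tr$ formulation here, and unwinding the definition of $\mathbf\Sigma^0_{2\alpha}$-completeness.
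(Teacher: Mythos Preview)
Your proposal is correct and follows essentially the same route as the paper: relativize Theorem~\ref{ladrthm} to the oracle $Y$, then read off the boldface and lightface consequences. The paper makes the index bookkeeping a bit more explicit---it fixes a notation $a$ with $|a|_\kO^{X\oplus Y}=2\alpha$ for all $X$, writes down the uniform $1$-equivalence $H_{2^a}^{X\oplus Y}\equiv_1\{e:\phi_e^{X\oplus Y}\text{ codes }T\text{ with }|T|_{ls}\le\alpha\}$, and then chases the two reductions through a universal index $d_0$ (for membership) and a universal $\Sigma^0_{2\alpha}(Y)$ set (for hardness)---but the content is the same as what you sketched.

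One small clarification: in your last paragraph you phrase the uniformity requirement as ``uniform in $Y$'' and ``independently of $Y$,'' but the definition of $\Sigma^0_{2\alpha}(Y)$ fixes $Y$ and demands that $e_0$ and $a$ work for all inputs $X$. So the point is uniformity in $X$ (the tree being tested), not in $Y$; your subsequent explanation (``nothing in the construction depends on which oracle is used to present that tree'') gets this right, and the paper's footnote makes the same observation. The index $e_0$ and notation $a$ are allowed to depend on $Y$ and on $\alpha$.
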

\begin{proof} Let $a$ be a notation such that for all $X$, 
$a \in \kO^{X\oplus Y}$ and $|a|_{\kO}^{X\oplus Y} = 2\alpha$.
For all $X$, by relativization we have
\begin{equation}\label{eqn}
H_{2^a}^{X\oplus Y} \equiv_1 \{e : \phi_e^{X\oplus Y} 
\text{ codes a tree $T$ with } |T|_{ls} \leq \alpha\},
\end{equation}
where the pair of reductions witnessing the 1-equivalence 
do not depend on $X$.\footnote{Because both sets consist of machine 
indices for machines with access to $X\oplus Y$, the existence 
of a single pair of computable reductions follows 
from the existence of a single pair of uniformly $X\oplus Y$-computable 
reductions.}  Letting $d_0$ be such that $\phi_{d_0}^{X\oplus Y} = X$,
the reverse reduction in (\ref{eqn}) provides $e_0$ such that for 
all $X$,
$$X \in A_\alpha \iff \phi_{d_0}^{X\oplus Y} \in A_\alpha 
\iff e_0 \in H_{2^a}^{X\oplus Y},$$
so $A_\alpha \in \Sigma^0_{2\alpha}(Y)$.

To show that a given $B \in \Sigma^0_{2\alpha}(Y)$ can be 
$Y$-computably reduced to $A_\alpha$, it suffices to consider
$B = \{X : n_0 \in H_{2^a}^{X\oplus Y}\}$ where $n_0$ is chosen 
to make $B$ $\Sigma^0_{2\alpha}(Y)$-universal.  The forward 
reduction in (\ref{eqn}) provides $m_0$ such that 
$X \in B \iff \phi_{m_0}^{X\oplus Y} \in A_\alpha$, and the mapping 
$X\mapsto \phi_{m_0}^{X\oplus Y}$ is $Y$-computable.
\end{proof}

\subsection{Denjoy totalization}
Classical real analysis includes the study of absolutely continuous 
functions, functions of bounded variation, and countable generalizations 
of these notions.  We consider four classes of real-valued 
functions on $I$.  They are $VBG$ (generalized bounded variation), 
$VBG_\ast$ (generalized bounded variation in the restricted sense), 
$ACG$ (generalized absolutely continuous) and $ACG_\ast$ (generalized 
absolutely continuous in the restricted sense).

To define these classes it is necessary to generalize the well-known 
definitions of bounded variation and absolute continuity to take into 
account also a closed set $E$ to which the function $F$ should be in some 
sense restricted.  The non-asterisk definitions are the literal 
restrictions.  The others take into account also the values 
of $F$ outside of $E$.  The \emph{oscillation} of a function 
$F$ on an interval $(a,b)$ is defined as 
$\omega(F,a,b) = \sup_{x,y \in (a,b)}
|F(x) - F(y)|$.

\begin{definition} Let $F\in C(I)$ and let $E\subseteq I$.
\begin{enumerate}
\item We say $F$ is $VB$ (respectively $VB_\ast$) on $E$
if there is an $N$ such that for all non-decreasing 
sequences $a_0,b_0,\dots a_k,b_k \in E$, 
we have $\sum_i |F(b_i) - F(a_i)| < N$ 
(respectively $\sum_i \omega(F,a_i,b_i) < N$). 
\item We say $F$ is $AC$ (respectively $AC_\ast$) on $E$
if for all $\eps$ there is a $\delta$ such 
that for all non-decreasing sequences $a_0,b_0,\dots a_k,b_k \in E$, if 
$\sum_i |b_i-a_i| < \delta$, then $\sum_i |F(b_i) - F(a_i)|<\eps$ 
(respectively $\sum_i \omega(F,a_i,b_i) < \eps$).
\end{enumerate}
\end{definition}

Observe that if $E$ is an interval, then being $VB$ on $E$ is the 
same thing as being $VB_\ast$ on $E$, and similarly for absolute 
continuity.  We can also understand what it means for 
a function to satisfy these properties on a closed set $E$ 
with reference to simplified 
functions $F_E$ and $F_{E,\ast}$ defined as follows.

\begin{definition}
Let $F\in C(I)$, and $E\subseteq I$ a closed set.  Then let $F_{E}$ 
and $F_{E,\ast}$ denote the functions satisfying
\begin{enumerate}
\item $F_{E}(x) = F_{E,\ast}(x) = F(x)$ for $x\in E$, and 
\item If $(c,d)$ is a connected component of $I\setminus E$, 
\begin{enumerate}
\item let $F_E$ be linear on $[c,d]$, and 
\item let 
\begin{align*}F_{E,\ast}\left(\frac{2c+d}{3}\right) = \sup F([c,d]) \\
F_{E,\ast}\left(\frac{c+2d}{3}\right) = \inf F([c,d]),\end{align*} 
and let $F_{E,\ast}$ be linear 
on $[c, \frac{2c+d}{3}], [\frac{2c+d}{3}, \frac{c + 2d}{3}]$ and 
$[\frac{c+2d}{3},d]$.
\end{enumerate}
\end{enumerate}
\end{definition}
Note that $\omega(F_{E,\ast}, [c,d]) = \omega(F,[c,d])$ 
for $(c,d)$ a connected component of $I\setminus E$.

Then the following proposition holds:
\begin{prop}\label{prop:linearized}
 Let $F \in C(I)$ and $E\subseteq I$ be closed.  Then
\begin{enumerate}
\item $F$ is $VB$ (resp. $AC$) on $E$ if and only if $F_E$ is $VB$ 
(resp. $AC$) on $I$.
\item $F$ is $VB_\ast$ (resp. $AC_\ast$) on $E$ if and only if 
$F_{E,\ast}$ is $VB$ (resp. $AC$) on $I$.
\end{enumerate}
\end{prop}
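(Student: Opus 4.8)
The plan is to reduce everything to the behaviour of $F_E$ and $F_{E,\ast}$ on the components of $I\setminus E$, where $F_E$ is monotone and $F_{E,\ast}$ is monotone on each of three subintervals with the same oscillation as $F$. I would first dispatch the ``if'' directions, which are immediate: if $F_E$ (resp.\ $F_{E,\ast}$) is $VB$ or $AC$ on $I$, then specializing the defining non-decreasing sequences to points of $E$ and using that $F_E$, $F_{E,\ast}$ and $F$ agree on $E$ gives that $F$ is $VB$ or $AC$ (resp.\ $VB_\ast$ or $AC_\ast$) on $E$. For $F_{E,\ast}$ one extra routine step is needed: given an interval $(a_i,b_i)$ with endpoints in $E$, pick $x_i<y_i$ in $[a_i,b_i]$ with $|F_{E,\ast}(y_i)-F_{E,\ast}(x_i)|$ within $\varepsilon 2^{-i}$ of $\omega(F_{E,\ast},a_i,b_i)=\omega(F,a_i,b_i)$, which converts an oscillation sum into a variation sum for $F_{E,\ast}$ over a packing of no larger total length.

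For the ``only if'' direction in the $VB$ cases I would estimate the total variation of $F_E$ (resp.\ $F_{E,\ast}$) on $I$ directly. Given a finite partition of $I$, refine it by adjoining the endpoints, and for $F_{E,\ast}$ also the two distinguished points $\tfrac{2c+d}{3},\tfrac{c+2d}{3}$, of every component $(c,d)$ of $I\setminus E$ that meets the partition (the one or two one-sided components at the ends of $I$ are a routine separate case). Now delete every partition point lying strictly inside one of the monotone pieces; by monotonicity this leaves the variation sum unchanged. What remains is a variation sum indexed by points of $E$, plus (in the $F_{E,\ast}$ case) at most $3\,\omega(F,c,d)$ per poked component $(c,d)$, using $F(c),F(d)\in[\inf F([c,d]),\sup F([c,d])]$. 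Since the poked components themselves form a non-decreasing sequence in $E$, both quantities are bounded by the constant witnessing that $F$ is $VB$ (resp.\ $VB_\ast$) on $E$, so $F_E$ (resp.\ $F_{E,\ast}$) has finite variation on $I$.

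For the ``only if'' direction in the $AC$ cases I would invoke the Banach--Zarecki theorem: a continuous function on $I$ is absolutely continuous if and only if it has bounded variation and satisfies Luzin's condition (N). If $F$ is $AC$ (resp.\ $AC_\ast$) on $E$ it is in particular $VB$ (resp.\ $VB_\ast$) on $E$, so by the previous paragraph $F_E$ (resp.\ $F_{E,\ast}$) has bounded variation on $I$; it is certainly continuous; and it satisfies (N), because it is piecewise linear on each component of $I\setminus E$, hence maps null subsets of those components to null sets, while on $E$ it equals $F$, and $AC$ (resp.\ $AC_\ast$) on $E$ forces $F$ to map $E$-null sets to null sets (cover such a set by finitely many short intervals with endpoints in $E$; small total length forces small total variation, resp.\ oscillation, hence small outer measure of the image). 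Banach--Zarecki then yields that $F_E$ (resp.\ $F_{E,\ast}$) is $AC$ on $I$.

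I expect the main friction to be in the $VB_\ast$ and $AC_\ast$ cases, where one must make sure the three monotone pieces of $F_{E,\ast}$ on a component are charged against $\omega(F,c,d)$ without compounding; this is exactly what the identity $\omega(F_{E,\ast},c,d)=\omega(F,c,d)$ noted just before the proposition is for. If one would rather not cite Banach--Zarecki, the $AC$ cases can instead be handled by an $\varepsilon$--$\delta$ argument parallel to the $VB$ one; the obstacle there is that a packing of $I$ of total length below $\delta$ can still poke slightly into components whose oscillation is not small, and bounding the total of these ``poke'' contributions needs the summability of $\sum_{(c,d)}|F(d)-F(c)|$ (resp.\ $\sum_{(c,d)}\omega(F,c,d)$), which follows from $F$ being $VB$ (resp.\ $VB_\ast$) on $E$: all but finitely many terms are then negligible, and the finitely many remaining components have a positive minimum length, so their poke contributions are $O(\delta)$.
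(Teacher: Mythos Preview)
The paper actually states this proposition without proof; it is treated as a routine consequence of the definitions. So there is nothing in the paper to compare against, and your task is simply to give a correct argument, which you have.

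Your outline is sound. A few remarks on points where a reader might pause:

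\begin{itemize}
\item In the ``if'' direction for the starred case you invoke $\omega(F_{E,\ast},a_i,b_i)=\omega(F,a_i,b_i)$ for $a_i,b_i\in E$. The paper only records this identity for components $(c,d)$ of $I\setminus E$, but it does hold for arbitrary $a_i,b_i\in E$: on $E\cap[a_i,b_i]$ the two functions agree, and on each component $(c,d)\subseteq[a_i,b_i]$ both $F$ and $F_{E,\ast}$ have range exactly $[\inf F([c,d]),\sup F([c,d])]$. It is worth one line to say this.
\item Your Banach--Zarecki route for the $AC$/$AC_\ast$ ``only if'' directions is clean and legitimate. The continuity of $F_E$ and $F_{E,\ast}$ (needed for Banach--Zarecki) follows from the continuity of $F$: on small components near a point of $E$ the modified function is trapped in $[\inf F([c,d]),\sup F([c,d])]$, which shrinks to a point. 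The $(N)$ verification on $E$ goes exactly as you sketch.
\item The alternative direct $\varepsilon$--$\delta$ argument you outline at the end is also correct, and your identification of the one genuine issue (packings that ``poke'' into finitely many components whose oscillation is not small, handled by the summability of $\sum_{(c,d)}\omega(F,c,d)$) is exactly right.
\end{itemize}

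In short: your proof is correct and more detailed than anything the paper provides for this proposition.
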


Now we can define the main notions.

\begin{definition}
A function $F\in C(I)$ is $VBG$ (respectively $ACG, VBG_\ast, ACG_\ast$) 
if there is a 
countable sequence of closed sets $E_n$ such that $\cup_n E_n = I$ 
and $F$ is $VB$ (respectively $AC, VB_\ast, AC_\ast$) on each $E_n$.
\end{definition}

It is immediate that $VBG_\ast \subseteq VBG$ and $ACG_\ast \subseteq ACG$.
Recall the relationship between absolute continuity and bounded variation:
a continuous function of bounded variation is absolutely continuous if and
only if it satisfies Lusin's condition $(N)$.

\begin{definition}
A function $F:I\rightarrow \mathbb R$ satisfies $(N)$ if for every Lebesgue 
null set $A\subseteq I$, its image $F(A)$ is also null.
\end{definition}

If $\cup_n E_n = I$, then $F$ satisfies $(N)$ if and only if it 
satisfies $(N)$ on each $E_n$.  Therefore, $F \in ACG$ if and only if 
$F \in VBG$ and $F$ satisfies $(N)$.  We also have (see \cite[Thm VII.8.8, 
pg. 233]{saks}) that $ACG_\ast = VBG_\ast \cap ACG$.  It follows that 
$F \in ACG_\ast$ if and only if $F \in VBG_\ast$ and $F$ satisfies $(N)$.

Based on the definitions above, it would seem that these sets are 
$\Sigma^1_2$.  However, the following equivalent characterization
shows each of these classes is in fact $\Pi^1_1$.

\begin{theorem}[see {\cite[Thm VII.9.1, pg 233]{saks}}]\label{thm:7}
For a function $F$ to be $VBG$ (respectively $VBG_\ast,ACG, ACG_\ast$) 
it is necessary and sufficient that for every closed $E \subseteq I$, 
there is an interval $[a,b]\subseteq I$ such that 
$(a,b) \cap E \neq \emptyset$ 
and $F$ is $VB$ 
(respectively $VB_\ast, AC, AC_\ast$) on $[a,b] \cap E$.
\end{theorem}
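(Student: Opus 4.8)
The plan is to prove the equivalence by establishing the contrapositive in the harder direction and a direct set-theoretic argument in the easier direction, with the Baire category theorem doing the main work. First I would dispose of the easy direction: if $F$ is $VBG$ (say), witnessed by closed sets $E_n$ with $\bigcup_n E_n = I$, then given any closed $E \subseteq I$, the sets $E \cap E_n$ are closed, cover $E$, and $F$ is $VB$ on each $E \cap E_n$. Since $E$ (as a closed subset of the complete metric space $I$) is itself a Baire space, some $E \cap E_n$ is nonmeager in $E$, hence its closure in $E$ contains a relatively open set, i.e.\ there is an interval $[a,b]$ with $(a,b) \cap E \neq \emptyset$ and $\overline{(a,b) \cap E} \subseteq E_n$ (after possibly shrinking). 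Then $F$ is $VB$ on $(a,b) \cap E$ because it is $VB$ on the larger set $E_n$; the monotonicity of the $VB$, $VB_\ast$, $AC$, $AC_\ast$ conditions under passing to subsets makes this uniform across all four cases.

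For the converse I would argue by transfinite construction of the covering sequence, which is really where the content lies. Suppose that for every closed $E$ there is an interval $[a,b]$ with $(a,b)\cap E \neq \emptyset$ on which $F$ is $VB$. I would build a decreasing transfinite sequence of closed sets $P_\xi$ by a Cantor–Bendixson-style derivative: set $P_0 = I$, take intersections at limits, and at successor stages let $P_{\xi+1} = P_\xi \setminus \bigcup \{(a,b) : F \text{ is } VB \text{ on } (a,b)\cap P_\xi\}$. The hypothesis, applied to $E = P_\xi$, guarantees that whenever $P_\xi \neq \emptyset$ we remove a nonempty relatively open piece, so $P_\xi \supsetneq P_{\xi+1}$; by a standard ordinal/countable-basis argument the process terminates at some countable stage $\xi_0$ with $P_{\xi_0} = \emptyset$. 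The closed sets $\overline{(a,b)\cap P_\xi}$ arising across all stages $\xi < \xi_0$ and all the countably many intervals used form a countable family covering $I$, and $F$ is $VB$ on each of them (again using monotonicity to pass from $(a,b)\cap P_\xi$ to its closure, which requires a short argument that the $VB$ bound is preserved under closure — this is where continuity of $F$ enters).

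The main obstacle I anticipate is the closure step: showing that if $F$ is $VB$ (resp.\ $VB_\ast$, $AC$, $AC_\ast$) on a set $S$ then it is $VB$ (resp.\ \dots) on $\overline S$. For the non-asterisk versions this follows quickly from continuity of $F$, since the defining inequalities involve only finitely many points and $F(\overline S) \subseteq \overline{F(S)}$. For the asterisk versions one must also control oscillation $\omega(F,a_i,b_i)$ over intervals with endpoints approaching $S$; here the linearized functions $F_{E,\ast}$ of Proposition \ref{prop:linearized} are the right device, since oscillation on a component of the complement is unchanged, and one reduces to the behavior of an honest $VB$ function on $I$. A secondary point to handle carefully is the bookkeeping that only countably many intervals are used in total across all transfinite stages — this follows because at each stage the removed open set is a countable union of rational-endpoint intervals and there are only countably many stages. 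Once these are in place, the theorem follows; I would remark that the proof is uniform in the four cases because each defining condition is monotone under restriction and stable under closure by the arguments above, so no case analysis beyond the asterisk/non-asterisk split in the closure lemma is needed.
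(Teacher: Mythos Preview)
The paper does not give its own proof of this theorem: it is stated with a citation to Saks \cite[Thm~VII.9.1]{saks} and no argument follows. So there is nothing in the paper to compare your proposal against directly.

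That said, your plan is the standard one and is correct. Two remarks. First, the transfinite derivation you describe for the sufficiency direction is exactly the process the paper introduces in the definition immediately after Theorem~\ref{thm:7}; the paragraph following that definition sketches precisely your argument (if $F$ is in the class, the derivation must strictly decrease until it reaches $\emptyset$, and the removed pieces $[a,b]\cap P^\alpha$ furnish the countable cover). So while the paper does not prove the theorem, it does record the content of your sufficiency argument in passing. Second, your ``closure step'' worry is smaller than you make it sound: since $P_\xi$ is closed, $(a,b)\cap P_\xi$ is already relatively closed in $(a,b)$, so its closure in $I$ differs from it by at most the two points $a,b$. Adding finitely many points to a set on which $F$ is $VB$, $VB_\ast$, $AC$, or $AC_\ast$ preserves the property by continuity of $F$ (for the asterisk versions, the extra endpoint contributes at most one additional oscillation term, which is finite). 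You do not need the full machinery of Proposition~\ref{prop:linearized} here.
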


\begin{cor}
The sets $VBG, VBG_\ast, ACG, ACG_\ast \subseteq C(I)$ are all 
$\Pi^1_1$.
\end{cor}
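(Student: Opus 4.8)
The plan is to show that each of the four classes satisfies a $\Pi^1_1$ definition by unwinding the characterization of Theorem \ref{thm:7}. Fix one of the classes, say $VBG$ (the others are identical with $VB$ replaced by $VB_\ast$, $AC$, or $AC_\ast$). The key observation is that the condition ``$F$ is $VB$ on $[a,b]\cap E$'' is arithmetic (indeed Borel) in the pair $(F,E)$, because it asks for the existence of a natural number $N$ bounding all finite sums $\sum_i |F(b_i)-F(a_i)|$ over non-decreasing tuples from $[a,b]\cap E$; quantifying over rational $a,b$ and over rational-valued tuples suffices by continuity of $F$, so this is $\Sigma^0_3$ or so in $(F,E)$. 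Theorem \ref{thm:7} then says that $F\in VBG$ if and only if for every closed $E\subseteq I$ there exist rationals $a<b$ with $(a,b)\cap E\neq\emptyset$ and $F$ is $VB$ on $[a,b]\cap E$. The outermost quantifier ranges over closed subsets of $I$, which is a Polish space, so naively this is $\Pi^1_1$; the only thing to check is that the matrix is Borel uniformly in a code for $E$.

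First I would fix an effective presentation of the hyperspace $K(I)$ of closed subsets of $I$, or equivalently work with codes for closed sets as complements of effective unions of basic open intervals (so a closed set is coded by an element of $\omega^\omega$ or $2^\omega$). Second, I would verify that the relation ``$x\in E$'' and hence ``$(a,b)\cap E\neq\emptyset$'' is arithmetic in (the code for) $E$ together with $a,b$; checking nonemptiness of $(a,b)\cap E$ amounts to asking whether the open interval $(a,b)$ is not contained in the open set $I\setminus E$, which is a $\Pi^0_2$ or $\Sigma^0_2$ condition in the code, depending on conventions. Third, I would combine this with the arithmetic complexity of ``$F$ is $VB$ on $[a,b]\cap E$'' noted above: existentially quantifying over the two rationals $a,b$ keeps the matrix arithmetic in $(F, \text{code for }E)$. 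Finally, prefixing the universal quantifier over all (codes for) closed sets $E$ — equivalently, a universal quantifier over $2^\omega$ ranging over the codes — yields a $\Pi^1_1$ definition of $VBG$, and likewise for $VBG_\ast$, $ACG$, $ACG_\ast$ using the $AC$-versions, whose defining condition (for all $\eps$ there is $\delta$ \dots) is also arithmetic in $(F,E)$.

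The main obstacle — really the only nonroutine point — is making the outer ``for every closed $E$'' quantifier legitimately a single number quantifier of the right form, i.e. confirming that it ranges over a set whose membership relation is Borel and that the matrix is Borel uniformly in the code. This is handled by the standard fact that $K(I)$ (with the Vietoris/Hausdorff topology) is an effective Polish space and that the membership relation, and the relation ``basic open set meets $E$,'' are Borel; once that is in place the complexity bookkeeping in the previous paragraph goes through mechanically. I should also note in passing that a matching $\Sigma^1_2$ upper bound is immediate from the original definitions (existential quantifier over the sequence $(E_n)$), so the content of the corollary is entirely the improvement to $\Pi^1_1$ afforded by Theorem \ref{thm:7}; since all four sets will later be shown $\Pi^1_1$-complete in Theorem \ref{thm:2}, no sharper bound is possible.
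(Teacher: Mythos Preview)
Your approach is correct and is exactly the intended unwinding of Theorem~\ref{thm:7}; the paper states the corollary without proof, so your elaboration is the standard one. One small imprecision worth fixing: the tuples $a_i,b_i$ in the definition of $VB$ on $[a,b]\cap E$ must lie in $E$, which need not contain any rationals, so ``rational-valued tuples'' is not quite right. The clean fix is either to quantify over a countable dense subset of $E$ computable from the code for $E$ (continuity of $F$ then gives the variation on all of $E$), or---using Proposition~\ref{prop:linearized}---to replace ``$F$ is $VB$ on $[a,b]\cap E$'' by ``$F_E$ is $VB$ on $[a,b]$'' and then legitimately use rational tuples in $[a,b]$; either way the matrix remains arithmetic in $(F,\text{code for }E)$ and the rest of your argument goes through unchanged.
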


The previous theorem also suggests a derivation process.
\begin{definition}
Let $X$ stand for $VB, VB_\ast, AC$ 
or $AC_\ast$.  Given $F \in C(I)$, define $P^0_{F,X} = I$. 
Define 
$$P^{\alpha + 1}_{F,X} = P^\alpha_{F,X} \setminus \cup \{(a,b) : F \text{ is 
$X$ on } [a,b]\cap P^\alpha_{F,X}\}$$
and for a limit ordinal $\lambda$, define 
$P^\lambda_{F,X} = \cap_{\alpha< \lambda} P^\alpha_{F,X}$.
Define a rank $|\cdot|_X$ by letting $|F|_X$ be the least 
$\alpha$ such that $P^\alpha_{F,X} = \emptyset$, if such $\alpha$ 
exists.
\end{definition}

If $F \in VBG$, then the only way for $P^{\alpha+1}_{F,VB} = P^\alpha_{F,VB}$ is 
if $P=\emptyset$; and furthermore, the countable 
sequence of sets $[a,b]\cap P^\alpha_{F,VB}$ for which $(a,b)$ 
were removed over the course of the derivations would serve as 
the sequence $E_n$ required in the definition of $VBG$.
Reasoning similarly about all four hierarchies,
it is immediate that a given $F \in C(I)$ belongs to one of the 
classes $VBG, VBG_\ast, ACG, ACG_\ast$ if and only if the associated 
derivation process eventually produces the empty set.  

\begin{definition}
Let $VBG_\alpha$ (respectively $VBG_{\ast\alpha}, ACG_\alpha, ACG_{\ast\alpha}$) 
denote the sets $\{F \in C(I): |F|_{VB} \leq \alpha\}$ (respectively 
$|F|_{VB_\ast}, |F|_{AC}, |F|_{AC_\ast}$).
\end{definition}

Recall from the introduction that $ACG_\ast$ is exactly the set of 
functions $F \in C(I)$ which can be recovered from $F'$ by 
Denjoy totalization. 
We will not give the
definition of the Denjoy totalization process, 
\august{because we almost always prefer to work with
the related derivation 
processes. The one place where familiarity with 
the totalization process is needed
is in the proof of Theorem} \ref{ajtai}.  \august{A definition 
can be found in} \cite[Section VIII.5]{saks}. 
What matters for us is that Denjoy totalization is a transfinite 
procedure which terminates at some countable ordinal stage, and 
$ACG_{\ast\alpha}$ consists of precisely the functions $F \in ACG_\ast$ 
which are recovered from $F'$ in at most $\alpha$ steps of Denjoy 
totalization.  Therefore, the sets $ACG_{\ast\alpha}$ have a meaningful 
interpretation in terms of Denjoy totalization.

Note that there are actually two transfinite procedures which 
are sometimes called ``Denjoy totalization'': the narrow Denjoy 
integral, which coincides with the integrals of Perron, Kurzweil 
and Henstock, and the wide Denjoy integral, sometimes known as 
the Denjoy-Khintchine integral.  In this paper, ``Denjoy totalization'' 
always refers to the narrow Denjoy integral.  The narrow Denjoy 
integral has the same relationship to the class $ACG_\ast$ as 
the wide Denjoy integral has to the class $ACG$.

\section{Cantor-Bendixson rank}\label{sec:cantorBendixson}

In this section we analyze the initial segments of the Cantor-Bendixson 
hierarchy.  The theorems of this section are not used in later sections.
Let $S \subseteq 2^{<\omega}$ be a tree with no dead ends.  
Let $[S]$ denote the set of paths in $S$. 
The Cantor-Bendixson derivative $D(S)$ is defined as the tree without dead 
ends  such that $[D(S)]$ consists of exactly the paths not isolated in $[S]$.  
Define $D^0(S) = S$, $D^{\alpha+1}(S) = D(D^\alpha(S))$, and 
$D^\lambda(S) = \cap_{\alpha<\lambda} D^\alpha(S)$ for $\lambda$ a limit.
\begin{definition}
The Cantor-Bendixson rank of a tree $S\subseteq 2^\omega$, denoted $|S|_{CB}$, is the least $\alpha$ such that $D^\alpha(S) = \emptyset$, if such exists.  Otherwise we say $|S|_{CB} = \infty$.
\end{definition}
Other authors define $|T|_{CB}$ to be the least $\alpha$ such that 
$D^\alpha(S) = D^{\alpha+1}(S)$, so that a set with a perfect subset also has a 
Cantor-Bendixson rank.  Others define their rank to be always one less than ours, 
so that every ordinal is used.

\begin{prop}
Let $Y \in 2^\omega$ and $\alpha < \omega_1^Y$.  Then $\{S : |S|_{CB} \leq \alpha\}$ 
is $\Sigma^0_{2\alpha}(Y)$ in $\{S : S \text{ is a tree with no dead ends}\}$.
\end{prop}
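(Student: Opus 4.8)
The plan is to carry out an effective transfinite recursion showing that, uniformly in $\sigma\in 2^{<\omega}$ and in a notation $a$ for $\alpha$, the relation ``$\sigma\in D^\alpha(S)$'' is $\Pi^0_{2\alpha}(Y)$ when $\alpha\ge 1$ (and clopen when $\alpha=0$). Granting this, $|S|_{CB}\le\alpha$ is equivalent to $D^\alpha(S)=\emptyset$, hence to $\langle\rangle\notin D^\alpha(S)$ since $D^\alpha(S)$ is a tree, so $\{S:|S|_{CB}\le\alpha\}$ is the complement of a $\Pi^0_{2\alpha}(Y)$ relation and therefore $\Sigma^0_{2\alpha}(Y)$. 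The no-dead-ends hypothesis on $S$ enters only in the successor step below, which is why the statement is phrased relative to the subspace of trees without dead ends; one should also check, by compactness, that $D^\alpha(S)$ again has no dead ends at every $\alpha$, including limits: if $\sigma$ lies in $D^\beta(S)$ for all $\beta<\lambda$ then the sets $[\sigma]\cap[D^\beta(S)]$ are nonempty, compact, and decreasing, so $[\sigma]\cap[D^\lambda(S)]\neq\emptyset$.

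For the recursion itself: at $\alpha=0$, ``$\sigma\in D^0(S)=S$'' is decidable from $S$. At a successor $\alpha=\beta+1$, put $R=D^\beta(S)$; since $R$ has no dead ends, $\sigma\in D(R)=D^\alpha(S)$ iff $[R_\sigma]$ is infinite, and by K\"onig's lemma this holds iff for every $k$ there is a level $\ell$ at which at least $k$ nodes of $R$ extend $\sigma$. The inner clause ``at least $k$ of the finitely many length-$\ell$ extensions of $\sigma$ lie in $D^\beta(S)$'' is a finite disjunction of finite conjunctions of the relations ``$\tau\in D^\beta(S)$'', hence $\Pi^0_{2\beta}(Y)$ by the inductive hypothesis and closure of $\Pi^0_{2\beta}(Y)$ under finite unions and intersections; prefixing $\exists\ell$ gives $\Sigma^0_{2\beta+1}(Y)$, and then $\forall k$ gives $\Pi^0_{2\beta+2}(Y)=\Pi^0_{2\alpha}(Y)$. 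At a limit $\lambda$, fix a notation $a$ for $\lambda$ with associated notations $a_n$ satisfying $\lambda=\sup_n|a_n|_\kO$, so that (as $D^\beta(S)$ is $\subseteq$-decreasing in $\beta$) $D^\lambda(S)=\bigcap_n D^{|a_n|_\kO}(S)$; then ``$\sigma\in D^\lambda(S)$'' is the countable conjunction of the relations ``$\sigma\in D^{|a_n|_\kO}(S)$'', which are $\Pi^0_{2|a_n|_\kO}(Y)$ with $2|a_n|_\kO<2\lambda$, and a countable intersection of sets drawn from $\bigcup_{\gamma<2\lambda}\Sigma^0_\gamma(Y)$ is by definition $\Pi^0_{2\lambda}(Y)$. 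All this is arranged uniformly by effective transfinite recursion, producing from $a$ an index for the $\Pi^0_{2\alpha}(Y)$ definition of ``$\sigma\in D^\alpha(S)$''.

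The conceptual heart is the successor-step equivalence of ``$\sigma$ extends to a non-isolated path of $[D^\beta(S)]$'' with the arithmetical ``$\forall k\,\exists\ell$ ($\ge k$ length-$\ell$ extensions of $\sigma$ in $D^\beta(S)$)'': this is exactly where the absence of dead ends is indispensable (for a tree with dead ends, unbounded level-width need not yield infinitely many paths, and the natural reformulations of ``$\sigma\in D(R)$'' become $\Sigma^1_1$ rather than arithmetical of bounded level), and it is this $\forall\exists$ of two arithmetical quantifiers per derivative that produces the factor $2$. The points needing actual care are rather the bookkeeping that $D^\alpha(S)$ stays without dead ends at limits and the uniformity of the effective transfinite recursion in the notation $a$; these are standard, and the whole argument runs parallel to the analysis of the limsup rank recalled in Theorem~\ref{ladrthm}, of which it is an instance.
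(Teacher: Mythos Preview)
Your proof is correct and follows essentially the same route as the paper: an effective transfinite recursion where the successor step counts nodes at each level and appeals to K\"onig's lemma, and the limit step uses compactness. The only cosmetic difference is that the paper tracks the $\Sigma^0_{2\alpha}(Y)$ predicate ``$D^\alpha(S_\sigma)=\emptyset$'' directly (with quantifiers $\exists k\,\forall n$ ``at most $k$ nodes''), whereas you track the dual $\Pi^0_{2\alpha}(Y)$ predicate ``$\sigma\in D^\alpha(S)$'' (with $\forall k\,\exists\ell$ ``at least $k$ nodes'') and complement at the end; you are also more explicit than the paper about verifying that $D^\alpha(S)$ remains without dead ends at limit stages.
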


\begin{proof}
The proof is by effective transfinite recursion.  
Because checking whether a no-dead-ends
tree is empty can be accomplished by checking the root, 
$\{ S : |S|_{CB} = 0\}$ is $\Sigma^0_0$.

A tree $S$ has $D^{\alpha+1}(S) = \emptyset$ if and only if 
$D^\alpha(S)$ has only finitely many branches.
If $D^\alpha(T)$ has at least $k$ branches, then by going up to a height $n$ at which the branches have separated, we may find at least $k$-many $\sigma$ of length $n$ such that $D^\alpha(T_\sigma) \neq \emptyset$.  And if there are $k$ incomparable $\sigma$ such that $D^\alpha(T_\sigma)\neq \emptyset$, then $D^\alpha(T)$ has at least $k$ branches.  Thus
$$D^{\alpha+1}(S) = \emptyset \iff \exists k \forall n (\text{there are at most $k$ many $\sigma$ of length $n$ for which } D^\alpha(S_\sigma) \neq \emptyset).$$    
Assuming $\{S : D^\alpha(S)=\emptyset\}$ is $\Sigma^0_{2\alpha}(Y)$ uniformly in $\alpha$, 
this shows that $D^{\alpha+1}(S)=\emptyset$ is $\Sigma^0_{2\alpha+2}(Y)$ uniformly
in $\alpha$.

If $\lambda$ is a limit, a tree has $D^{\lambda}(S) = \emptyset$ if and only if there is an $\alpha<\lambda$ such that $D^\alpha(S) = \emptyset$, by compactness.  Assuming $D^{\alpha}(S) = \emptyset$ is $\Sigma^0_{2\alpha}(Y)$ 
uniformly in $\alpha < \lambda$, and supposing a sequence
$\alpha_n\rightarrow \lambda$ is $Y$-effectively given, 
we have $D^\lambda(S) = \emptyset$ if and only if $\exists \alpha < \lambda[D^\alpha(S)=\emptyset]$, a $\Sigma^0_\lambda(Y)$ statement.  Note $\Sigma^0_\lambda(Y) =\Sigma^0_{2\lambda}(Y)$ for $\lambda$ a limit.
\end{proof}

\begin{prop}
There is a computable reduction $T\mapsto S_T$ from trees 
$T \subseteq \omega^{<\omega}$ to no-dead-end trees $S\subseteq 2^{<\omega}$, 
satisfying
\begin{enumerate}
\item If $T$ is not well-founded, $[S_T]$ contains a perfect set.
\item If $T$ is well-founded with $|T|_{ls} = \alpha$, then $[S_T]$ is countable, 
and $|S_T|_{CB} = \alpha$.
\end{enumerate}
\end{prop}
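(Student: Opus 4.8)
The plan is to build $S_T \subseteq 2^{<\omega}$ so that its Cantor--Bendixson derivative mimics the recursion defining $|T|_{ls}$. The key design principle is to recall why the limsup rank was introduced: in the recursion $|T|_{ls} = \max(\sup_n |T_n|_{ls}, (\limsup_n |T_n|_{ls}) + 1)$, the term $\sup_n$ records subtrees that must be ``absorbed'' without cost, while the $\limsup_n + 1$ term records that infinitely many children of comparable rank force one more derivative. This is exactly the behavior of $D$ on $2^{<\omega}$: if a node $\tau$ has infinitely many sons each carrying a subtree of CB-rank $\geq \beta$ in $[S]$, then $\tau$ itself survives to $D^\beta$, i.e.\ its rank is $\geq \beta + 1$ there; whereas finitely many high-rank sons just pass the rank up. So I would arrange $S_T$ so that the branch structure at each node of $T$ produces exactly the right number (finite vs.\ infinite) of ``surviving children.''

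Concretely, I would define $S_T$ by recursion on $T$, attaching to each $\sigma \in T$ a gadget in $2^{<\omega}$. For the children $\sigma^\smallfrown n$ of $\sigma$, I would first recursively form the trees $S_{T_{\sigma^\smallfrown n}}$ (shifted into $2^{<\omega}$), and then graft them onto a spine so that the recursively-built pieces hang off at a sequence of nodes along a path; crucially I would add, at each grafting point, an isolated path (e.g.\ a fixed tail like $10^\omega$) so that the grafted piece of rank $\beta_n$ contributes a node of CB-rank $\beta_n$, and these accumulate: the spine node sees the sequence $\beta_n = |T_n|_{ls}$ as CB-ranks of incomparable nodes above it. Then the CB-rank at the spine node is $\max(\sup_n \beta_n, (\limsup_n \beta_n)+1)$ by the standard computation of CB-rank from the ranks of incomparable extensions --- matching the $|\cdot|_{ls}$ recursion exactly. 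I would also make sure every node of $S_T$ has an extension that is either a leaf-free infinite isolated branch (handled by appending $0^\omega$-type tails), so $S_T$ has no dead ends, and I would verify that when $T = \emptyset$ the construction gives $[S_T] = \emptyset$, rank $0$, and that the whole map $T \mapsto S_T$ is computable (each finite level of $S_T$ depends on only finitely much of $T$).

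For part (1): if $T$ is not well-founded, fix an infinite path $X \in [T]$. Along $X$, at each stage the node $X\uhr m$ has a child $X\uhr(m+1)$ in $T$, and that child is one of the grafting points carrying a further infinite sub-branch; by making at least two of the grafted gadgets at each node always contain a path (e.g.\ the isolated $10^\omega$ tails are always present, at infinitely many grafting points), I get at each spine node of the image of $X$ a binary-branching choice leading to another such node, producing a perfect set inside $[S_T]$. For part (2): an induction on $|T|_{ls}$ using the CB-rank computation above gives $|S_T|_{CB} = |T|_{ls}$ and that $[S_T]$ is countable (a countable union along $T$, which is well-founded, of the countable sets $[S_{T_\sigma}]$ plus countably many isolated tails). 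The main obstacle I expect is bookkeeping: the definitions of CB-rank in the paper make $D^\alpha(S) = \emptyset$ mean $|S|_{CB} \leq \alpha$, and I must ensure the grafting contributes ranks additively with no off-by-one error --- in particular that an isolated path contributes a node of CB-rank exactly $1$ and that grafting preserves the rank of the subtree without shifting it --- so the inductive step reproduces $\max(\sup, \limsup + 1)$ rather than, say, $\max(\sup, \limsup + 1) + 1$ or $\sup + 1$. Getting the gadget to realize precisely $\sup_n \beta_n$ when only finitely many $\beta_n$ tie for the max, versus $\limsup + 1$ when infinitely many do, is the delicate point, but it is precisely the phenomenon that $|\cdot|_{ls}$ was built to capture, so the construction should be forced on us once the gadgets are chosen correctly.
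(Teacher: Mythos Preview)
Your recursive picture for part (2) is right and matches the paper's approach: hang $S_{T_n}$ off the $n$th position of a spine accumulating at $0^\omega$, and the CB-rank of the result is exactly $\max(\sup_n |S_{T_n}|_{CB},\ (\limsup_n |S_{T_n}|_{CB})+1)$, which reproduces the limsup recursion. The paper does this non-recursively with the single formula
\[
S_T = \{0^{2n_0+i_0}10^{2n_1+i_1}1\cdots0^{2n_k+i_k}10^m : (n_0,\dots,n_k)\in T,\ i_j\in\{0,1\}\},
\]
but that is just your construction unwound.

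The gap is in part (1). Your mechanism for producing a perfect set---``the isolated $10^\omega$ tails are always present, at infinitely many grafting points''---does not work. If $[T]$ consists of a single path $X$, then at each level of your construction only \emph{one} grafting point (the one for $X(m)$) leads to a further infinite sub-structure; the spine and the isolated tails you add are each a single path that does not branch again. Tracing through, the paths of $[S_T]$ arising from $X$ are indexed by ``how many times do I enter a grafted piece before staying on a spine forever,'' which is countable, not perfect. The paper flags exactly this: the naive encoding $S_T' = \{0^{n_0}1\cdots0^{n_k}10^m : (n_0,\dots,n_k)\in T\}$ satisfies (2) but fails (1) when $[T]$ is a single path. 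The fix is not to add isolated tails but to graft \emph{two copies} of each $S_{T_n}$ (the $i_j\in\{0,1\}$ in the formula above), so that every step along the image of $X$ offers two choices each of which again leads to two choices, giving $\{0^{2X(0)+Y(0)}10^{2X(1)+Y(1)}1\cdots : Y\in 2^\omega\}$ as a perfect subset.

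A secondary caution: if you literally add an isolated tail at \emph{every} $n$ (not just those with $n$ a child in $T$), you plant infinitely many rank-$1$ pieces along the spine, forcing CB-rank $\geq 2$ even when $T$ is a single root with $|T|_{ls}=1$. Add structure only at children that are actually present.
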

\begin{proof}
The idea is that each node of a tree $T \subseteq \Nat^{<\Nat}$ 
should correspond to a branching of paths in $S_T\subseteq 2^{<\omega}$, 
with the topological clustering of the paths provided by the hierarchical structure of $T$.

Define $S_T$ by
$$S_T = \{0^{2n_0+i_0}10^{2n_1+i_1}1\cdots0^{2n_k+i_k}10^m : 
(n_0,\dots,n_k)\in T, i_0,\dots i_k \in \{0,1\}\}.
\footnote{The more familiar option, 
$S_T' = \{0^{n_0}10^{n_1}1\cdots0^{n_k}10^m : (n_0,\dots,n_k)\in T\}$, satisfies 
part (2) but not part (1) of the theorem; consider the case when $[T]$ consists 
of a single path.}$$
Note that $0^m \in S_T$ if and only if the empty node is in $T$.
If $T\not\in WF$, then if $P \in [T]$ the following subset of 
$[S_T]$ is perfect:
$$\{0^{2P(0) + X(0)}10^{2P(1) + X(1)}1\dots : X \in 2^\omega\}.$$

Supposing now that $T$ is well-founded, 
we claim that $|T|_{ls} = |S_T|_{CB}$.  
The proof is by induction on the usual rank of $T$.
If $T = \emptyset$ then also $S_T = \emptyset$, so $|S_T|_{CB} =  |T|_{ls} = 0$.

Suppose $|T|_{ls} = \alpha+1$.  (This is the only case because the limsup rank is 
always a successor.)  Then there is an $N$ such that for $n\geq N$, 
$|T_n|_{ls} \leq \alpha$.  
Observe that if $C\subseteq 2^{<\omega}$ is any finite prefix-free 
collection of strings such that 
$\cup_{\sigma\in C} [\sigma]$ covers $[S]$, 
then $$D^\alpha(S) = \bigcup_{\sigma\in C} \sigma^\smallfrown D^\alpha(S_\sigma).$$ 
Letting 
$C = \{0^{2N}\} \cup \{0^{2n+i}1 : n < N, i\in \{0,1\}\}$,
we have 
$$D^{\alpha+1}(S_T) = \bigcup_{\sigma\in C} 
\sigma^\smallfrown D^{\alpha+1}((S_T)_\sigma).$$
By induction, for $n< N$ and $i \in \{0,1\}$,  
$|0^{2n+i}1^\smallfrown S_{T_n}|_{CB}\leq \alpha + 1$, so
$$D^{\alpha+1}(S_T) = {0^{2N}}^\smallfrown D^{\alpha+1}((S_T)_{0^{2N}}).$$
Also, for $n \geq N$, we have 
$|0^{2n+i}1^\smallfrown S_{T_n}|_{CB} \leq \alpha$, 
so for each $k$, $D^\alpha((S_T)_{0^{2N+k}1}) = \emptyset$, so 
$[D^\alpha((S_T)_{0^{2N}})] \subseteq \{0^\omega\}$.  
Therefore $D^{\alpha+1}((S_T)_{0^{2N}}) = \emptyset$, so 
$|S_T|_{CB} \leq \alpha + 1$.

Now we need $|S_T|_{CB} \geq \alpha + 1$. If $|T_n|_{ls} = \alpha + 1$ for some $n$, then by induction $|0^n1^\smallfrown S_{T_n}|_{CB}=\alpha+1$, which suffices.
If $\limsup_n |T_n|_{ls} = \alpha$, then for every $\beta < \alpha$, there are infinitely many $n$ such that $|T_n| > \beta$, so there are infinitely many $n$ for which $|0^n1^\smallfrown S_{T_n}|_{CB} > \beta$.  Therefore, for all $\beta<\alpha$, $0^\omega$ is not an isolated
path of $D^\beta(S_T)$, so $0^\omega \in [D^\alpha(S_T)]$, and $|S_T|_{CB} > \alpha$.
 \end{proof}

{We can now obtain a new proof of the results 
of Cenzer and Mauldin in \cite{CenzerMauldin1983} and 
\cite{CenzerMauldin1982}.  Though their results are stated 
topologically, their reductions are computable.}

\begin{theorem}[{\cite[Theorem 8]{CenzerMauldin1983}}]
For any nonzero $\alpha<\omega_1$, let 
$$A_\alpha = \{S \subseteq 2^\omega : S \text{ is a no dead end tree and }
|S|_{CB}\leq\alpha\}.$$  Then all the conclusions of Theorem \ref{ladredst} 
hold.  In particular, if $\alpha<\omega_1^{CK}$, 
$A_\alpha$ is $\Sigma^0_{2\alpha}$-complete.
\end{theorem}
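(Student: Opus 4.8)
The plan is to derive the theorem from the two preceding propositions of this section in precisely the way Theorem \ref{ladredst} was derived from Theorem \ref{ladrthm}. The $\Sigma^0_{2\alpha}(Y)$ clause is immediate: working inside the Polish space of no-dead-end trees, $A_\alpha$ is literally the set $\{S : |S|_{CB} \le \alpha\}$, which the first proposition of this section already shows to be $\Sigma^0_{2\alpha}(Y)$ whenever $\alpha < \omega_1^Y$; the $\mathbf\Sigma^0_{2\alpha}$ membership then follows from the identity $\mathbf\Sigma^0_\alpha = \bigcup\{\Sigma^0_\alpha(Y) : \alpha < \omega_1^Y\}$ recorded in Section \ref{sec:prelim}.

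For the hardness clause, fix an oracle $Y$ with $\alpha < \omega_1^Y$ and a set $B \in \Sigma^0_{2\alpha}(Y)$. By Theorem \ref{ladredst} there is a $Y$-computable map $X \mapsto T_X$ from $\omega^\omega$ to $Tr$ with $X \in B \iff |T_X|_{ls} \le \alpha$, where as always the right-hand side already entails that $T_X$ is well-founded. Compose this with the embedding $T \mapsto S_T$ of the second proposition and put $h(X) = S_{T_X}$; this is again $Y$-computable and always outputs a no-dead-end tree, so it maps into the ambient space. I claim $h$ is a $Y$-computable reduction from $B$ to $A_\alpha$. If $X \in B$ then $T_X$ is well-founded with $|T_X|_{ls} = \beta \le \alpha$, so clause (2) of the second proposition gives $|S_{T_X}|_{CB} = \beta \le \alpha$, i.e. $h(X) \in A_\alpha$. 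If $X \notin B$ then either $T_X$ is ill-founded, in which case clause (1) makes $[S_{T_X}]$ contain a perfect set and $|S_{T_X}|_{CB} = \infty$; or $T_X$ is well-founded with $|T_X|_{ls} = \beta > \alpha$, in which case clause (2) gives $|S_{T_X}|_{CB} = \beta > \alpha$. Either way $h(X) \notin A_\alpha$. Feeding in a computable $Y$ (possible when $\alpha < \omega_1^{CK}$) yields $\Sigma^0_{2\alpha}$-completeness, and letting $Y$ range over all oracles yields $\mathbf\Sigma^0_{2\alpha}$-hardness, hence $\mathbf\Sigma^0_{2\alpha}$-completeness.

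I do not anticipate a real obstacle, since the two propositions do all the work; the only points needing care are (i) the trichotomy in the hardness argument above, and within it the essential appeal to clause (1) of $T \mapsto S_T$ --- it is exactly the ill-founded case, where $|T_X|_{ls}$ is undefined rather than merely large, that forces the use of the doubled exponents $2n_j+i_j$ in the definition of $S_T$, as the footnote to that proposition notes --- and (ii) the boundary case $\alpha = 1$, where $A_1$ is the set of no-dead-end trees with only finitely many branches and where one should check directly (or re-examine the proof of Theorem \ref{ladredst} at that level) that the composed reduction still certifies $\Sigma^0_2$-completeness.
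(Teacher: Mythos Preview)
Your proposal is correct and matches the paper's intended argument: the paper states this theorem without proof, leaving it to follow immediately from the two preceding propositions together with Theorem \ref{ladredst}, which is exactly what you spell out. Your trichotomy in the hardness direction (well-founded with rank $\leq \alpha$; well-founded with rank $>\alpha$; ill-founded, handled by clause (1) of the reduction proposition) is the right way to verify the composition works, and your caveat about $\alpha=1$ is a fair observation given that Theorem \ref{ladrthm} itself is stated only for $\alpha>1$.
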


{Their main result, identifying the precise Borel class of the 
$\alpha$-derivative operator 
on the space of closed subsets of $2^\omega$, follows 
directly from this theorem, 
and they also prove this theorem on the way to their result. 

A similar index set result was obtained independently by Lempp.}

 \begin{cor}{\cite{Lempp}}
 For each constructive $\alpha > 1$, the sets $$\{ e : \phi_e \text{ codes a tree $S$ which has no dead ends and } |S|_{CB}\leq \alpha\}$$  are 
$\Sigma_{2\alpha}$-complete.
 \end{cor}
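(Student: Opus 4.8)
The plan is to read the corollary off the theorem immediately above, which already gives the (lightface) $\Sigma^0_{2\alpha}$-completeness of the set $A_\alpha$ of no-dead-end trees of Cantor-Bendixson rank at most $\alpha$, viewed inside the effectively presented space of all no-dead-end trees. What remains is the routine translation between presenting a tree as a point of $2^\omega$ and presenting it as an index $e$ for a (necessarily total) partial computable function: this is the same translation used to pass between Theorem~\ref{ladrthm} and Theorem~\ref{ladredst}, now run in the opposite direction.

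For hardness, I would first fix a $\Sigma^0_{2\alpha}$-complete set $K \subseteq \omega$, which exists because $\alpha$ is constructive (explicitly $K = \emptyset^{(2\alpha)}$ if $2\alpha < \omega$, and $K = H_{2^a}$ for some $a \in \kO$ with $|a|_\kO = 2\alpha$ otherwise). Setting $X_n = 0^n 1 0^\omega$ and $B = \{X_n : n \in K\}$ yields a $\Sigma^0_{2\alpha}$ subset of $2^\omega$ with $n \in K \iff X_n \in B$, and $X_n$ is uniformly computable from $n$. The theorem, applied with the oracle $Y = \emptyset$, then furnishes a computable reduction $h$ from $B$ to $A_\alpha$; in particular every $h(X_n)$ is a code for a no-dead-end tree whose Cantor-Bendixson rank is $\leq \alpha$ precisely when $n \in K$. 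Since $n \mapsto h(X_n)$ is a single computable map from $\omega$ into codes for no-dead-end trees, the $S^m_n$ theorem produces a computable injection $g$ with $\phi_{g(n)}$ total and equal to $h(X_n)$, so that $g$ is a one-one reduction from $K$ to the index set of the corollary, which is therefore $\Sigma^0_{2\alpha}$-hard.

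For the upper bound, recall that the first proposition of this section establishes $\{S : |S|_{CB} \leq \alpha\} \in \Sigma^0_{2\alpha}$ by an effective transfinite recursion that relativizes to an arbitrary oracle. I would run that same recursion with the partial computable function $\phi_e$ playing the role of the oracle, replacing each oracle query by the corresponding $\phi_e$-computation. This produces a $\Sigma^0_{2\alpha}$ predicate of $e$ expressing ``$|S|_{CB} \leq \alpha$'' that is correct whenever $\phi_e$ codes a tree; conjoining it with the clause ``$\phi_e$ is total and names a tree with no dead ends'', which is $\Pi^0_2$, gives the index set. Since $\alpha > 1$ we have $2\alpha \geq 4$, so $\Pi^0_2 \subseteq \Sigma^0_{2\alpha}$ and the conjunction stays $\Sigma^0_{2\alpha}$; this $\Pi^0_2$ cost of checking totality is exactly what forces the hypothesis $\alpha > 1$, as in the remark following Theorem~\ref{ladrthm}.

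I do not expect a genuine obstacle; the one point needing care is the last one — confirming that the effective transfinite recursion of the upper-bound proposition really delivers a predicate of the \emph{index} $e$ at level $\Sigma^0_{2\alpha}$, rather than merely one that is $\Sigma^0_{2\alpha}$ relative to the (possibly partial) oracle $\phi_e$. This is the familiar, slightly tedious bookkeeping underlying index-set completeness statements, and it uses no idea beyond the theorem already in hand.
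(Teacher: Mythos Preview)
Your proposal is correct. The paper states this as an immediate corollary with no proof at all, so there is nothing to compare; your argument supplies precisely the routine translation between the topological statement (the theorem just above) and the index-set statement that the paper leaves implicit, including the $\Pi^0_2$ cost of totality that forces $\alpha>1$.
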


This result differs only cosmetically from the result as it was stated in
in \cite{Lempp} because 
he considered trees which might have dead ends and because he used 
a definition of $\Sigma^0_{2\alpha}$-complete which is off by one from 
our definition for $\alpha \geq \omega$.  
Due to a similar difference in notational conventions, the results of 
\cite{CenzerMauldin1983} also appear to be off by one from ours in 
the infinite case.

\section{Completeness and hardness on the Denjoy hierarchies}\label{sec:completeness}

In this section we present a construction which provides a 
computable reduction from $(WF, \neg WF)$ to $(ACG_\ast, \neg VBG)$.  
Because $ACG_\ast \subseteq ACG \cap VBG_\ast$ and 
$ACG \cup VBG_\ast \subseteq VBG$, 
this shows that all four classes are $\Pi^1_1$-complete.  Additionally, 
this reduction serves as a simultaneous uniform reduction from 
$(A_\alpha, A_{\alpha + 1})$ to $(ACG_{\ast\alpha}, \neg VBG_\alpha)$,
where $A_\alpha = \{T \in WF : |T|_{ls} \leq\alpha\}$.
We have $ACG_{\ast\alpha} \subseteq ACG_\alpha \cap VBG_{\ast\alpha}$ and 
$ACG_\alpha \cup VBG_{\ast\alpha} \subseteq VBG_\alpha$, so all these 
sets are at least as complex as $A_\alpha$, namely at least 
$\Sigma^0_{2\alpha}(Y)$-hard.

The idea is that each node of the given tree should contribute a finite length to the variation of the constructed function.  In most cases the total variation will be infinite as a result, but the way in which that infinite length is distributed will determine the rank of the function.

The following notation will be useful: if $J,K\subseteq I$ are two intervals, 
then $J\langle K\rangle$ 
denotes the interval that has the same relation to $K$ as 
$J$ has to $I$; that is, if $J = [a,b]$ and $K = [c,d]$ then 
$J\langle K\rangle = [c+a(d-c), c+b(d-c)]$.

To define this reduction, it will be useful to have a way to tell $F_T$ to 
increase its variation in a given interval $J$ 
in response to seeing more of $T$. 
Given an interval $J \subseteq [0,1]$, we define the wiggle function $W(J)$ 
as follows.  The goal is to have a function whose variation is at least 1 
(regardless of how small $J$ is), but which only takes values in $[0,|J|]$, 
where $|J|$ is the length of the interval $J$.  Therefore, for small $J$, 
the function should oscillate intensely.  We also leave some space at 
the top and bottom of each oscillation to give room for adding some more 
oscillations; this sets us up for a typical method of producing a function of 
high Denjoy rank.  
\begin{definition} Given an interval $J\subseteq I$, 
let $M$ be the least integer large enough that $2M|J| > 1$, and define
\begin{enumerate}
\item $W(J)(x) = 0$ if $x \not\in J$.
\item $W(J)(x) = 0$ if 
$x \in [\frac{4k}{4M+1}, \frac{4k+1}{4M+1}]\langle J\rangle$ for $k\leq M$.
\item $W(J)(x) = |J|$ if
$x \in [\frac{4k+2}{4M+1}, \frac{4k+3}{4M+1}]\langle J\rangle$ for $k<M$.
\item If $x \in [\frac{4k+i}{4M+1}, \frac{4k+i+1}{4M+1}]\langle J\rangle$ 
for $k<M$ and $i=1,3$, determine the value of $W(J)(x)$ by linear 
interpolation from the values already defined.
\end{enumerate}
Define 
\begin{align*}\flat(J) &= \{K \subseteq J : K \text{ is a maximal interval on which }
W(J) \text{ is constant}\}\\
 &= \left\{\left[\frac{4k+i}{4M+1}, \frac{4k+i+1}{4M+1}\right]\langle J\rangle : \mathaugust{(k < M, i = 0,2) \text{ or } (k = M, i = 0)}\right\}\end{align*}
\end{definition}

Fix an infinite sequence of disjoint closed interval subsets of $I$, 
decreasing in size as they 
approach 0, with gaps between them.  For specificity, say
$J_n = [\frac{1}{3n+2},\frac{1}{3n+1}]$.
For any interval $L$, we define the function $\tilde F(T,L)$ by 
recursion on the usual rank of $T$ as follows.
\begin{definition}\label{tildeF} Given an interval $L\subseteq I$, let $\tilde F(\emptyset, L)$ 
be the constant function 0, and for nonempty $T$, let
$$\tilde F(T,L) = W(L) +
\sum_{K \in \flat(L), n \in \omega} \tilde F(T_n, J_n\langle K\rangle).$$
\end{definition}
Observe that the $\tilde F(T_n,J)$ are being copied onto the plateaus of the wiggle 
functions. \august{Also, for $K,H \in \flat(L)$, the functions 
$\tilde F(T_n, J_n\langle K \rangle)$ and 
$\tilde F(T_m, J_m\langle H \rangle)$ have disjoint support,
unless $m=n$ and $K=H$.  So for $x \not\in \cup_{n,K} J_n\langle K \rangle$, 
we have $\tilde F(T,L)(x) = W(L)(x)$, and otherwise 
$\tilde F(T,L)(x) = W(L)(x) + \tilde F(T_n, J_n\langle K \rangle)(x)$ 
for the unique $n$ and $K$ such that $x \in J_n\langle K \rangle$. 
Furthermore, there is an open neighborhood of 
$J_n\langle K \rangle$ on which the functions 
$\tilde F(T,L)$ and $\tilde F(T_n,J_n\langle K \rangle)$ differ by 
only a constant.  Therefore,}
$$\mathaugust{P^\alpha_{\tilde F(T,L),X} \cap J_n\langle K \rangle = 
P^\alpha_{\tilde F(T_n,J_n\langle K \rangle),X} \cap J_n\langle K \rangle}$$
\august{for all $\alpha$, where $X$ is any of $VB, VB_\ast, 
AC, AC_\ast$.  This fact will be used throughout.}

It is possible to extend this recursive definition of $\tilde F$ so that 
any tree $T$ (even ill-founded) can be used.  Each $\sigma \in T$ 
contributes some wiggle to $\tilde F(T,L)$, in locations which can be described 
as follows.  Let $\C_\emptyset^L = \{L\}$,  
and given $\C_\sigma^L$\strikethrough{for $|\sigma| \geq 1$}, define
$$\C_{\sigma^\smallfrown n}^L = \bigcup_{H \in \C_\sigma^L} \{J_n\langle K\rangle : K \in \flat(H)\}.$$

\begin{definition} Given an interval $L\subseteq I$, define
$$F(T,L) = \sum_{\substack{\sigma \in T \\ H \in \C_\sigma^L}} W(H).$$
\end{definition}

\begin{prop} For all 
trees $T\subseteq \omega^{<\omega}$,
 $F(T,L)$ is well-defined and 
uniformly $T$-computable.
For $T \in WF$, $F(T,L) = \tilde F(T,L)$.
\end{prop}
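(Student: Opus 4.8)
The plan is to prove the three assertions in order: first well-definedness of $F(T,L)$ as an element of $C(I)$, then uniform $T$-computability, then the identity $F(T,L) = \tilde F(T,L)$ for well-founded $T$. The key geometric fact underpinning everything is that the families $C_\sigma^L$ for distinct $\sigma$ consist of pairwise disjoint intervals: this follows by induction on $|\sigma|$ from the facts that the intervals in $\flat(H)$ are pairwise disjoint, that the $J_n\langle K\rangle$ for distinct $n$ are disjoint (since the $J_n$ are disjoint with gaps) and all lie strictly inside $K$, and that they avoid the plateaus on which $W(H)$ was nonconstant. More precisely, each interval in $C_{\sigma^\smallfrown n}^L$ sits inside one of the constancy intervals $K \in \flat(H)$ for a unique $H \in C_\sigma^L$, so wiggle contributions at level $|\sigma|+1$ are supported on the plateaus of the level-$|\sigma|$ wiggles and hence do not interfere with one another vertically either. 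Consequently, at any point $x \in I$ and any fixed string length $\ell$, at most one $\sigma$ with $|\sigma| = \ell$ has some $H \in C_\sigma^L$ with $x \in H$.

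For well-definedness, I would show the sum $\sum_{\sigma, H} W(H)$ converges uniformly. Since each $W(H)$ takes values in $[0,|H|]$ and is supported on $H$, and since for each $\ell$ the level-$\ell$ intervals $\{H : H \in C_\sigma^L, |\sigma| = \ell\}$ are pairwise disjoint, the partial sum over all $\sigma$ of length exactly $\ell$ is a well-defined continuous function bounded by $\sup\{|H| : H \in C_\sigma^L, |\sigma| = \ell\}$. The crucial estimate is that these diameters shrink geometrically (or at least summably) in $\ell$: passing from $H$ to $J_n\langle K\rangle$ with $K \in \flat(H)$ multiplies the length by at most $\frac{1}{4M+1}\cdot|J_n| \le \frac{1}{5}\cdot\frac13$, uniformly, so level-$\ell$ contributions are bounded by $(1/15)^\ell$ or similar. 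Summing a geometric series gives uniform convergence, hence $F(T,L) \in C(I)$. (One should also check $F(T,L)$ vanishes outside $L$ and is real-valued, both immediate.)

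Uniform $T$-computability then follows because, given $\eps$, one can compute a finite $\ell$ with $\sum_{j \ge \ell}(\text{bound})_j < \eps/2$, enumerate the finitely many $\sigma \in T$ with $|\sigma| < \ell$ (this is the only place $T$ is queried, and it is a finite query into $T$), compute the finitely many intervals $H \in C_\sigma^L$ for those $\sigma$ — each $C_\sigma^L$ being obtained by a finite, explicitly computable sequence of operations $\flat(\cdot)$ and $J_n\langle\cdot\rangle$ starting from $L$ — and sum the corresponding piecewise-linear functions $W(H)$, which are rational piecewise-linear when $L$ has rational endpoints. This yields a piecewise-linear approximation within $\eps$, uniformly in $T$ and $\eps$. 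Finally, for $T \in WF$ the identity $F(T,L) = \tilde F(T,L)$ is proved by induction on the rank of $T$: for $T = \emptyset$ both sides are $0$; for nonempty $T$, split the defining sum for $F(T,L)$ into the $\sigma = \emptyset$ term, which is $W(L)$, and the terms with $\sigma = n^\smallfrown\tau$ for some $n$, which by definition of $C_{n^\smallfrown\tau}^L$ range exactly over $\{W(H) : H \in C_\tau^{J_n\langle K\rangle}, K \in \flat(L)\}$, i.e. over $\sum_{K \in \flat(L), n} F(T_n, J_n\langle K\rangle)$; applying the induction hypothesis to each $T_n$ (which has strictly smaller rank) turns this into $\sum_{K \in \flat(L), n} \tilde F(T_n, J_n\langle K\rangle)$, and comparing with the recursive definition of $\tilde F(T,L)$ finishes it.

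I expect the main obstacle to be the disjointness bookkeeping in the first paragraph — verifying carefully that wiggle contributions from incomparable nodes, and from a node versus its descendants, never overlap in a way that would spoil either the uniform bound or the clean splitting in the induction. Once the geometry is pinned down (the intervals in $C_\sigma^L$ form, across all $\sigma$, a laminar family with geometrically shrinking diameters, and descendant-contributions live on ancestor-plateaus), convergence, computability, and the identity all fall out routinely.
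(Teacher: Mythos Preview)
Your proposal is correct and follows essentially the same route as the paper: decompose the sum by level $\ell = |\sigma|$, use disjointness of the level-$\ell$ intervals together with the range bound $W(H)\subseteq[0,|H|]$ to get a uniformly summable bound on the level-$\ell$ contribution, conclude effective uniform convergence, and for well-founded $T$ derive $F=\tilde F$ by induction on rank via the identity $C_{n^\smallfrown\tau}^L=\bigcup_{K\in\flat(L)}C_\tau^{J_n\langle K\rangle}$. One small numerical slip: $|J_0|=\tfrac12$, not $\le\tfrac13$, so your constant $1/15$ should be adjusted (the paper simply uses $|J_n|\le\tfrac12$ to get $|H|\le 2^{-|\sigma|}$, which already suffices).
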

\begin{proof}
We may decompose $F(T,L) = \sum_{\ell < \omega} F_\ell(T,L)$, where
$$F_\ell(T,L) = \sum_{\substack{\sigma \in T : |\sigma| = \ell \\ H \in \C_\sigma^L}} W(H),$$
Observe that since each $J_n$ satisfies $|J_n|<1/2$, each $H \in \C_{\sigma}^L$ 
satisfies \august{$|H| \leq 2^{-|\sigma|}$}.  Furthermore, the intervals of 
$\bigcup_{\sigma : |\sigma| = \ell} \C_\sigma^L$ 
are disjoint.  Therefore, since $W(H)$ has its range in $[0,|H|]$, 
each $F_\ell(T,L)$ has its range in $[0,2^{-\ell}]$.  Also, the $F_\ell(T,L)$ 
are uniformly computable.  Therefore, $F(T,L)$ is the effective uniform 
limit of computable functions, and is therefore computable.  

Now suppose $T \in WF$.  If $T$ is just a root,
it is clear that $F(T,L)= \tilde F(T,L)$.  Assuming that each 
$F(T_n,K) = \sum_{\sigma \in T_n, H \in \C_\sigma^K} W(H)$, the agreement 
of the two definitions follows in general because for each $n$, 
$$\C_{n^\smallfrown \sigma}^L = \bigcup_{K \in \flat(L)} \C_\sigma^{J_n\langle K\rangle}.$$
\end{proof}

\begin{prop}\label{prop:14} 
If $T$ is not well-founded, then $F(T,I) \not\in VBG$.
\end{prop}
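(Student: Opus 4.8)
The plan is to show that if $T$ is not well-founded, then for \emph{every} countable sequence of closed sets $E_m$ with $\bigcup_m E_m = I$, there is some $E_m$ on which $F(T,I)$ fails to be $VB$; by the definition of $VBG$ this gives $F(T,I) \notin VBG$. Equivalently, using Theorem~\ref{thm:7}, it suffices to exhibit a single closed set $E \subseteq I$ such that for \emph{no} interval $[a,b]$ with $(a,b)\cap E \neq \emptyset$ is $F(T,I)$ of bounded variation on $[a,b]\cap E$. The natural candidate for $E$ is built from a fixed infinite path $P \in [T]$: along $P$, the construction nests the wiggle functions $W(H)$ for $H \in C_{P\uhr k}^I$ inside one another forever, each contributing variation at least $1$, and the nesting intervals shrink to a point (since $|H| < 2^{-|\sigma|}$ for $H \in C_\sigma^I$). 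Let $x_P$ be the limit point of this nested sequence of intervals, and consider the behavior of $F(T,I)$ near $x_P$.

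First I would make precise the ``infinite tower of wiggles'' along $P$. For each $k$, fix $H_k \in C_{P\uhr k}^I$ with $H_{k+1} \subseteq J_{P(k)}\langle K\rangle$ for some $K \in \flat(H_k)$, chosen so that the $H_k$ are nested; their intersection is a single point $x_P$. On each $H_k$, the summand $W(H_k)$ of $F(T,I)$ has variation $\geq 1$, and crucially its oscillation is concentrated on $H_k$ while it vanishes at the endpoints of $H_k$ (property (1)--(2) of the wiggle function), so the wiggles at different levels do not cancel — their variations add. Thus $F(T,I)$ has \emph{infinite} variation on any neighborhood of $x_P$. Second, I would choose $E$ to be a carefully selected subset of $\{x_P\} \cup \bigcup_k (\text{finitely many points of } H_k)$ accumulating at $x_P$ from inside successive wiggles: specifically, pick in each $H_k$ a pair of points $a_k, b_k$ realizing a variation contribution $\geq 1$ from $W(H_k)$ (e.g.\ a point where $W(H_k) = 0$ and a point where $W(H_k) = |H_k|$, both among the plateau intervals $\flat(H_k)$, chosen to lie outside $H_{k+1}$ so the inner wiggles do not interfere), together with $x_P$ itself. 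Then $E$ is closed (its only limit point is $x_P$, which is included), and for any interval $[a,b]$ with $(a,b)\cap E \neq \emptyset$, the intersection $[a,b]\cap E$ contains cofinitely many of the pairs $\{a_k,b_k\}$, whence $\sum_k |F(b_k) - F(a_k)| \geq \sum_k 1 = \infty$, so $F$ is not $VB$ on $[a,b]\cap E$. By Theorem~\ref{thm:7}, $F(T,I) \notin VBG$.

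The one point requiring genuine care — and the main obstacle — is verifying that the variation contributions from the distinct wiggle levels genuinely \emph{accumulate} rather than partially cancel or get absorbed. This is where the ``leave some space at the top and bottom of each oscillation'' feature of $W(J)$ matters: I need to choose $a_k, b_k$ in the \emph{plateau} intervals $K \in \flat(H_k)$ (where $W(H_k)$ is locally constant, equal to $0$ or $|H_k|$), so that the value of $F(T,I)$ at $a_k$ (resp.\ $b_k$) is $W(H_k)$ evaluated there plus the contribution of all deeper levels, which is bounded by $2^{-k-1}$ and hence negligible; meanwhile the shallower levels $W(H_j)$ for $j < k$ contribute the \emph{same} value at $a_k$ and $b_k$ (since $a_k, b_k$ lie in a common plateau of every shallower wiggle, by the nesting) and so cancel in the difference $F(b_k) - F(a_k)$. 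Thus $|F(b_k) - F(a_k)| \geq |J|\text{-scale variation of } W(H_k) - 2^{-k} \geq \tfrac12$ for large $k$ (after normalizing — recall $W(H_k)$ has variation $\geq 1$ but range only $[0,|H_k|]$, so I should instead use two points straddling a single rise of $W(H_k)$, giving $|F(b_k)-F(a_k)| \geq |H_k| - 2^{-k}$; to get a divergent sum I instead use, in $H_k$, many pairs summing to variation $\geq 1$, all lying outside $H_{k+1}$ and in plateaus of the shallower wiggles). Bookkeeping the geometry of which plateau intervals of $\flat(H_k)$ avoid $H_{k+1}$ and verifying the cancellation of shallower levels is routine but is the crux; everything else is a direct appeal to Theorem~\ref{thm:7} and the range bound $|H| < 2^{-|\sigma|}$ established in the preceding proposition.
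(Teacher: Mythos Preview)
Your overall strategy matches the paper's: fix a path $Z$, exhibit a closed set on which $F(T,I)$ has unbounded variation on every portion, and invoke Theorem~\ref{thm:7}. But the constructions diverge. You pick a single nested chain $H_0\supsetneq H_1\supsetneq\cdots$ and let $E$ be countable with one limit point $x_P$; the paper instead takes the full Cantor-like set $P_Z=\bigcap_\ell\,\overline{\bigcup \C^I_{Z\uhr\ell}}$, exploiting that each $H\in \C^I_{Z\uhr M}$ contains at least $2^N$ disjoint members of $\C^I_{Z\uhr(M+N)}$, each contributing variation $\geq 1$, so $F$ has variation $\geq 2^N$ on $P_Z\cap H$.

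The gap is exactly where you flag it. First, a misconception: ``outside $H_{k+1}$'' does not dodge the inner wiggles, because \emph{every} plateau $K\in\flat(H_k)$ carries its own copies $J_n\langle K\rangle$ of the deeper structure, not just the plateau containing $H_{k+1}$. Second, and this is the real problem, your bound $2^{-k-1}$ on the deeper contribution (from $\operatorname{range}(F_\ell)\subseteq[0,2^{-\ell}]$) is of the \emph{same order} as the single-step variation $|H_k|<2^{-k}$ of $W(H_k)$, so it is not negligible. You notice this and switch to ``many pairs summing to $\geq 1$'', but with $\sim 1/|H_k|$ pairs and error $\sim 2^{-k}$ per point, the accumulated error is $\sim 2^{-k}/|H_k|\geq 1$, and the estimate still collapses.

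The repair is to kill the error exactly rather than bound it. Choose your plateau witnesses in the \emph{gaps} of $K\setminus\bigcup_n J_n\langle K\rangle$ (the $J_n$ were chosen with gaps between them precisely so that such points exist); there every deeper $W(H)$ vanishes identically and no error term appears. The paper's device is a variant of this: for each $K\in\flat(H)$ it takes $x=\min(P_Z\cap K)$ and shows that at this minimum, $x$ must lie in the leftmost sub-plateau at every deeper level, where each $W$ is $0$. Hence the partial sum $G=\sum_{\ell\leq M+N}F_\ell$ agrees with $F(T,I)$ \emph{exactly} at those witnesses, and $G$'s variation (which is $\geq 1$ on each $H$, hence $\geq 2^N$ on $H_0$) transfers to $F(T,I)$ with no loss. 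That exact-vanishing trick, together with the exponential branching of $P_Z$ in place of your linear single chain, is what makes the paper's argument go through cleanly.
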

\begin{proof}
Let $Z$ be a path in $T$, and consider 
$$P_Z = \bigcap_{\ell\in \omega} \left(\bigcup \C_{Z\uhr \ell}^I\right)$$
This is an intersection of a decreasing sequence of closed sets in a 
compact space, so $P_Z$ is closed and non-empty.  
We claim that there is no open interval $U$ such that $P_Z \cap U$ is 
nonempty and $F(T,I)$ has 
bounded variation on $P_Z \cap U$.  By Theorem \ref{thm:7}, this suffices.

Fix an open interval $U$ and a number $N$.  Since $P_Z \cap U \neq \emptyset$, 
there is an $M$ and $H_0 \in \C_{Z\uhr M}^I$ such that $P_Z\cap H_0 \subseteq U$.  
We will show that the variation of 
$F(T,I)$ on $P_Z \cap H_0$ is at least $2^N$.
Let $\C = \{H \in \C_{Z\uhr (M+N)}^I : H \subseteq H_0\}$.  Since each interval of 
$\C_{Z\uhr \ell}^I$ is broken into multiple intervals 
in $\C_{Z\uhr (\ell+1)}^I$, there are at least $2^N$ intervals in $\C$.

Let the functions $F_\ell$ be as in the previous proposition.
For each $H \in \C$, $\sum_{\ell < M+N} F_\ell(T,I)$ is constant on $H$, 
the function $W(H)$ has variation at least 1, 
and $F_{M+N}(T,I)(x)=W(H)(x)$ for all $x \in H$.
Therefore, the function $G$ defined by 
$G=\sum_{\ell \leq M+N} F_{\ell}(T,I)$ has variation at least 1 on $H$.  
In fact, $G$ has variation at least 1 on any subset of $H$ 
which contains at least one point of $K$ for each $K \in \flat(H)$.  
Therefore, it suffices to show that each such $K$ 
contains a point $x \in P_Z$ such that $G(x) = F(T,I)(x)$. 

We claim that $x= \min (P_Z \cap K)$ is such a point.
For each $\ell > M+N$, there is a unique interval 
$J \in \cup_{\sigma \in T : |\sigma| = \ell} 
\mathcal C^I_{\sigma}$ such that $x \in J$.  All those intervals are disjoint, 
so $F_\ell(T,I)(x) = W(J)(x)$.  We claim that $W(J)(x) = 0$.
Since $x \in P_Z$, $J \in \mathcal C^I_{Z \uhr \ell}$.  
Because $x$ is the minimum 
of $P_Z \cap K$, it is also the minimum of $P_Z \cap J$. 
But $P_Z$ contains points from every $L \in \flat(J)$.  
Therefore, $x$ is an element of the first 
such interval $L \subseteq J$.  
Since $W(J) \equiv 0$ on its first interval,
$W(J)(x) = 0$, as required.  Since this is true for arbitrary $\ell$, we have 
$G(x) = F(T,I)(x)$ for all such $x$.  Therefore, the variation of $F(T,I)$ on 
$H_0$ is at least $2^N$, so $F(T,I) \not\in VBG$.
\end{proof}

\begin{prop} If $T$ is well-founded and 
$|T|_{ls}\geq 1$, then $F(T,I) \in ACG_\ast$, and 
$|F(T,I)|_X = |T|_{ls}$, where $X$ is any of $VB, VB_\ast, AC, AC_\ast$.
\end{prop}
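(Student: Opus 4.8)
The plan is to prove the statement by induction on the usual rank of the well-founded tree $T$, exactly mirroring the structure of the Cantor-Bendixson proposition earlier in the paper. First I would establish the easy membership claims. Since $T$ is well-founded, $F(T,I) = \tilde F(T,I)$ is a computable continuous function, and I claim $F(T,I) \in ACG_\ast$; in fact I expect the stronger and cleaner fact that $F(T,I)$ is built so that on each interval of $\mathcal C^I_\sigma$ the function is (outside the finitely many wiggle-supports active at bounded levels) made of copies of lower-rank pieces, so the derivation process $P^\alpha_{F,X}$ can be traced. Because the wiggle functions $W(H)$ are piecewise linear, $F(T,I)$ is locally Lipschitz away from the ``limit set'', hence satisfies Lusin's $(N)$ automatically; combined with $VBG_\ast$-membership (which falls out of the rank computation below terminating at a countable ordinal) this gives $F(T,I) \in ACG_\ast$ by the characterization $ACG_\ast = VBG_\ast \cap \{F : F \text{ satisfies }(N)\}$ recalled after Theorem~\ref{thm:7}. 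Since being $AC_\ast$ on an interval implies being $AC$, $VB_\ast$ and $VB$ on it, once I show $|F(T,I)|_{AC_\ast} = |T|_{ls}$ and $|F(T,I)|_{VB} = |T|_{ls}$ the four ranks are squeezed together and all equal $|T|_{ls}$; so it suffices to compute $|F(T,I)|_{VB_\ast}$ from above and $|F(T,I)|_{VB}$ from below (the two extreme notions), since $P^\alpha_{F,VB_\ast} \subseteq P^\alpha_{F,VB}$ pointwise.

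The core is the rank computation, and here I would argue that the derivation process on $F(T,I)$ behaves just like the Cantor-Bendixson derivative transported through the interval coding. Concretely, I expect an identity of the form: for $(a,b)$ a suitable neighbourhood of a point of $\bigcup \mathcal C^I_\sigma$, $F(T,I)$ is $X$ on $[a,b]\cap P$ essentially when the corresponding subtree has been exhausted, so that $P^\alpha_{F,X}(I)$, intersected with the interval coding the node $\sigma$, equals the image under the coding of $P^{\alpha}$ applied to $F(T_\sigma, \cdot)$ — i.e. an analogue of the formula $D^\alpha(S) = \bigcup_{\sigma\in C}\sigma^\smallfrown D^\alpha(S_\sigma)$. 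Using the recursive definition $\tilde F(T,L) = W(L) + \sum_{K\in\flat(L), n}\tilde F(T_n, J_n\langle K\rangle)$, the wiggle $W(L)$ has variation $\geq 1$ on $L$ but is \emph{linear, hence $VB$ and $AC$}, on the sub-intervals where the recursive pieces live; so removing an open interval that sees only finitely many of the $J_n\langle K\rangle$ pieces removes all of $W(L)$'s wiggle but leaves the $\limsup$ over $n$ of the ranks of the $\tilde F(T_n,\cdot)$. This is precisely the $\limsup_n |T_n|_{ls} + 1$ versus $\sup_n |T_n|_{ls}$ dichotomy built into the definition of $|\cdot|_{ls}$: the upper bound uses that if $|T_n|_{ls}\leq\alpha$ for all $n\geq N$, then after $\alpha$ derivations only the first $N$ nested intervals plus the linear backbone remain, which is $VB_\ast$, so $|F(T,I)|_{VB_\ast}\leq \alpha+1 = |T|_{ls}$; the lower bound uses that if $\limsup_n |T_n|_{ls} = \alpha$ then the ``gap-accumulation point'' near $0$ inside each interval survives $\alpha$ derivations and $F$ has unbounded variation on every neighbourhood of it (by the argument already used in Proposition~\ref{prop:14}, localized), so $|F(T,I)|_{VB} > \alpha$.

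For the induction to go through cleanly I would want a localized version of the key observation from the proof of Proposition~\ref{prop:14}: on any interval $H \in \mathcal C^I_{Z\uhr\ell}$ witnessing a node along a branch, the partial sum $\sum_{\ell'\le\ell} F_{\ell'}(T,I)$ has variation $\geq 1$, and evaluating the tail $F_{\ell'}$ for $\ell' > \ell$ at the points $\min(P\cap K)$ for $K\in\flat(H)$ gives zero tail contribution, so $F(T,I)$ itself has variation $\geq 1$ on $P\cap H$. Iterating $N$ levels of nesting forces variation $\geq 2^N$, which is what shows that a point where the branch-ranks accumulate does not get removed prematurely. I also need that passing to $\sigma^\smallfrown T_\sigma$ shifts the rank: the scaling $J_n\langle K\rangle$ and the choice of $M$ in $W$ were set up so that $W(J_n\langle K\rangle)$ still has variation $\geq 1$ regardless of how small the interval is, which is exactly what keeps the recursion from collapsing ranks.

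The main obstacle I anticipate is the \textbf{upper bound at limit stages} and the bookkeeping that the derivation process $P^\alpha_{F,VB_\ast}$ really does match the tree-side recursion interval-by-interval — in particular, verifying that when we delete an open interval $(a,b)$ from $P^\alpha$, we are allowed to delete it (i.e. $F$ genuinely is $VB_\ast$ on $[a,b]\cap P^\alpha$) and that this deletion corresponds to a deletion on the tree side, with no ``extra'' variation leaking in from the wiggle functions $W(H)$ of ancestor nodes $H$ whose supports overlap $(a,b)$. This requires carefully using the disjointness of $\bigcup_{|\sigma|=\ell}\mathcal C^I_\sigma$ at each fixed level together with the uniform convergence decomposition $F = \bigcup_\ell F_\ell$ to separate the contribution of each level, so that the ancestor wiggles contribute only a \emph{linear} (hence variation-controlled) piece on the small intervals where the descendant action happens. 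The limit case should then follow by compactness as in the Cantor-Bendixson proposition, since $P^\lambda_{F,X} = \bigcap_{\alpha<\lambda} P^\alpha_{F,X}$ is empty iff some $P^\alpha_{F,X}$ already is.
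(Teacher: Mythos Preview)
Your overall strategy---induction on the usual rank of $T$, squeezing the four ranks between an upper bound on $|F|_{VB_\ast}$ and a lower bound on $|F|_{VB}$, and matching the derivation $P^\alpha$ to the limsup recursion---is exactly the paper's approach. One small correction: there is no separate limit case to worry about, because $|T|_{ls}$ is always a successor for nonempty well-founded $T$ (this was already noted in the Cantor--Bendixson section), so your ``main obstacle at limit stages'' is a non-issue.

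The genuine gap is in your lower-bound mechanism. You plan to reuse the argument of Proposition~\ref{prop:14}, finding points $\min(P\cap K)$ for $K\in\flat(H)$ at which the tail $\sum_{\ell'>\ell}F_{\ell'}$ vanishes. That worked there because $P = P_Z$ was built from a single path $Z$, so $\min(P_Z\cap K)$ is forced to lie in the \emph{leftmost} flat subinterval of $K$, where $W\equiv 0$, and this propagates down every level. When instead $P = P^\beta_{F,X}$, the set $P^\beta\cap K$ meets only those $J_n\langle K\rangle$ with $|T_n|_{ls}>\beta$, and the minimum of that set need not sit in any interval where the deeper wiggles vanish; the ``$\min$'' trick does not transfer. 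The paper closes this gap by strengthening the inductive hypothesis: it proves that whenever $|T|_{ls}=\alpha+1$ there exists a point $x\in P^\alpha_{F(T,L),X}$ with $F(T,L)(x)=0$ (taking the leftmost $K\in\flat(L)$ and using the recursive structure). These zero-value witnesses, carried through the induction, are exactly what one plugs into the variation estimate to show $F$ has variation $\geq N$ on $P^\beta$ near $\min K$. Without tracking such points explicitly, your induction cannot produce the witnesses needed for the lower bound.
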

\begin{proof}
We proceed by induction on the usual rank of the tree, starting with the 
root-only tree 
$T=\{\emptyset\}$.  The statement we prove inductively is slightly stronger: 
for all intervals $L$, 
if $|T|_{ls} = \alpha+1$, then 

\begin{itemize}
\item $|F(T,L)|_X = \alpha+1$, and 
\item \august{for each $K \in \flat(L)$, there is a point $x_K \in K$ such 
that
$x_K \in P^\alpha_{F(T,L), X}$ and $F(T,L)(x) = W(L)(x)$.}
\end{itemize}
  From here forward, we 
drop the subscripts on $P^\alpha_{F(T,L), X}$, whenever 
possible, \august{always omitting $X$, and} writing $P^\alpha$ 
\august{instead of $P^\alpha_{F(T,L)}$}.  When subscripts are included, it is 
because we reference the derivation applied to a different function,
typically $F(T_n, J_n\langle K\rangle)$ for some $n$ and some $K$.

In the base case, the tree $T = \{\emptyset\}$ has $|T|_{ls} = 1$, and 
$F(T,L) = W(L)$, 
so it also has rank 1, \august{and the existence of appropriate $x_K$ 
is immediate}.  Let $T$ be given, with $|T|_{ls} = \alpha + 1$. 
\august{If $\alpha = 0$, then just as in the base case, $P^0 = L$, 
so letting $x_K = \min K$ for each $K \in \flat(L)$ suffices.  So 
assume that $\alpha \geq 1$.}
By induction, for each $n$, $|T_n|_{ls} = |F(T_n,J_n\langle K \rangle)|_X$.  
We know that
$$P^\alpha \subseteq \bigcup_{K \in \flat(L)} \{\min K\} \cup \bigcup_n J_n\langle K \rangle,$$ 
because $F(T,L)$ \august{is absolutely continuous} outside this set. 
\august{By the discussion following Definition \mbox{\ref{tildeF}}},
because $|T_n|_{ls} \leq \alpha+1$ 
for all $n$, we have $P^{\alpha+1} \cap J_n\langle K \rangle = \emptyset$ for 
all $n$ \august{and $K$}. 
Similarly, because $|T_n|_{ls} \leq \alpha$ for sufficiently large $n$,
we also have $P^\alpha \cap J_n\langle K \rangle = \emptyset$ for sufficiently 
large $n$.  Therefore, if $\min K \in P^\alpha$, it is isolated, so 
\august{for all $K$}, we have $\min K \not \in P^{\alpha+1}$. 
This shows that $|F(T,L)|_X \leq \alpha +1$.

On the other hand, suppose that $|T|_{ls} = \alpha+1$.  
\august{Fix $K \in \flat(L)$.  Observe 
that finding a point $x_K$ as above suffices to complete the argument, 
because $x_K$ witnesses that $P^\alpha \neq \emptyset$, and so 
$|F(T,L)|_X \geq \alpha+1$.  We consider three 
cases, which are not mutually exclusive, but which exhaust all possibilities.} 

{\bf Case 1.} Suppose $|T_n|_{ls} = \alpha+ 1$ 
for some $n$.  Then, letting $H\in \flat(J_n\langle K \rangle)$
be leftmost, by induction
let $x_K \in H$ with $x_K \in P^\alpha_{F(T_n,J_n\langle K\rangle)}$
and $F(T_n,J_n\langle K\rangle)(x_K) = W(J_n\langle K \rangle)(x_K)=0$, 
where the last equality follows because $H$ was chosen leftmost. 
Observe that 
$x_K \in P^\alpha$ as well, and $F(T,L)(x_K) = W(L)(x_K) + F(T_n,J_n\langle K\rangle)(x_K) = W(L)(x_K).$

{\bf Case 2.} Suppose that $\alpha$ is a limit and $\alpha = \limsup_n |T_n|_{ls}$.  In this case we may let $x_K = \min K$. 
It is immediate that $F(T,L)(x_K) = W(L)(x_K)$, because no other summand 
in Definition \ref{tildeF} has $x_K$ in its support.  For any $\beta < \alpha$,
there are infinitely many $n$ such that $|T_n|_{ls} > \beta$.  
For each such $n$, by induction there is some $x_n \in J_n\langle K \rangle$ 
such that $x_n \in P^\beta_{F(T_n,J_n\langle K \rangle)}$.  We also 
have $x_n \in P^\beta$ for all such $n$.  Since $x_K$ is the limit of 
of this collection of $x_n$, it follows that $x_K \in P^\beta$, 
because $P^\beta$ is closed.  This was done for an arbitrary $\beta< \alpha$, 
so $x_K \in P^\alpha$.

{\bf Case 3.}
Suppose that $\alpha = \beta+1$ and $\alpha = \limsup_n |T_n|_{ls}$. 
Then for infinitely many $n$, we have 
$|T_n|_{ls} = \alpha$.  Let $x_K = \min K$.  As above, we have 
$F(T,L)(x_K) = W(L)(x_K)$, so it remains only to show that $x_K \in P^\alpha$.

For each $n$ such that $|T_n|_{ls} = \alpha$, and each 
$H \in \flat(J_n\langle K\rangle)$, by induction let $x_{n,H} \in H$ 
be such that $x_{n,H} \in P^\beta_{F(T_n, J_n\langle K \rangle)}$ and
\begin{equation}\label{inductive-equality}F(T_n, J_n\langle K \rangle)(x_{n,H}) = W(J_n\langle K\rangle)(x_{n,H}).\end{equation}
We also have each such $x_{n,H} \in P^\beta$.  Now observe that 
the variation of $W(J_n\langle K \rangle)$ on 
$P^\beta$ is at least 1, because $P^\beta$ 
contains one point $x_{n,H}$ in each interval 
$H \in \flat(J_n\langle K\rangle)$. By (\ref{inductive-equality}), 
the variation of $F(T_n, J_n\langle K \rangle)$ is at least 1 on $P^\beta$, 
so in view of the discussion following 
Definition \ref{tildeF}, the variation of $F(T,L)$ 
is also at least 1 on $P^\beta \cap J_n\langle K \rangle$.  
In any neighborhood of $x_K$, there are infinitely many intervals 
$J_n \langle K \rangle$ for which this is true.  Therefore,
 $F(T,L)$ is not $VB$ on 
$P^\beta$ in any neighborhood of $x_K$.  
This shows that $x_K \in P^\alpha$.\end{proof}

\begin{cor} Let $Y \in 2^\omega$ and $\alpha < \omega_1^Y$. 
For any $\Sigma^0_{2\alpha}(Y)$ set $B$, there is a $Y$-computable 
reduction from $(B, \neg B)$ to $(ACG_{\ast\alpha}, \neg VBG_\alpha)$.
\end{cor}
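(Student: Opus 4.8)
The plan is to combine Theorem~\ref{ladredst} with the reduction constructed in the preceding propositions. First I would recall that by Theorem~\ref{ladredst}, for any $\Sigma^0_{2\alpha}(Y)$ set $B$ there is a $Y$-computable reduction $g$ from $B$ to $A_\alpha = \{T \in Tr : |T|_{ls} \leq \alpha\}$, where this reduction sends $\neg B$ into $\neg A_\alpha$, i.e. into the set of trees which are either ill-founded or well-founded of limsup rank $> \alpha$. Since we are working with $\alpha > 1$, we may further arrange (or simply observe from the construction) that every tree in the range of $g$ has limsup rank at least $1$ when it is well-founded, or pre-compose $g$ with a fixed map that adds a bottom wiggle level so that the rank-$1$ hypothesis of the last proposition is met; this costs nothing since $\alpha > 1$.

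Next I would form the composite $X \mapsto F(g(X), I)$, which is $Y$-computable because $g$ is $Y$-computable and $F(\cdot, I)$ is uniformly computable from its tree argument by Proposition on well-definedness of $F$. I then trace through the three cases. If $X \in B$, then $g(X) \in WF$ with $|g(X)|_{ls} \leq \alpha$ (and $\geq 1$), so by the final proposition $F(g(X), I) \in ACG_\ast$ with $|F(g(X),I)|_X = |g(X)|_{ls} \leq \alpha$ for each choice of $X \in \{VB, VB_\ast, AC, AC_\ast\}$; in particular $F(g(X),I) \in ACG_{\ast\alpha}$. If $X \notin B$ and $g(X)$ is well-founded, then $|g(X)|_{ls} \geq \alpha + 1$, so $|F(g(X),I)|_{VB} = |g(X)|_{ls} \geq \alpha+1$, hence $F(g(X),I) \notin VBG_\alpha$. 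If $X \notin B$ and $g(X)$ is ill-founded, then by Proposition~\ref{prop:14}, $F(g(X),I) \notin VBG$, so a fortiori $F(g(X),I) \notin VBG_\alpha$. In all cases for $X \notin B$ we land in $\neg VBG_\alpha$, as required.

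The main thing to be careful about — and the only real obstacle — is the rank-$1$ side condition in the last proposition: it requires $|T|_{ls} \geq 1$, i.e. $T$ nonempty, whereas a reduction to $A_\alpha$ might in principle output the empty tree for some inputs in $B$. The clean fix is to replace $F(g(X), I)$ by $F(g(X), I)$ computed on a tree that has been modified to always contain the root (e.g. replace $g(X)$ by $\{\emptyset\} \cup g(X)$, or route through $g(X)_{\langle 0\rangle}$-style padding); adding a root only changes a well-founded tree's limsup rank within the range $[\max(1,\beta), \beta]$ when the original rank is $\beta \geq 1$, and since $\alpha > 1$ this does not disturb the dividing line between $\leq \alpha$ and $\geq \alpha+1$. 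With that adjustment the corollary follows immediately by composing the two reductions.

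Finally I would note the hardness consequences recorded in the text: because $ACG_{\ast\alpha} \subseteq ACG_\alpha \cap VBG_{\ast\alpha}$ and $ACG_\alpha \cup VBG_{\ast\alpha} \subseteq VBG_\alpha$, the same composite map $X \mapsto F(g(X),I)$ is simultaneously a $Y$-computable reduction from $B$ to each of $ACG_{\ast\alpha}$, $ACG_\alpha$, $VBG_{\ast\alpha}$, and $VBG_\alpha$, so each of these is $\Sigma^0_{2\alpha}(Y)$-hard; taking the union over all $Y$ with $\alpha < \omega_1^Y$ gives $\mathbf\Sigma^0_{2\alpha}$-hardness, and taking $Y$ computable gives $\Sigma^0_{2\alpha}$-hardness when $\alpha < \omega_1^{CK}$. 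This yields one direction of Theorem~\ref{thm:1} and, specializing to the $\Pi^1_1$ level via the reduction from $(WF, \neg WF)$ to $(ACG_\ast, \neg VBG)$, the $\Pi^1_1$-completeness in Theorem~\ref{thm:2}(1).
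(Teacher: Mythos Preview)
Your proposal is correct and matches the paper's approach: the corollary is stated there without proof, being immediate from composing Theorem~\ref{ladredst} with the two preceding propositions exactly as you describe. The empty-tree concern you raise is harmless but unnecessary: $F(\emptyset,I)$ is by definition the constant zero function, which already lies in $ACG_{\ast\alpha}$ for every $\alpha\geq 1$, so no padding of $g(X)$ is needed (and note the corollary does not assume $\alpha>1$).
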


\begin{theorem}
The sets $VBG, VBG_\ast, ACG, ACG_\ast$ are all $\Pi^1_1$-complete.
\end{theorem}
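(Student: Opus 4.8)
The plan is to combine the construction from this section with the inclusions among the four classes. The previous propositions give a computable map $T \mapsto F(T,I)$ from trees $T \subseteq \omega^{<\omega}$ to $C(I)$ with the following properties: if $T \notin WF$ then $F(T,I) \notin VBG$ (Proposition~\ref{prop:14}), and if $T \in WF$ then $F(T,I) \in ACG_\ast$ (the previous proposition, noting that the rank being well-defined is exactly membership in $ACG_\ast$, and that $|T|_{ls} \geq 1$ whenever $T \neq \emptyset$; the empty tree can be sent to the constant $0$ function, which is in $ACG_\ast$). So this map is a computable reduction from $(WF, \neg WF)$ to $(ACG_\ast, \neg VBG)$.

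First I would record that $WF$ is $\Pi^1_1$-complete and that each of $VBG, VBG_\ast, ACG, ACG_\ast$ is $\Pi^1_1$ (the latter from the Corollary to Theorem~\ref{thm:7}). Next I would invoke the chain of inclusions noted at the start of the section: $ACG_\ast \subseteq ACG \cap VBG_\ast$ and $ACG \cup VBG_\ast \subseteq VBG$. Consequently, for any class $X$ with $ACG_\ast \subseteq X \subseteq VBG$, the reduction $T \mapsto F(T,I)$ also reduces $(WF, \neg WF)$ to $(X, \neg X)$: if $T \in WF$ then $F(T,I) \in ACG_\ast \subseteq X$, and if $T \notin WF$ then $F(T,I) \notin VBG \supseteq X$, so $F(T,I) \notin X$. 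Since $VBG_\ast$, $ACG$, and $ACG_\ast$ all lie between $ACG_\ast$ and $VBG$, and $VBG$ itself does too, this single reduction handles all four classes at once. Combined with the fact that each class is $\Pi^1_1$, this establishes that all four are $\Pi^1_1$-complete.

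There is no real obstacle here: all the work has been done in the preceding propositions, and this theorem is purely a packaging of those results together with the elementary set-theoretic inclusions among the Denjoy classes. The only point requiring a word of care is the boundary behavior at rank $1$ and the empty tree, which is why the reduction is stated with the convention that $\emptyset \mapsto 0 \in ACG_\ast$; for nonempty well-founded $T$ one has $|T|_{ls} \geq 1$ and the previous proposition applies directly.
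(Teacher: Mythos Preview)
Your proposal is correct and matches the paper's approach exactly: the paper states at the outset of this section that the construction gives a computable reduction from $(WF,\neg WF)$ to $(ACG_\ast,\neg VBG)$, and that the inclusions $ACG_\ast \subseteq ACG \cap VBG_\ast$ and $ACG \cup VBG_\ast \subseteq VBG$ then yield $\Pi^1_1$-completeness for all four classes simultaneously. The paper in fact gives no separate proof for this theorem beyond these remarks, so your write-up is, if anything, slightly more detailed (e.g., your handling of the empty-tree edge case) than what appears there.
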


This answers a question of Walsh \cite{walsh}, 
who asked whether $ACG_\ast$ was $\Pi^1_1$-complete.  Walsh 
also showed that the graph of Denjoy integration is a $\Pi^1_1$, 
non-Borel subset of $M(I)\times C(I)$.
He asked whether that set is $\Pi^1_1$-complete, 
a question which we answer in the affirmative.
From here on, 
let $F_T = F(T,I)$.

\begin{lemma}\label{lem:19}
For all trees $T\subseteq \omega^{<\omega}$, $F_T$ is a.e differentiable, and the map 
$T \mapsto (F_T', F_T)\in M(I)\times C(I)$ is computable.
\end{lemma}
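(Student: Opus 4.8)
The plan is to exploit the fact, established in Proposition 14 (the well-definedness proposition), that $F_T$ is the effective uniform limit of the computable functions $\sum_{\ell \le m} F_\ell(T,I)$, each of which is piecewise linear with finitely many pieces. Since each $F_\ell(T,I)$ is piecewise linear, it is differentiable except at finitely many points, hence a.e.\ differentiable, and the full sum $\sum_\ell F_\ell(T,I)$ is differentiable at every point lying outside the countable set $D$ of all the breakpoints of all the $F_\ell(T,I)$ \emph{and} outside the limit set $P = \bigcap_\ell \bigl(\bigcup \C^I_{Z\uhr\ell}\bigr)$-type accumulation points where the pieces shrink to zero length. The key observation is that outside this accumulation set, only finitely many of the $F_\ell(T,I)$ are nonconstant in a neighborhood of $x$: once $\ell$ is large enough, the interval of $\bigcup_{|\sigma|=\ell}\C^I_\sigma$ containing $x$ (if any) is bounded away from $x$, or $x$ avoids all such intervals entirely. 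So near such an $x$, $F_T$ agrees with a finite sum of piecewise linear functions, hence is differentiable there. It therefore suffices to check that the bad set — the union of the countably many breakpoints with the $\bigcup_\sigma \C$-accumulation set — has measure zero.

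First I would show that the accumulation set has measure zero. Recall from Proposition 14 that $F_\ell(T,I)$ has range in $[0,2^{-\ell}]$ and is supported on $\bigcup_{|\sigma|=\ell}\C^I_\sigma$, a disjoint union of intervals each of length $< 2^{-\ell}$. The set of points that lie in the support of $F_\ell(T,I)$ for infinitely many $\ell$ is contained in $\bigcap_m \bigcup_{\ell \ge m} \operatorname{supp}(F_\ell(T,I))$; but I need a measure bound on $\bigcup_{|\sigma|=\ell}\C^I_\sigma$ itself. By construction each interval $H \in \C^I_\sigma$ of level $\ell$ is carved from a parent interval at level $\ell-1$ by first applying $W$, whose plateaus $\flat(H)$ occupy a fixed fraction of $H$ bounded away from $1$, and then placing the disjoint shrinking copies $J_n\langle K\rangle$ inside each plateau $K$; since $\sum_n |J_n| < 1$ and in fact $\sum_n |J_n|$ is a fixed constant $c < 1$, the total measure of $\bigcup_{|\sigma|=\ell+1}\C^I_\sigma$ is at most $c'$ times that of $\bigcup_{|\sigma|=\ell}\C^I_\sigma$ for some fixed $c' < 1$. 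Hence $|\bigcup_{|\sigma|=\ell}\C^I_\sigma| \le (c')^\ell \to 0$, so the accumulation set, being contained in each $\bigcup_{|\sigma|=\ell}\C^I_\sigma$, has measure zero. Adding the countably many breakpoints keeps the bad set null, so $F_T$ is a.e.\ differentiable.

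Second, for computability of $T \mapsto (F_T', F_T) \in M(I)\times C(I)$: computability of $T\mapsto F_T\in C(I)$ is exactly Proposition 14. For $F_T'\in M(I)$, I would observe that the partial sum $G_m := \sum_{\ell\le m}F_\ell(T,I)$ is piecewise linear and uniformly $T$-computable, so its derivative $G_m'$ is a step function with rational-ish breakpoints — precisely the kind of ideal point used to represent $M(I)$ — and is uniformly $T$-computable as an element of $M(I)$. It then suffices to show $G_m' \to F_T'$ in the $M(I)$ metric at an effective rate. On the complement of $\bigcup_{\ell > m}\operatorname{supp}(F_\ell(T,I))$, the function $F_T$ equals $G_m$ up to an additive constant on each component, so $F_T' = G_m'$ there; on the remaining set, of measure $\le \sum_{\ell>m}(c')^\ell \le C(c')^m$, we simply bound $\int \min(1,|F_T' - G_m'|)$ by that measure. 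Hence $d(G_m', F_T') \le C(c')^m$, an effective modulus, so $F_T'$ is computable uniformly in $T$.

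The main obstacle I anticipate is pinning down the geometric-decay constant cleanly: one must verify that the plateaus $\flat(J)$ together with the shrinking pattern $J_n = [\frac1{3n+2},\frac1{3n+1}]$ really do force the level-$\ell$ supports to shrink at a uniform geometric rate independent of which branches of $T$ are present, and in particular that $\sum_n |J_n|$ together with the plateau fraction gives a contraction ratio strictly below $1$ uniformly. This is a bounded, routine estimate, but it is the crux: without the uniform geometric decay one gets a.e.\ differentiability but not the effective $M(I)$-convergence rate needed for computability.
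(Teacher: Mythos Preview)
Your overall strategy matches the paper's: approximate $F_T$ by partial sums, observe that on the complement of a small set the partial sum already equals $F_T$, deduce a.e.\ differentiability, and use the effective shrinking of that small set to get convergence of the derivatives in $M(I)$.  Your geometric-decay estimate is correct: the plateau fraction of $W(J)$ is $(2M+1)/(4M+1)\le 3/5$, and $\sum_n |J_n| = \sum_{n\ge 0} \frac{1}{(3n+1)(3n+2)} < 1$, so the level-$\ell$ support contracts at a fixed rate below $1$.

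There is one genuine gap.  You assert that $G_m := \sum_{\ell\le m} F_\ell(T,I)$ is ``piecewise linear with finitely many pieces'' and that $G_m'$ is ``precisely the kind of ideal point used to represent $M(I)$.''  This is false in general: each $F_\ell(T,I) = \sum_{|\sigma|=\ell,\, H\in\C^I_\sigma} W(H)$ already ranges over infinitely many $\sigma$ (the tree is $\omega$-branching) and hence over infinitely many disjoint intervals $H$.  So $G_m$ has infinitely many linear pieces and $G_m'$ is not a step function with finitely many rational breakpoints.  Your computability argument for $G_m'\in M(I)$ therefore needs an additional approximation step, or a different truncation.  The paper handles this by truncating in both depth and width, setting
\[
G_\ell \;=\; \sum_{\substack{\sigma\in T:\ |\sigma|<\ell\\ \text{and }\max(\sigma)<\ell}} \ \sum_{H\in\C^I_\sigma} W(H),
\]
which is a genuinely finite sum of wiggle functions, so that $G_\ell'$ really is (a.e.\ equal to) an ideal point of $M(I)$.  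The paper then notes, as you do, that $G_\ell=F_T$ off a set whose measure tends effectively to $0$, giving both a.e.\ differentiability and computability of $F_T'$ in $M(I)$.  With this corrected truncation your argument goes through.
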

\begin{proof}
For each $\ell$, let $$G_\ell = \sum_{\substack{\sigma \in T : |\sigma| < \ell \\ \text{ and } \max(\sigma) < \ell \\ H \in \mathcal C^I_\sigma}} W(H).$$
Then $\lim_{\ell\rightarrow \infty} G_\ell = F_T$.  Observe that 
$G'_\ell$ is a.e. equivalent to an ideal point of $M(I)$. 
We claim that in $M(I)$, $\lim_{\ell\rightarrow \infty} G'_\ell = F_T'$.  For this 
it suffices to observe that $G_\ell = F_T$ on any interval where $G_\ell' \neq 0$, 
and also on any interval that is disjoint from all intervals of
$\mathcal C^I_\sigma$ for $\sigma \in T$ with $|\sigma| \geq \ell$ or 
$\max(\sigma) \geq \ell$.  Regardless of whether $T$ is well-founded,
the measure of these intervals of agreement 
approaches 
1 in the limit, and the convergence is effective.
\end{proof}

\begin{theorem} The set $\{(f,F) \in M(I)\times C(I) : F \in ACG_\ast \text{ and } F'=f \text{ a.e.}\}$ 
is $\Pi^1_1$-complete.
\end{theorem}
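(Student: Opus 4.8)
The plan is to prove the two halves of completeness separately. For the upper bound I will exhibit the set as a continuous preimage of a set already known to be $\Pi^1_1$; for hardness I will reuse the computable reduction $T\mapsto (F_T',F_T)$ of Lemma~\ref{lem:19}.

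\emph{Upper bound.} Recall from the corollary to Theorem~\ref{thm:7} that $ACG_\ast\subseteq C(I)$ is $\Pi^1_1$. The cleanest route is to invoke Walsh's theorem that the graph of (narrow) Denjoy integration is a $\Pi^1_1$ subset of $M(I)\times C(I)$: since any two functions in $ACG_\ast$ with the same a.e.\ derivative differ by a constant, the set in question is precisely the preimage of that graph under the continuous map $(f,F)\mapsto (f, F-F(0))$, and hence $\Pi^1_1$. Alternatively, one argues directly that ``$F'=f$ a.e.'' is Borel: the upper and lower Dini derivatives of $F$ are obtained as a pointwise $\limsup$ and $\liminf$ of difference quotients of the continuous function $F$ over rational increments, hence are Borel functions of $(x,F)$, so the differentiability set $D_F$ of $F$ and the value of $F'$ on $D_F$ depend Borel-measurably on $F$; then ``$F'=f$ a.e.'' asserts that for every $k$ a certain Borel-in-$(f,F)$ subset of $I$ has Lebesgue measure $0$, which is a Borel condition. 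Either way, the set in question is an intersection of the $\Pi^1_1$ set $\{F\in ACG_\ast\}$ with a Borel set.

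\emph{Hardness.} I will show that $T\mapsto (F_T',F_T)$, which is computable by Lemma~\ref{lem:19}, is a computable reduction from $WF$ to the set in question. Suppose $T\in WF$. If $T=\emptyset$, then $F_T\equiv 0$ is absolutely continuous and so lies in $ACG_\ast$; if $T\neq\emptyset$, then $|T|_{ls}\geq 1$, and the proposition establishing $F_T\in ACG_\ast$ for well-founded $T$ with $|T|_{ls}\geq 1$ applies. In either case $F_T'=F_T'$ a.e.\ by the very meaning of the notation in Lemma~\ref{lem:19}, where $F_T'\in M(I)$ denotes the a.e.\ derivative of $F_T$; hence $(F_T',F_T)$ is in the set. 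Conversely, if $T\notin WF$, then $F_T\notin VBG$ by Proposition~\ref{prop:14}, and since $ACG_\ast\subseteq VBG$ we get $F_T\notin ACG_\ast$, so $(F_T',F_T)$ fails the first conjunct and is not in the set. As $WF$ is $\Pi^1_1$-complete, the set in question is $\Pi^1_1$-complete.

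Nearly all of the work has already been done: Lemma~\ref{lem:19} supplies the computability of $T\mapsto (F_T',F_T)$, and the two propositions establishing $F_T\in ACG_\ast$ for well-founded $T$ and $F_T\notin VBG$ otherwise make the correctness of the hardness reduction immediate; the only bookkeeping beyond this is the trivial $T=\emptyset$ case. If one prefers the self-contained upper bound to citing Walsh, the delicate point is the Borelness of ``$F'=f$ a.e.'': this is routine but requires some care with the Borel hierarchy and with measurable dependence of sections' measures on parameters, since it is the one place where one reasons about differentiability of an arbitrary $F\in C(I)$ rather than of the specially constructed $F_T$.
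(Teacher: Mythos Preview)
Your proposal is correct and matches the paper's approach: both take the $\Pi^1_1$ upper bound from Walsh and establish hardness via the computable map $T\mapsto (F_T',F_T)$ of Lemma~\ref{lem:19}, using Proposition~\ref{prop:14} and the proposition giving $F_T\in ACG_\ast$ for well-founded $T$. Your write-up is slightly more detailed than the paper's---you make explicit the continuous preimage under $(f,F)\mapsto(f,F-F(0))$ needed to pass from Walsh's normalized graph to the unnormalized set, you handle the $T=\emptyset$ case separately, and you sketch an alternative self-contained Borel argument for the condition ``$F'=f$ a.e.''---but none of this diverges from the paper's line.
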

\begin{proof}
The map 
$T \mapsto (F_T', F_T)$ provides a computable reduction from $WF$ to 
$\{(f,F) \in M(I)\times C(I) : F\in ACG_\ast \text{ and } F'=f \text{ a.e.}\}$.
\end{proof}

The next and final result of this section
concerns $\{f \in M(I) : f \text{ is 
Denjoy integrable}\}$.  Walsh showed that this set is $\mathbf 
\Sigma^1_2$ and not $\mathbf \Sigma^1_1$, and asked for better bounds, 
which we give in Theorem \ref{thm:MI}.  We need a lemma.

\begin{lemma}\label{lem:D_integrable}
If $T \not\in WF$, then $F_T'$ is not Denjoy integrable. 
\end{lemma}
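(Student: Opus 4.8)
The plan is to show that if $T \not\in WF$, then $F_T'$ fails to be Denjoy integrable by contradiction. Recall that Denjoy integration is exactly the process which recovers a function in $ACG_\ast$ from its derivative, and that the indefinite Denjoy integral of $f$, when it exists, is the (unique up to an additive constant) function $G \in ACG_\ast$ with $G' = f$ a.e. So suppose toward a contradiction that $F_T'$ is Denjoy integrable, with indefinite integral $G \in ACG_\ast$. Since $F_T' = G'$ a.e., the function $H := F_T - G$ satisfies $H' = 0$ a.e. The key point to exploit is that $G \in ACG_\ast \subseteq VBG$, so $G$ in particular satisfies Lusin's condition $(N)$: indeed $ACG_\ast$ functions satisfy $(N)$, as noted in the excerpt. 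So the bad behavior I want to locate must come entirely from $F_T$, which by Proposition~\ref{prop:14} is not even $VBG$ when $T \not\in WF$.

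First I would make precise how $F_T$ violates $(N)$, or more accurately, how the wiggles of $F_T$ along an infinite path $Z$ through $T$ force the image of a null set to be large. Fix a path $Z \in [T]$ and let $P_Z = \bigcap_\ell (\bigcup \mathcal{C}^I_{Z\uhr\ell})$ as in the proof of Proposition~\ref{prop:14}. This is a closed set; moreover each level-$\ell$ stage is covered by intervals of total length at most $2^{-\ell}$ (since $|H| < 2^{-|\sigma|}$ for $H \in \mathcal{C}^I_\sigma$, a fact already recorded), so $P_Z$ is Lebesgue null. The crucial geometric feature is that on each interval $H \in \mathcal{C}^I_{Z\uhr\ell}$, the summand $W(H)$ contributes variation at least $1$, and by the argument in Proposition~\ref{prop:14} there are points of $P_Z$ inside $H$ at which all the higher-level wiggles vanish — so the oscillation of $F_T$ on $P_Z \cap H$ is at least $1$. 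Since $F_T' = 0$ a.e.\ on $P_Z$ (being a null set), and $F_T$ restricted near $P_Z$ has unbounded variation in every neighborhood of appropriate points, I want to conclude that $F_T$ does not satisfy $(N)$: the image $F_T(P_Z)$ contains an interval of positive length. Concretely, for each $\ell$, the values $\{F_T(\min(P_Z \cap H)) : H \in \mathcal{C}^I_{Z\uhr\ell}\}$ are spread out by at least $1$ within the image of the parent interval, and passing to the limit one sees that $F_T(P_Z \cap H_0)$ is dense in, hence equal to, a nondegenerate interval for a suitable $H_0$.

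Once I have that $F_T(P_Z)$ contains an interval $[p,q]$ with $q > p$, the contradiction is quick. Since $P_Z$ is null and $G$ satisfies $(N)$, we have $G(P_Z)$ null, so $G(P_Z)$ cannot cover $[p,q]$. But $H = F_T - G$ satisfies $H' = 0$ a.e.; however, $H' = 0$ a.e.\ alone is not enough to control $H(P_Z)$ (think of the Cantor function), so I cannot directly argue from differentiability. Instead I use that $G \in ACG_\ast$, hence $G \in VBG_\ast$, whereas the right decomposition argument shows $F_T = G + H$ would then force $F_T$ to be $VBG$ on a suitable closed set, contradicting Proposition~\ref{prop:14}. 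More carefully: if $F_T'$ were Denjoy integrable then $F_T$ and $G$ would differ by a constant (uniqueness of the indefinite Denjoy integral up to a constant, valid because $F_T \in ACG_\ast$ would follow), giving $F_T = G + c \in ACG_\ast \subseteq VBG$, directly contradicting Proposition~\ref{prop:14}. So the cleanest route is: Denjoy integrability of $F_T'$ would make $F_T$ the indefinite integral (as $F_T$ is continuous and, were it in $ACG_\ast$, the integral would recover it), forcing $F_T \in ACG_\ast \subseteq VBG$.

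The main obstacle is the last step: pinning down exactly why Denjoy integrability of $f = F_T'$ forces $F_T$ itself (rather than some other primitive $G$) to lie in $ACG_\ast$. The subtlety is that two continuous functions with the same a.e.\ derivative need not differ by a constant in general — one must use that $G \in ACG_\ast$ and invoke the uniqueness theorem for the Denjoy integral, which says precisely that any two $ACG_\ast$ primitives of $f$ differ by a constant. That still leaves a gap, since a priori $F_T$ might not be an $ACG_\ast$ primitive at all. So the honest argument is: $G \in ACG_\ast$ with $G' = F_T'$ a.e.; then $F_T - G$ is continuous with vanishing a.e.\ derivative. If additionally $F_T - G$ were $VBG$, then it would be constant (a $VBG$ function with a.e.\ zero derivative is constant, by the classical theory — this is where $VBG$, not just continuity, is essential). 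But $G \in VBG$ and $F_T \notin VBG$, so $F_T - G \notin VBG$ — yet I need to rule out that $F_T - G$ is a non-$VBG$ function with a.e.\ zero derivative, which is exactly the situation I want to exclude. The resolution: a non-$VBG$ function with a.e.\ zero derivative fails $(N)$ in a strong way, and combined with $G$ satisfying $(N)$ this propagates to make $F_T = (F_T - G) + G$ fail $(N)$; but failing $(N)$ is consistent with being in $VBG$, so instead I directly use that $F_T \notin VBG$ means no continuous primitive $G$ of $F_T'$ lying in $ACG_\ast \subseteq VBG$ can satisfy $F_T - G$ constant, hence $F_T'$ has no $ACG_\ast$ primitive, hence is not Denjoy integrable. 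Making this chain airtight — especially the step that a $VBG$ function with a.e.-zero derivative is constant, and that therefore no such $G$ exists — is the real content, and I expect it to require citing the relevant structural theorems from Saks.
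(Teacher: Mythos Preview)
There is a genuine gap. Your argument repeatedly circles back to the hope that Denjoy integrability of $F_T'$ forces $F_T$ itself into $ACG_\ast$ (or at least forces $F_T - G$ to be constant), but you never close this. In fact your proposed closing step is false: a $VBG$ function with a.e.\ zero derivative need \emph{not} be constant --- the Cantor staircase is $VB$ on $I$ with derivative zero a.e. The correct uniqueness statement requires both primitives to lie in $ACG_\ast$, and $F_T$ does not, so no global comparison of $F_T$ with $G$ is available. Your $(N)$-based detour also stalls: even if $F_T$ fails $(N)$ on $P_Z$, this says nothing about whether a \emph{different} $ACG_\ast$ primitive $G$ of $F_T'$ exists.

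The missing idea is localization. The paper defines $P$ using \emph{all} ill-founded nodes (not just one path $Z$), and observes that on each closed interval $J$ disjoint from $P$, the function $F_T$ agrees with $F_S$ for a well-founded tree $S$, hence $F_T\uhr J \in ACG_\ast$. Now on each connected component $(c,d)$ of $I\setminus P$, both $F_T$ and the hypothetical $G$ are $ACG_\ast$ with the same a.e.\ derivative, so by uniqueness of the Denjoy integral they differ by a constant there, giving $G(d)-G(c)=F_T(d)-F_T(c)$. The unbounded-variation witnesses for $F_T$ on portions of $P$ use exactly such endpoint pairs, so the same witnesses show $G$ has unbounded variation on every portion of $P$, contradicting $G\in ACG_\ast\subseteq VBG$. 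The key step you are missing is proving that $F_T$ is $ACG_\ast$ off $P$; without it, there is no way to pin $G$ to $F_T$ anywhere.
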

\begin{proof}
Let
$P = \cap_{\ell \in \omega} \overline{ \cup D_\ell},$ where
$$D_\ell 
= \{ H : H \in \mathcal C^I_\sigma, \sigma \in T, |\sigma| = \ell 
\text{ and } T_{\sigma} \not\in WF\}$$
Then $F_T$ is not $VB$ on $P \cap U$ for any open interval $U$ 
such that $P\cap U \neq \emptyset$, by a modification of 
the argument in Proposition \ref{prop:14}.  Let $H_0 \subseteq U$ 
with $H_0 \in D_M$ for some $M$, and let $H\subseteq H_0$ 
with $H\in D_{M+N}$.  There are at least $2^N$ such $H$, all disjoint. 
Then as before, the variation of $F_T$ on each such $H$ is 
at least 1. Furthermore, if $\flat(H) = \{K_0,K_1,\dots, K_r\}$,
listed from left to right, then this variation is witnessed by 
the intervals $(\max (K_i\cap P), \min (K_{i+1}\cap P))$ 
for each $i<r$, and these intervals are disjoint from $P$.

We claim that 
the restriction 
of $F_T$ to any closed interval $J$ disjoint from $P$
is $ACG_\ast$.  Such an interval is disjoint from $\cup D_\ell$ 
for some $\ell$, and therefore $F_T\uhr J = F_S \uhr J$, 
where $S$ is the well-founded tree obtained from $T$ by 
removing all $\sigma$ \august{of length at least $\ell$}
such that $T_{\sigma \uhr \ell} \not\in WF$.
Because $F_S$ is $ACG_\ast$, $F_T \uhr J$ is $ACG_\ast$.

Now suppose, for the sake of contradiction, that there is $G \in ACG_\ast$ 
with $G' = F_T'$.  Then $G$ would be $VB$ on some portion of $P$. 
But for each connected component $(c,d)$ of $I \setminus P$, 
we have $G(d) - G(c) = F_T(d) - F_T(c)$ (because, using the continuity 
 of $F_T$ and $G$, both are equal to the limit as $J$ expands to 
$(c,d)$ of the Denjoy integral of $F_T'\uhr J$), and $F_T$ is not $VB$ on 
any portion of $P$, via a sequence that uses such intervals $(c,d)$ 
as witnesses.  Such a sequence also witnesses that $G$ has 
variation at least $2^N$ on $P \cap U$.
\end{proof}

\august{We also need a slight variation on the result, originally 
due to Ajtai, that for
sequences $\overline f \in C(I)^\omega$, 
if $\overline f$ converges to $f$ and $f$ is the derivative 
of an everywhere differentiable function $F$, then 
$F \in \Delta^1_1(\overline f)$. } 

\begin{theorem}[essentially Ajtai, unpublished]\label{ajtai}
\august{If $f \in M(I)$ and $F \in ACG_\ast$ with $F' = f$ a.e., then $F \in \Delta^1_1(f)$.}
\end{theorem}
\begin{proof}
\august{We refer the reader to Dougherty and Kechris }\cite[pg. 162]{dk} 
\august{for 
the proof of Ajtai's theorem; we only need to check that the proof 
still works if the function $f$ is 
given as a  
measurable function (i.e as a Cauchy sequence in the 
metric space $M(I)$) that is Denjoy integrable, 
rather than as an element of $C(I)^\omega$
which must be 
the derivative of an everywhere differentiable function.

For the first modification, it suffices to verify that 
it is uniformly $\Delta^1_1$
to determine whether a given function $f \in M(I)$ is
Lebesgue integrable, and if so, what is $\int f$.  
Recall that a 
non-negative function 
$f\in M(I)$ is Lebesgue integrable if and only if the sequence 
$\langle f_n \rangle_{n\in \omega}$ is Cauchy in the $||\cdot||_1$ 
norm, where $f_n(x) = \min(n,f(x))$.  The operation $||\cdot||_1$ 
is computable on bounded measurable functions when the bound is 
also given as an input.  Therefore, it is uniformly $\Delta^1_1$ 
to determine whether an arbitrary given 
$f$ is integrable, by applying 
the above criterion to $|f|$.  If $f$ is integrable, we may calculate
$\int f = \lim_n \int f^n$, where 
$f^n(x) = \max(-n, \min(n, f(x)))$.  
Therefore, all the operations 
needed to carry out the Denjoy totalization process on $f$ 
are uniformly $\Delta^1_1$.

The second difference in the theorem statement is that we only assume 
$F \in ACG_\ast$, rather than $F$ being everywhere differentiable. 
However, the only place it was used that $F$ is everywhere 
differentiable (aside from making $f$ everywhere defined in its  
representation as a Baire 1 function) is the assumption that the 
totalization process does terminate.  }
\end{proof}

\begin{theorem}\label{thm:MI} The set $\{f \in M(I) : f \text{ is Denjoy integrable }\}$ 
is $\Pi^1_1$-complete.
\end{theorem}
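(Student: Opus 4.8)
The plan is to establish $\Pi^1_1$-completeness of $\{f \in M(I) : f \text{ is Denjoy integrable}\}$ by exhibiting a computable reduction from $WF$.  The natural candidate is the map $T \mapsto F_T'$, already shown to be computable into $M(I)$ by Lemma \ref{lem:19}.  Two things must be checked: that if $T \in WF$ then $F_T'$ is Denjoy integrable, and that if $T \notin WF$ then it is not.  The second direction is exactly Lemma \ref{lem:D_integrable}, so it remains only to verify the first.

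First I would argue that for $T \in WF$, the function $F_T = F(T,I)$ is in $ACG_\ast$ and everywhere differentiable where it matters: by the proposition preceding Corollary 18, $F_T \in ACG_\ast$ whenever $|T|_{ls} \geq 1$, and the degenerate cases ($T = \emptyset$, giving $F_T \equiv 0$) are trivial.  Since $ACG_\ast$ is precisely the class of functions recoverable from their derivatives by narrow Denjoy totalization, $F_T \in ACG_\ast$ together with $F_T' = f$ a.e.\ (Lemma \ref{lem:19}) shows that $f = F_T'$ is Denjoy integrable, with $\int_I F_T'\, dx = F_T(1) - F_T(0)$.  Thus $T \in WF$ implies $F_T' \in \{f : f \text{ Denjoy integrable}\}$.

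Conversely, for $T \notin WF$, Lemma \ref{lem:D_integrable} gives that $F_T'$ is not Denjoy integrable (a fact one proves by showing that no $ACG_\ast$ primitive of $F_T'$ can exist, since such a primitive would be forced to agree with $F_T$ up to the endpoints of the components of $I \setminus P$, while $F_T$ fails to be $VB$ on any portion of $P$).  Hence $T \mapsto F_T'$ is a computable reduction from $WF$ to the set of Denjoy integrable functions.  Combined with Walsh's observation that this set is $\Pi^1_1$ (or a direct argument: $f$ is Denjoy integrable iff there exists $F \in ACG_\ast$ with $F' = f$ a.e., and one can eliminate the second-order quantifier using that $ACG_\ast$ and the a.e.-derivative relation are suitably arithmetical over the $\Pi^1_1$ predicate, so the whole set is $\Pi^1_1$), we conclude that the set is $\Pi^1_1$-complete.

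I do not expect a serious obstacle here: the construction and its two key lemmas are already in place, and the only content of the proof is assembling them.  If anything needs care, it is confirming that the set of Denjoy integrable functions is genuinely $\Pi^1_1$ (the upper bound) rather than merely $\mathbf\Sigma^1_2$ — but this is attributed to Walsh in the text, or follows from projecting the $\Pi^1_1$ set of item (2) of Theorem \ref{thm:2} along the $C(I)$ coordinate, which a priori gives $\Sigma^1_2$; to get $\Pi^1_1$ one instead uses the characterization via $ACG_\ast$ primitives and the fact that such a primitive, if it exists, is unique up to an additive constant and computable from $f$, collapsing the existential second-order quantifier.
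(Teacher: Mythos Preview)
Your completeness (hardness) direction is exactly the paper's argument: the computable map $T\mapsto F_T'$ from Lemma~\ref{lem:19}, together with $F_T\in ACG_\ast$ for $T\in WF$ and Lemma~\ref{lem:D_integrable} for $T\notin WF$.

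The upper bound, however, has a real gap. First, your attribution is wrong: Walsh did \emph{not} show the set is $\Pi^1_1$; the text explicitly says he showed only $\mathbf\Sigma^1_2$ and not $\mathbf\Sigma^1_1$, and asked for better bounds. The $\Pi^1_1$ upper bound is new here. Second, your own sketch --- ``the primitive is unique up to an additive constant and computable from $f$, collapsing the existential second-order quantifier'' --- has the right shape but the wrong content. The Denjoy primitive is \emph{not} in general computable from $f$; if it were, the set would be arithmetical, not merely $\Pi^1_1$. What the paper actually uses is that the primitive $F$ is $\Delta^1_1(f)$, i.e.\ hyperarithmetic in $f$, which is precisely strong enough to make $\exists F\in\Delta^1_1(f)\,[F'=f\text{ a.e.}]$ a $\Pi^1_1$ condition. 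This fact is nontrivial: the paper invokes an argument of Ajtai (presented in Dougherty--Kechris~\cite{dk}) originally proved for everywhere-differentiable $F$ and observes that it goes through verbatim for $F\in ACG_\ast$ with $f\in M(I)$. You have identified the right mechanism (uniqueness plus definability of the primitive collapses the quantifier) but you need the hyperarithmetic bound on $F$, not a computable one, and you need to supply or cite a reason for it.
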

\begin{proof}
By Theorem \ref{ajtai}, for $f \in M(I)$,
$$f \text{ is Denjoy integrable} \iff \exists F \in \Delta^1_1(f)[{\mathaugust F \in ACG_\ast \text{ and }} F'=f \text{ a.e.}]$$
which is $\Pi^1_1$.

Now consider the completeness direction.
By Lemma \ref{lem:19}, the map $T\mapsto F_T'$ is computable, 
and $T \in WF$ if and only if $F_T'$ is Denjoy integrable 
by Lemma \ref{lem:D_integrable}.
\end{proof}

\section{Descriptive results on the Denjoy hierarchies}\label{sec:descriptions}

\subsection{Descriptive complexity of $VBG$ and $VBG_\ast$ hierarchies}
Our goal now is to give the precise descriptive complexity of 
the sets $VBG_{\ast\alpha}, VBG_\alpha$ and $ACG_{\ast\alpha}$. 
For $ACG_\alpha$, we will only give an upper bound on the descriptive
complexity.

\begin{prop}\label{VBGsigma2alpha}
If $\alpha < \omega_1^Y$, then $VBG_{\ast\alpha}$ and $VBG_\alpha$ are
$\Sigma_{2\alpha}(Y)$.
\end{prop}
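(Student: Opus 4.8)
The plan is to prove the bound by effective transfinite recursion on $\alpha$, closely mirroring the structure of the derivation process $P^\alpha_{F,X}$ for $X = VB$ or $X = VB_\ast$. The key observation driving the argument is the characterization from Theorem~\ref{thm:7}: $P^{\alpha+1}_{F,X} = \emptyset$ iff for every closed $E \subseteq P^\alpha_{F,X}$ there is an interval $[a,b]$ meeting $E$ on which $F$ is $X$. Since the derivation is defined by removing from $P^\alpha_{F,X}$ all open intervals $(a,b)$ on which $F$ is $X$ on $[a,b] \cap P^\alpha_{F,X}$, and since $P^{\alpha}_{F,X}$ is a closed set uniformly presentable (as a $\Pi^0_{2\alpha}(Y)$-approximable set) from $F$ once we know its defining complexity, the main step is to show that ``$F$ is $VB$ (resp. $VB_\ast$) on $[a,b] \cap C$'' for a closed set $C$ given by a $\Sigma^0_{2\alpha}(Y)$ membership condition is itself not too complex, and that the existential quantifier over rational intervals $[a,b]$ together with the covering condition collapses appropriately.

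First I would set up the base case: $P^1_{F,X} = \emptyset$ means $F$ is $VB$ (resp. $VB_\ast$) on all of $I$, which unwinds to $\exists N$ (a $\Sigma^0_1$ existential) followed by a statement quantifying over all non-decreasing finite rational sequences $a_0, b_0, \dots, a_k, b_k$ asserting $\sum_i |F(b_i) - F(a_i)| < N$ (resp. $\sum_i \omega(F, a_i, b_i) < N$); since $F$ is computable from its presentation, $|F(b_i)-F(a_i)| < N$ is $\Sigma^0_1$ and $\omega(F,a_i,b_i) < N$ is $\Pi^0_1$ (a sup over the interval), so the whole statement is $\Sigma^0_2(Y)$, matching $2\alpha = 2$. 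Then I would do the successor step: assuming $P^\alpha_{F,X} = \emptyset$ is $\Sigma^0_{2\alpha}(Y)$ uniformly in $\alpha$ (and, more to the point, that the predicate ``$x \notin P^\alpha_{F,X}$'' — i.e., ``$x$ was removed at some stage $\le \alpha$'' — is $\Sigma^0_{2\alpha}(Y)$ uniformly, so $P^\alpha_{F,X}$ is a $\Pi^0_{2\alpha}(Y)$-closed set), I express $P^{\alpha+1}_{F,X} = \emptyset$ as: there is a finite set of rational intervals $[a_1,b_1], \dots, [a_m,b_m]$ covering $I$ such that each $F$ is $X$ on $[a_j, b_j] \cap P^\alpha_{F,X}$. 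The clause ``$F$ is $X$ on $[a_j,b_j] \cap P^\alpha_{F,X}$'' unwinds to $\exists N \forall(\text{sequences in }[a_j,b_j])$ [if all the $a_i, b_i$ are in $P^\alpha_{F,X}$ then $\sum \dots < N$]; the hypothesis ``$a_i \in P^\alpha_{F,X}$'' is $\Pi^0_{2\alpha}(Y)$, so the implication is $\Sigma^0_{2\alpha}(Y)$ (in the $VB$ case; in the $VB_\ast$ case the consequent $\sum_i \omega(F,a_i,b_i) < N$ is $\Pi^0_1$, still absorbed), the universal over sequences keeps it $\Pi^0_{2\alpha+1}(Y)$ after the inner $\exists N$... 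I need to be careful here: $\exists N \forall (\text{seq}) [\Pi^0_{2\alpha} \to \Sigma^0_1]$ is $\exists N \forall(\text{seq})[\Sigma^0_{2\alpha}]$ which is $\Sigma^0_{2\alpha+1}(Y)$ at worst, and then the outer finite $\exists$ over rational interval covers keeps us $\Sigma^0_{2\alpha+1}(Y) \subseteq \Sigma^0_{2\alpha+2}(Y)$. Actually the covering-of-$I$ reformulation lets me replace the genuine derivation by its effect, and I should double-check the pointwise version ``$x \notin P^{\alpha+1}_{F,X}$'' is $\Sigma^0_{2\alpha+2}(Y)$ uniformly, which it is by the same unwinding with $[a,b] \ni x$.

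For the limit step, $P^\lambda_{F,X} = \bigcap_{\beta < \lambda} P^\beta_{F,X}$, so $P^\lambda_{F,X} = \emptyset$ iff, by compactness, $\exists \beta < \lambda\, [P^\beta_{F,X} = \emptyset]$; using a $Y$-effective sequence $\beta_n \to \lambda$ and the uniform inductive hypothesis that $P^{\beta_n}_{F,X} = \emptyset$ is $\Sigma^0_{2\beta_n}(Y)$, this is a $\Sigma^0_\lambda(Y)$ statement, and $\Sigma^0_\lambda(Y) = \Sigma^0_{2\lambda}(Y)$ for limit $\lambda$, exactly as in the Cantor–Bendixson proposition in Section~\ref{sec:cantorBendixson}. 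The whole recursion is packaged via effective transfinite recursion on a notation $a \in \kO^Y$ with $|a|_\kO^Y = 2\alpha$, reading off the complexity index uniformly at each stage. The main obstacle I anticipate is bookkeeping the uniformity: I must carry through the recursion not just ``$P^\alpha_{F,X} = \emptyset$ is $\Sigma^0_{2\alpha}(Y)$'' but the stronger statement that the membership relation $x \notin P^\alpha_{F,X}$ is $\Sigma^0_{2\alpha}(Y)$ \emph{uniformly in $\alpha$ and $x$ and $F$}, since the successor step needs to quantify over the closed set $P^\alpha_{F,X}$ as a hypothesis inside the $VB$-condition; getting the quantifier arithmetic to land on $2\alpha + 2$ rather than overshooting, and confirming that the $VB_\ast$ oscillation terms (which are $\Pi^0_1$, not $\Sigma^0_1$) do not push the complexity up, are the places requiring the most care.
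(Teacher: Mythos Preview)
Your overall scaffold---effective transfinite recursion, the $\Sigma^0_2$ base case, and the compactness-based limit step---matches the paper exactly. The divergence is in the successor step, and there your direct unwinding has a genuine gap.

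You write the clause ``$F$ is $X$ on $[a_j,b_j]\cap P^\alpha_{F,X}$'' as
\[
\exists N\ \forall(\text{sequences in }[a_j,b_j])\ \bigl[\text{all }a_i,b_i\in P^\alpha_{F,X}\ \Rightarrow\ \textstyle\sum_i\cdots<N\bigr],
\]
treating ``$a_i\in P^\alpha_{F,X}$'' as a $\Pi^0_{2\alpha}(Y)$ hypothesis. But what are the $a_i,b_i$ ranging over? If over rationals, the hypothesis may be vacuously false---$P^\alpha_{F,X}$ can be a perfect set containing no rationals---so the implication is vacuously true and you would declare $F$ to be $VB$ on $P^\alpha_{F,X}$ with any bound $N$, which is wrong. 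If over reals, you have introduced a second-order quantifier and are no longer inside $\Sigma^0_{2\alpha+2}(Y)$. Either way the unwinding as stated does not give an arithmetical predicate equivalent to ``$F$ is $VB$ on $[a_j,b_j]\cap P^\alpha_{F,X}$''.

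The paper handles this by a different device: Proposition~\ref{prop:linearized} says $F$ is $VB$ (resp.\ $VB_\ast$) on $E$ iff the linearized function $F_E$ (resp.\ $F_{E,\ast}$) is $VB$ on all of $I$. The paper then shows that $F_{P^\alpha}$ and $F_{P^\alpha,\ast}$ can be approximated to within any $\eps$ using an oracle for the $\Sigma^0_{2\alpha}(Y)$ questions ``$P^\alpha_{F,X}\cap[a,b]=\emptyset$?''. Once you have an oracle-computable continuous function on $I$, ``$VB$ on $[p',q']$'' is $\Sigma^0_2$ relative to that oracle via rational test sequences (which now suffice, since the function is continuous on the whole interval), giving $\Sigma^0_{2\alpha+2}(Y)$. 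Your approach can be repaired---quantify over rational sequences \emph{close to} $P^\alpha_{F,X}$ and use the modulus of uniform continuity of $F$ to transfer the variation bound---but you did not say this, and it is exactly the content that the linearization trick packages cleanly. (Minor aside: for continuous $F$ on a closed interval, $\omega(F,a,b)$ is a computable real, so ``$\omega(F,a,b)<N$'' is $\Sigma^0_1$, not $\Pi^0_1$; this does not affect the final count.)
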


\begin{proof}
By effective transfinite recursion.  Let $X$ be one of $VB, VB_\ast$, 
and let $p,q \in \mathbb Q$.
When $\alpha = 1$, we have $P^1_{F,X} \cap (p,q) = \emptyset$ if and only 
if $F$ has bounded variation on the 
interval $[p,q]$.  This property  is $\Sigma^0_2$
 uniformly in $F$, $p$ and $q$.

Supposing that $\{(F,p,q) : [p,q]  \cap P^\alpha_{F,X} = \emptyset\}$ is 
$\Sigma^0_{2\alpha}(Y)$ uniformly in $\alpha$, let us show 
$\{ (F, p,q): [p,q] \cap P^{\alpha+1}_{F,X} = \emptyset\}$ is 
$\Sigma_{2\alpha+2}(Y)$ uniformly in $\alpha$.
We have by Proposition \ref{prop:linearized},
\begin{align*} P^{\alpha+1}_{F,VB} \cap [p,q] = \emptyset &\iff 
\exists [p',q']\supsetneq [p,q] \text{ such that } F_{P^\alpha} 
\text{ is $VB$ on } [p',q'] \\
P^{\alpha+1}_{F,VB_\ast} \cap [p,q] = \emptyset &\iff 
\exists [p',q'] \supsetneq [p,q] \text{ such that } F_{E,\ast} 
\text{ is $VB$ on } [p',q'], \text{ where } E= P^\alpha.
\end{align*}

So it is sufficient to show that 
there is a uniform procedure for computing, given $\eps$, approximations
to $F_{P^\alpha}$ and $F_{P^\alpha,\ast}$ that are correct to within $\eps$, 
using an oracle which can answer $\Sigma^0_{2\alpha}(F,Y)$ questions,
for example questions of the form
``$P^{\alpha}_{F,X} \cap [a,b] = \emptyset$?''

Using $F$ and the compactness of $I$, compute a number $N$ large 
enough that $$\omega(F,[k/2^N, (k+1)/2^N]) < \eps$$ for all $k<2^N$.
For each $\ell<k\leq 2^N$, ask whether 
$P^{\alpha}_{F,X} \cap [\ell/2^N, k/2^N] = \emptyset$.  This identifies 
(up to an error of $1/2^N$ on each side) the connected components 
of $I\setminus P^{\alpha}_{F,X}$.  
Let $E$ be the corresponding approximation to $P^{\alpha}_{F,X}$; 
that is, 
$$E = I \setminus \cup \{(\ell/2^N, k/2^N) : 
P^{\alpha}_{F,X} \cap [\ell/2^N, k/2^N] = \emptyset\}.$$
Let $F^\eps$ be an approximation of $F$ 
correct to within $\eps$. Then return $F^\eps_{E}$ or $F^\eps_{E,\ast}$ 
as appropriate.  The returned function will be correct to 
within $\eps$ on $E$; on components of $I\setminus E$ that 
were too small to find, $F^\eps_{E,\ast}$ could be off by at 
most $4\eps$; on components of $I\setminus E$ whose 
endpoints were only approximated, $F^\eps_{E,\ast}$ could be 
off by at most $5\eps$.  The errors for $F^\eps_E$ are less.

Therefore, by induction,
$\{(F,p,q) : P^{\alpha+1}_{F,X} \cap [p,q] = \emptyset\}$ 
is $\Sigma^0_{2\alpha+2}(Y)$, uniformly in $\alpha$.

Finally, if a limit ordinal $\lambda$ is given as a $Y$-effective sequence 
$\alpha_n$ with $\lim_{n\rightarrow\infty} \alpha_n = \lambda$, then  
$[p,q]\cap P^\lambda_{F,X} = \emptyset 
\iff \exists n [p,q]\cap P^{\alpha_n} = \emptyset$.  
Since the statements $[p,q]\cap P^{\alpha_n}_{F,X}$ are 
uniformly $\Sigma^0_{2\alpha_n}(Y)$, we have 
$\{(F,p,q) : [p,q]\cap P^\lambda_{F,X} = \emptyset\}$ is
$\Sigma^0_{\lambda}(Y) = \Sigma^0_{2\lambda}(Y)$, uniformly 
in $\lambda$.
\end{proof}

Observe that in the proof of Proposition \ref{VBGsigma2alpha}, 
the only place it is used that $X$ is $VB$ or $VB_\ast$, 
rather than $AC$ or $AC_\ast$, is in counting the quantifiers 
of bounded variation in the successor step, to conclude that 
two jumps on an oracle for ``$P^\alpha_{F,X} \cap [a,b] = \emptyset$?'' 
questions would suffice to answer $P^{\alpha+1}_{F,X} \cap [a,b] = \emptyset$ 
questions.  If $X$ were $AC$ or $AC_\ast$, this argument gives 
a bound of 4 jumps.  In the case of $AC_\ast$, in the next section 
we will improve the bound to 2 jumps.  

\begin{cor}
If $\alpha<\omega_1^Y$, $ACG_{\ast\alpha}$ and $ACG_\alpha$ are 
$\Sigma^0_{4\alpha}(Y)$.
\end{cor}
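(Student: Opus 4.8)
The plan is to re-run the effective transfinite recursion of the proof of Proposition~\ref{VBGsigma2alpha}, but with $X$ one of $AC, AC_\ast$ in place of $VB, VB_\ast$, making precise the remark preceding the corollary that the only change is in the quantifier count at the successor stage: ``absolutely continuous on an interval'' is a two-quantifier-more predicate than ``bounded variation on an interval'', so each successor step now costs four jumps rather than two, and the induction closes at level $\Sigma^0_{4\alpha}(Y)$ instead of $\Sigma^0_{2\alpha}(Y)$.

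First I would handle the base case. For rationals $p<q$, we have $P^1_{F,X}\cap(p,q)=\emptyset$ iff $F$ is $AC_\ast$ on $[p,q]$, equivalently $AC$ on $[p,q]$ since $[p,q]$ is an interval. Unwinding the definition, this is a statement of the form $\forall\eps\,\exists\delta\,\forall(\text{finite non-decreasing rational sequences})$ whose matrix is $\Pi^0_1$ in $F$, because $\sum_i|F(b_i)-F(a_i)|$ can be approximated from $F$ to arbitrary precision. Hence ``$F$ is $AC$ on $[p,q]$'' is $\Pi^0_3$ uniformly in $F,p,q$, and in particular $\Sigma^0_4(Y)=\Sigma^0_{4\cdot 1}(Y)$, which also settles $ACG_{\ast 1}$ and $ACG_1$.

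For the successor step, assume $\{(F,p,q):[p,q]\cap P^\alpha_{F,X}=\emptyset\}$ is $\Sigma^0_{4\alpha}(Y)$ uniformly in $\alpha$. By Proposition~\ref{prop:linearized}, $P^{\alpha+1}_{F,AC}\cap[p,q]=\emptyset$ iff there are rationals $[p',q']\supsetneq[p,q]$ with $F_{P^\alpha}$ being $AC$ on $[p',q']$, and similarly with $F_{P^\alpha,\ast}$ in the $AC_\ast$ case. Exactly as in Proposition~\ref{VBGsigma2alpha}, an oracle answering the $\Sigma^0_{4\alpha}(F,Y)$ questions ``$P^\alpha_{F,X}\cap[a,b]=\emptyset$?'' lets us compute, uniformly in $\eps$, approximations to $F_{P^\alpha}$ and $F_{P^\alpha,\ast}$ that are correct to within a fixed constant multiple of $\eps$ (the small or only-approximated components of $I\setminus P^\alpha$ introducing the constant, exactly as in the cited proof). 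Relative to such an oracle, the predicate ``$G$ is $AC$ on $[p',q']$'' is $\Pi^0_3$ by the base-case analysis, hence $\Pi^0_{4\alpha+3}(F,Y)\subseteq\Sigma^0_{4\alpha+4}(F,Y)$; the existential quantifier over the rational $[p',q']$ does not raise the level. Since $4\alpha+4=4(\alpha+1)$, this gives $\{(F,p,q):P^{\alpha+1}_{F,X}\cap[p,q]=\emptyset\}\in\Sigma^0_{4(\alpha+1)}(Y)$ uniformly. The limit stage is verbatim that of Proposition~\ref{VBGsigma2alpha}: for a limit $\lambda$ with $Y$-effective approximating sequence $\alpha_n\to\lambda$, $[p,q]\cap P^\lambda_{F,X}=\emptyset$ iff $\exists n\,[p,q]\cap P^{\alpha_n}_{F,X}=\emptyset$, a countable union of uniformly $\Sigma^0_{4\alpha_n}(Y)$ sets, hence $\Sigma^0_\lambda(Y)=\Sigma^0_{4\lambda}(Y)$ using $4\lambda=\lambda$ for limit $\lambda$. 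Taking $p=0,q=1$ then yields $ACG_{\ast\alpha}=\{F:P^\alpha_{F,AC_\ast}\cap[0,1]=\emptyset\}\in\Sigma^0_{4\alpha}(Y)$, and likewise $ACG_\alpha\in\Sigma^0_{4\alpha}(Y)$.

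I do not expect a serious obstacle: the content is that the argument of Proposition~\ref{VBGsigma2alpha} is robust under changing the ambient one-step predicate, so the real work is just (i) correctly counting that ``absolutely continuous on an interval'' sits at $\Pi^0_3$ (two jumps rather than one), and (ii) confirming that the limit-stage arithmetic still closes, i.e. $4\lambda=\lambda$ for limit $\lambda$, so no complexity leaks across limit levels. The one mildly delicate point, inherited unchanged from the cited proposition, is that the computed approximants to $F_{P^\alpha}$ and $F_{P^\alpha,\ast}$ are only accurate up to a constant factor times $\eps$ on the components of $I\setminus P^\alpha$ that are too small or whose endpoints are only located approximately; since $\eps>0$ is arbitrary, this is harmless when these approximants are fed into the $\Pi^0_3$ absolute-continuity test.
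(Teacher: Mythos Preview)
Your proposal is correct and follows exactly the approach the paper intends: the corollary in the paper is stated without a separate proof and is meant to follow immediately from the observation (stated just before it) that the only place the argument of Proposition~\ref{VBGsigma2alpha} uses $X\in\{VB,VB_\ast\}$ is in the quantifier count at the successor step, and that replacing bounded variation by absolute continuity costs four jumps instead of two. Your write-up simply spells this out in detail, including the base case ($\Pi^0_3$ hence $\Sigma^0_4$), the successor arithmetic $4\alpha+4=4(\alpha+1)$, and the limit-stage identity $4\lambda=\lambda$.
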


\subsection{Descriptive complexity of the $ACG_\ast$ hierarchy}

Next we consider the descriptive complexity of $ACG_\ast$ hierarchy.
Already at the first level there is a difference, because 
$P^1_{F,AC_\ast} = \emptyset$ if and only if $F$ is absolutely continuous, 
a $\Pi^0_3$ property.

\begin{theorem}
The set $\{F\in C(I) : F \text{ is absolutely continuous}\}$ is $\Pi^0_3$-complete.
\end{theorem}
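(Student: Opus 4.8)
The plan is to establish membership in $\Pi^0_3$ and then build a $\Pi^0_3$-hard instance inside the absolutely continuous functions. For the upper bound, recall that $F$ is absolutely continuous iff for every $\eps > 0$ there is $\delta > 0$ such that for every finite collection of disjoint intervals $(a_i,b_i)$ with $\sum_i (b_i - a_i) < \delta$, one has $\sum_i |F(b_i) - F(a_i)| < \eps$. Quantifying over rational $\eps, \delta$ and rational-endpoint interval tuples, the innermost matrix $\sum_i |F(b_i) - F(a_i)| < \eps$ is $\Pi^0_1$ in $F$ (a closed condition, since it only involves evaluating the continuous function $F$ at finitely many points and these evaluations are computable from the name of $F$ up to arbitrary precision; the strict inequality after allowing rational slack is $\Sigma^0_1$, but the $\forall$-tuple wrapper pushes it to $\Pi^0_1$ — being careful, $\forall \text{tuples}(\sum |b_i - a_i| < \delta \to \sum |F(b_i)-F(a_i)| < \eps)$ is $\Pi^0_1$). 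Then $\forall \eps \exists \delta (\Pi^0_1)$ is $\Pi^0_3$. I would spell this out just carefully enough to pin the level at exactly $\Pi^0_3$, noting that the $\forall \delta$ collapses to $\exists \delta$ because the condition is monotone in $\delta$.

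For hardness, a canonical $\Pi^0_3$-complete set is $\{x \in 2^\omega : \forall n\, \exists^\infty m\, (x \in W_{n,m})\}$ or more concretely $P_3 = \{x : \forall n\, (\text{column } x^{[n]} \text{ is cofinite or } \ldots)\}$; it is cleanest to use $\{x \in \omega^\omega : \forall n\, \lim_m x(n,m) = 0\}$ or the set of $x$ coding a sequence of r.e.\ sets all of which are cofinite, i.e.\ $\{x : \forall n\, x^{[n]} =^* \omega\}$, which is $\Pi^0_3$-complete. I would reduce such a set to absolute continuity as follows. Partition $I$ into disjoint intervals $I_n$, $n \in \omega$, say $I_n = [2^{-n-1}, 2^{-n}]$, and within each $I_n$ further partition into disjoint subintervals $I_{n,m}$, $m \in \omega$. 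On $I_{n,m}$ place a small bump — a piecewise-linear "tent" of height $h_{n,m}$ and width $|I_{n,m}|$ — where the height is controlled by the oracle: the bump is present (nonzero) only if $x^{[n]}$ has not yet appeared to stabilize by stage $m$, more precisely one arranges that $F$ restricted to $I_n$ is absolutely continuous iff $x^{[n]}$ is cofinite, by making the sum of heights-over-widths ratios (the "slopes") on $I_n$ finite in that case and infinite otherwise. A clean way: make the total variation of $F$ on $I_n$ finite iff $x^{[n]}$ is cofinite, AND ensure $F$ always satisfies Lusin's condition $(N)$, so that by the stated fact $F \in VBG \cap (N) \cap \ldots$ reduces to checking absolute continuity column-by-column, hence $F$ is absolutely continuous iff $\forall n\, (F$ is $AC$ on $I_n) $ iff $\forall n\, x^{[n]} =^* \omega$. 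Using the $W(J)$-style wiggle technology from the preceding sections (or just explicit tents) makes the computability of $x \mapsto F$ transparent, since $F$ is an effective uniform limit of piecewise-linear functions with rational data.

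The main obstacle I expect is the bookkeeping needed so that the constructed $F$ is genuinely absolutely continuous in the "good" case and genuinely not in the "bad" case, while keeping the map computable and keeping Lusin's condition $(N)$ automatic. Condition $(N)$ is easy to secure if every bump is piecewise linear and the set of nondifferentiability points is countable, since piecewise-linear functions trivially satisfy $(N)$ and a uniform limit does too when the "active" portions are controlled — but one must check the limiting function on the accumulation set $\{0\} \cup \{2^{-n} : n\}$ and on the left endpoints of the $I_{n,m}$ doesn't accidentally blow up a null set; choosing the heights $h_{n,m} \to 0$ fast (e.g.\ $h_{n,m} \leq 2^{-n-m}$) handles this. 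The other delicate point is getting the $AC$-on-$I_n$ condition to track $x^{[n]} =^* \omega$ exactly: in the cofinite case only finitely many bumps on $I_n$ are nonzero so $F \upharpoonright I_n$ is a finite piecewise-linear function, hence $AC$; in the co-infinite case infinitely many bumps of a fixed minimum "steepness pattern" survive, and one arranges (exactly as in the $W(J)$ construction, where each wiggle contributes variation $\geq 1$ but this is tempered — here instead one wants the $AC$ $\delta$-$\eps$ condition to fail, so one makes bumps of total width $\to 0$ but total height bounded below) that no $\delta$ works for $\eps = $ (that bound). I would carry this out by: (1) fix the $\Pi^0_3$-complete target and the interval combinatorics; (2) define the bumps and verify computability of $x \mapsto F$; (3) verify $(N)$ unconditionally; (4) verify $F$ is $AC$ on $I_n$ iff the $n$-th column is cofinite, and conclude via the column-wise characterization of absolute continuity that $F$ is $AC$ iff $x$ is in the target set.
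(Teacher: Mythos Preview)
Your upper-bound sketch is fine modulo some quantifier bookkeeping. The hardness plan, however, has a genuine gap.

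You want to place piecewise-linear tents on subintervals $I_{n,m}$, include the tent iff $m \notin x^{[n]}$, keep Lusin's $(N)$ automatic, and arrange that $F$ is absolutely continuous iff every column $x^{[n]}$ is cofinite. Since $(N)$ always holds in your construction, absolute continuity of $F$ reduces to bounded variation. Here is the obstruction: continuity of $F$ at the accumulation point of the $I_{n,m}$ forces the tent heights $h_{n,m} \to 0$ as $m \to \infty$. But then there is always an infinite set $S$ with $\sum_{m \in S} h_{n,m} < \infty$ (take a sparse enough subsequence). Choosing $x$ with $\omega \setminus x^{[0]} = S$ and all other columns equal to $\omega$ gives a point \emph{outside} your target set for which $F$ has finite total variation and hence is absolutely continuous; the reduction fails on such $x$. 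Your specific choice $h_{n,m} \leq 2^{-n-m}$ makes the variation finite for \emph{every} $x$, so $F$ is always absolutely continuous. The $W(J)$-style alternative (each wiggle contributing variation $\geq 1$ regardless of width) rescues the bad case but ruins the good one: a single missing element per column already forces infinite total variation on $I$, so $F$ is not absolutely continuous even when every column is cofinite. There is no ``column-wise characterization of absolute continuity'' to appeal to here---$F$ absolutely continuous on each $I_n$ does not imply $F$ absolutely continuous on $I$ without a global bound on the variations.

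The paper's construction works in the opposite regime: it keeps bounded variation for free by making $F$ monotone, and lets the failure of absolute continuity come from the failure of $(N)$. On each $I_n$ it runs a Cantor-staircase approximation: every time a new element enters $W^X_{g(n)}$, one-third of the remaining positive-slope mass on $I_n$ is flattened. If $W^X_{g(n)}$ is finite the process halts at a piecewise-linear (hence absolutely continuous) function on $I_n$; if infinite, the limit sends a null set onto a set of full measure in $F(I_n)$, so $(N)$ fails. This sidesteps your continuity/variation tension entirely.
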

\begin{proof}
Our strategy is to define a computable reduction from $2^\omega$ 
to $C(I,I)$ whose output
approximates a version of the Cantor function (Devil's Staircase function) 
which 
will converge to a Cantor-like function only if the input is an element 
of a given $\Pi^0_3$-complete subset of $2^\omega$.  Let $A\subseteq 2^\omega$ 
be such a set and let $g(n)$ be a computable function such that 
for all $X$, $$X \in A \iff \forall n [W_{g(n)}^X \text{ is finite} ]$$
We now define a function $F:[0,1]\rightarrow [0,1]$, uniformly in $X$, 
such that $F$ is absolutely continuous if and only if 
$\forall n [W_{g(n)}^X \text{ is finite}]$ holds.

Effective in $X$, we define a computable sequence of functions $F_s$ which converge effectively and uniformly to the desired computable function $F$.  Let $F_0(x) = x$.  Each $F_s$ will be piecewise linear, containing some pieces of slope zero separated by pieces of positive slope.    Wherever $F_s$ is piecewise constant, it is equal to the limiting function $F$.

For each $n$ let $I_n=[\frac{1}{n+2},\frac{1}{n+1}]$.  This is the interval in which $W_{g(n)}^X$'s finiteness or lack thereof will be expressed.  
At stage $s+1$, let $F_{s+1}\uhr[\frac{1}{n+2},\frac{1}{n+1}] = 
F_s \uhr[\frac{1}{n+2},\frac{1}{n+1}] $ for all $n\geq s$ and for all 
$n<s$ such that no new element of $W_{g(n)}^X$ has been enumerated at 
stage $s$.  For those $n$ for which a new element is enumerated into 
$W_{g(n)}^X$, define $F_{s+1}\uhr I_n$ as follows.  For each maximal interval $I\subseteq I_n$ on which $F_s$ is constant, let $F_{s+1} \equiv F_s$ on $I$.  For each maximal interval $I$ on which $F_s$ is linear with positive slope, define $F_{s+1}$ on $I$ to satisfy:
\begin{enumerate}
\item $F_{s+1} = F_{s}$ at the endpoints of $I$
\item $F_{s+1}$ is piecewise linear, continuous, and increasing
\item $F_{s+1}$ has slope zero on $\frac{1}{3}$ of the measure of $I$, 
and positive slope everywhere else on $I$.
\item $F_s$ and $F_{s+1}$ differ by no more than $2^{-s}$ at any point.
\end{enumerate}
This can be accomplished by letting $F_{s+1}\uhr I$ resemble a sufficiently fine staircase.  The effect is that $\frac{1}{3}$ of the  measure of $I$ is given to points at which $F'(x) = 0$.  Thus if $F_s'$ was nonzero on a measure $r$ subset of $I_n$, then $F_{s+1}'$ is nonzero on a measure $\frac{2}{3}r$ subset of $I_n$.

This completes the construction. One may check that $F$ is continuous and of bounded variation.

Now suppose that it holds that $\forall n [W_{g(n)}^X \text{ is finite}]$.  Then for each $n$, there will come a stage $s$ for which $F_s\uhr I_n = F\uhr I_n$, and so the final $F$ is piecewise linear on $I_n$ for all $n$.  Then $F$ satisfies the Lusin $(N)$ property because each $I_n$ satisfies it, and there are only countably many $I_n$.  Thus $F$ is absolutely continuous.

On the other had, suppose that $W_{g(n)}^X$ is infinite for some fixed $n$.  Then letting $Z = \cup_s \{x \in I_n: F_s'(x) = 0\}$, we have $\mu(Z) = \mu(I_n)$, but $F(Z)$ is countable, since for each $s$, $\{F_s(x) : F_s'(x) = 0\}$ is finite.  But $F$ is continuous, so $F(I_n)= I_n$, so $F(I_n\setminus Z)$ has measure $\mu(I_n)$, and $F$ fails to satisfy Lusin's $(N)$.
\end{proof}

Recall that $F \in ACG$ (respectively $ACG_\ast$) if and only if 
$F$ is in $VBG$ (respectively $VBG_\ast$) and $F$ satisfies $(N)$.
{However, the set of continuous functions satisfying 
$(N)$ is $\Pi^1_1$-complete in the set of continuous functions 
\cite{HPZZ1998},}
so for the purposes of this analysis, it is easier to use a
related and strictly stronger condition, Banach's condition $(S)$.

\begin{definition}
A function $F:I \rightarrow \mathbb R$ satisfies $(S)$ if for every 
$\eps$ there is a $\delta$ such that for every set $A \subseteq I$ 
of Lebesgue measure less than $\delta$, its image $F(A)$ has 
measure less than $\eps$.
\end{definition}

In case $F$ is continuous, condition $(S)$ has a $\Pi^0_3$ equivalent 
definition similar to the definition of absolute continuity. 
For the rest of this section we adopt the convention that if $b<a$, 
$[a,b]$ will denote the interval $[b,a]$.

\begin{definition}
A function $F:I\rightarrow\mathbb R$ satisfies \emph{interval-$(S)$} if 
for every 
$\eps$ there is a $\delta$ such that for all non-decreasing sequences 
$a_0,b_0,\dots a_k, b_k$, if $\sum_i (b_i - a_i) < \delta$ and 
the intervals $[F(a_i),F(b_i)]$ are disjoint, then 
$\sum_i |F(b_i) - F(a_i)| \leq \eps$.
\end{definition}

\begin{prop}\label{prop:6}
A function $F \in C(I)$ satisfies $(S)$ if and only if it satisfies 
interval-$(S$).\end{prop}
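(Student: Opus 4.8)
The plan is to prove both directions of the equivalence directly from the definitions, using the continuity of $F$ and the compactness of $I$ to pass between ``arbitrary measurable sets $A$'' (in condition $(S)$) and ``finite families of intervals'' (in interval-$(S)$).

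\textbf{The easy direction: $(S) \Rightarrow$ interval-$(S)$.} Suppose $F$ satisfies $(S)$, and fix $\eps$; take the $\delta$ provided by $(S)$ for this $\eps$. Given a non-decreasing sequence $a_0,b_0,\dots,a_k,b_k$ with $\sum_i(b_i-a_i)<\delta$ and with the intervals $[F(a_i),F(b_i)]$ pairwise disjoint, let $A = \bigcup_i [a_i,b_i]$, so $\mu(A) \le \sum_i (b_i-a_i) < \delta$. Then $F(A) \supseteq \bigcup_i F([a_i,b_i]) \supseteq \bigcup_i [F(a_i),F(b_i)]$ by the intermediate value theorem, and since these latter intervals are disjoint, $\mu(F(A)) \ge \sum_i |F(b_i)-F(a_i)|$. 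By $(S)$, $\mu(F(A)) < \eps$, so $\sum_i |F(b_i)-F(a_i)| \le \eps$. This handles one direction with essentially no work.

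\textbf{The harder direction: interval-$(S) \Rightarrow (S)$.} Suppose $F$ satisfies interval-$(S)$, fix $\eps$, and let $\delta$ be the constant interval-$(S)$ supplies for $\eps/2$ (some slack is convenient). Given a measurable $A$ with $\mu(A) < \delta$, I would first reduce to the case where $A$ is open: choose an open $U \supseteq A$ with $\mu(U) < \delta$; it suffices to bound $\mu(F(U))$. Write $U$ as a countable disjoint union of open intervals $(c_j,d_j)$. On each component $(c_j,d_j)$ the continuous function $F$ attains a supremum $M_j$ and infimum $m_j$, and $F((c_j,d_j))$ is an interval (or point) of length $M_j - m_j$, so $\mu(F(U)) \le \sum_j (M_j - m_j)$. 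The key step is then to realize each $M_j - m_j$, up to an arbitrarily small error, as $|F(b_j) - F(a_j)|$ for suitable $a_j,b_j \in (c_j,d_j)$ chosen so that the total contribution telescopes correctly: pick $a_j, b_j$ with $F(a_j)$ close to $m_j$ and $F(b_j)$ close to $M_j$ (or vice versa, preserving the non-decreasing order of the combined sequence within each interval), then note $\sum_j (b_j - a_j) \le \sum_j (d_j - c_j) = \mu(U) < \delta$.

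The main obstacle is the disjointness hypothesis on the image intervals $[F(a_i),F(b_i)]$ in the definition of interval-$(S)$: the intervals $[m_j, M_j]$ coming from different components of $U$ need \emph{not} be disjoint, since $F$ may revisit the same range of values on far-apart parts of the domain. To get around this I would apply interval-$(S)$ only to finite subfamilies, and within each finite subfamily further subdivide: given finitely many components, use a measure-theoretic (Vitali-type or interval-decomposition) argument to write $\bigcup_j [m_j, M_j]$ as the image-length of a \emph{disjointified} finite family of sub-intervals of the $(c_j,d_j)$'s — that is, replace each $[a_j,b_j]$ by a finite partition into sub-intervals on which $F$ is monotone, then greedily select a subcollection whose image intervals are disjoint and whose image lengths sum to at least $\sum_j(M_j-m_j) - \eps/2$. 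Summing the estimate $\sum |F(b)-F(a)| \le \eps/2$ over the finitely many components and taking a supremum over finite subfamilies yields $\mu(F(U)) \le \eps$, completing the proof. I expect the bookkeeping in this disjointification step — keeping the sequence non-decreasing while ensuring image-disjointness and controlling the total length — to be the only genuinely delicate part, and it is the step I would write out in full.
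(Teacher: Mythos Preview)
Your easy direction and the overall architecture of the hard direction (reduce to an open cover, pass to finitely many component intervals, then disjointify the images) match the paper's proof. The gap is in the disjointification step you flag as delicate: you propose to ``replace each $[a_j,b_j]$ by a finite partition into sub-intervals on which $F$ is monotone,'' but a general $F\in C(I)$ need not be piecewise monotone on any interval (think of $x\sin(1/x)$ near $0$), so no such finite partition exists. Without monotone pieces, the greedy selection you sketch does not go through either, because refining a partition of the \emph{domain} gives no control over overlaps among the \emph{image} intervals.

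The paper sidesteps this by disjointifying in the range rather than the domain. For finitely many domain intervals $[a_i,b_i]$, the set $B_k=\bigcup_{i<k}F([a_i,b_i])$ is itself a finite union of closed intervals; so for any $\eps'>0$ one can pick finitely many pairwise disjoint closed intervals $[c_j,d_j]$, each contained in a single $F([a_i,b_i])$, with $\mu\bigl(B_k\setminus\bigcup_j[c_j,d_j]\bigr)<\eps'$. Each $[c_j,d_j]\subseteq F([a_i,b_i])$ is then pulled back via the intermediate value theorem to a subinterval $[a_j',b_j']\subseteq[a_i,b_i]$ with $F([a_j',b_j'])=[c_j,d_j]$ and $F(\{a_j',b_j'\})=\{c_j,d_j\}$. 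Because the images $[c_j,d_j]$ are disjoint, these preimage intervals are automatically pairwise disjoint (hence can be arranged as a non-decreasing sequence), and their total length is at most $\sum_i(b_i-a_i)<\delta$. Now interval-$(S)$ applies directly and gives $\sum_j(d_j-c_j)\le\eps_0$, whence $\mu(B_k)<\eps_0+\eps'$. Swapping in this range-side argument for your domain-side one, the rest of your outline is correct.
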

\begin{proof}
If $F$ satisfies $(S)$, then by the continuity of $F$, we have that 
$[F(a_i),F(b_i)] \subseteq F([a_i,b_i])$.  Therefore, $F$ satisfies
interval-$(S)$ with the same witnesses that it uses to satisfy $(S)$.  
On the other hand, suppose $F$ satisfies interval-$(S)$, and let 
$\varepsilon$ be given.  Let $\varepsilon_0<\varepsilon$.  
Let $\delta$ be the witness that $F$ satisfies $(S)$
for $\varepsilon_0$, 
and let $A\subseteq I$ with $\mu(A) < \delta$.  Then $A$ can be covered 
by $\cup_{i<\omega} (a_i,b_i)$ where $\mu(\cup_{i<\omega} (a_i,b_i)) <\delta$ 
as well.  It suffices to show that for each $k$ and for each $\varepsilon'>0$, 
that $\mu(B_k) < \varepsilon_0 + \varepsilon',$
where $B_k = \cup_{i<k} F([a_i,b_i])$.  By continuity of $F$, $B_k$ is a finite 
union of intervals.  Let $(c_j,d_j)_{j<\ell}$ be chosen so that
\begin{itemize}
\item For each $j$, there is an $i$ such that $[c_j,d_j] \subseteq F([a_i,b_i])$,
\item The $[c_j,d_j]$ are disjoint, and
\item $\mu(B_k \setminus \cup_{j<\ell} [c_j,d_j]) < \varepsilon'$.
\end{itemize}
For each $j$, choose $a_j'$ and $b_j'$ so that for some $i$, $a_j',b_j' \in [a_i,b_i]$,
$F([a_j',b_j']) = [c_j,d_j]$, and
$F(\{a_j',b_j'\}) = \{c_j, d_j\}$.
Then the $(a_j',b_j')_{i<\ell}$ are set up as in the definition of interval-$(S)$,
so $\mu(\cup_{j<\ell} [c_j,d_j])\leq \eps$, so $\mu(B_k) < \eps + \eps'$.  
\end{proof}

\begin{prop}\label{banachS}
If $F$ is $ACG_\ast$, then $F$ fulfills condition $(S)$.
\end{prop}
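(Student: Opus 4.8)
The plan is to reduce the claim to the structural facts about $ACG_\ast$ recalled in the excerpt. Recall first that $ACG_\ast = VBG_\ast \cap ACG$, and $ACG = VBG \cap \{F : F \text{ satisfies } (N)\}$. So if $F \in ACG_\ast$, then $F$ satisfies Lusin's condition $(N)$ and moreover $F$ is $VBG_\ast$; in particular there is a countable sequence of closed sets $E_n$ with $\bigcup_n E_n = I$ and $F$ is $VB_\ast$ on each $E_n$. The goal is to upgrade ``$(N)$ plus $VB_\ast$ on each $E_n$'' to the global condition $(S)$.

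The key step is a local-to-global argument for $(S)$, mirroring the already-noted fact that $(N)$ localizes along a countable closed cover. First I would show that if $F$ is $VB_\ast$ on a closed set $E$ and satisfies $(N)$, then $F$ restricted to $E$ satisfies a suitable local form of $(S)$ — concretely, that the function $F_{E,\ast}$ (which by Proposition~\ref{prop:linearized} is $VB$ on $I$, since $VB_\ast$ on $E$ corresponds to $VB$ of $F_{E,\ast}$) together with the $(N)$ property forces $F_{E,\ast}$ to be absolutely continuous on $I$: indeed, a continuous function of bounded variation satisfying $(N)$ is absolutely continuous, and $F_{E,\ast}$ inherits $(N)$ from $F$ because the gaps of $E$ are handled by the piecewise-linear triangular interpolation in the definition of $F_{E,\ast}$, which maps null sets to null sets. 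An absolutely continuous function automatically satisfies $(S)$: for each $\eps$ choose the $\delta$ from absolute continuity, and for $\mu(A) < \delta$ cover $A$ by finitely many intervals of total length $<\delta$; continuity gives $F(A) \subseteq F(\overline{A})$, which is covered by the finitely many images of those intervals, and the sum of their lengths is $< \eps$ by absolute continuity applied to those intervals. Hence $F_{E_n,\ast}$ satisfies $(S)$ for each $n$, and since $F$ agrees with $F_{E_n,\ast}$ on $E_n$, the restriction $F \uhr E_n$ satisfies $(S)$ with the same moduli.

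The remaining step is to patch these countably many local $(S)$ moduli into a single global one. Given $\eps$, split it as $\eps = \sum_n \eps_n$ with $\eps_n > 0$, let $\delta_n$ be a witness that $F\uhr E_n$ satisfies $(S)$ for $\eps_n$, and given a set $A$ of small measure write $A = \bigcup_n (A \cap E_n)$ (after disjointifying, $A = \bigsqcup_n A_n$ with $A_n \subseteq E_n$); if $\mu(A)$ is small enough that $\mu(A_n) < \delta_n$ for the relevant finite initial segment of $n$'s and the tail contributes a controllable amount, then $\mu(F(A)) \le \sum_n \mu(F(A_n)) \le \sum_n \eps_n = \eps$. The main obstacle is that a single $\delta$ must control $\mu(A_n) < \delta_n$ for infinitely many $n$ at once, which is false for an arbitrary decomposition; the fix is to use that $F$ is also globally $VBG_\ast$ — equivalently (by the derivation process of Theorem~\ref{thm:7}) that the derivation $P^\alpha_{F,VB_\ast}$ reaches $\emptyset$ at some countable $\alpha$ — so that $I$ is exhausted by the intervals removed at each stage and only finitely many of the $E_n$ (those removed at stages below a given finite level) carry any appreciable measure; alternatively, one reduces to a clopen-like finite subcover of $E_n$'s after choosing a finite $N$ so that $F$ has oscillation $< \eps$ outside $\bigcup_{n<N} E_n$ up to a set of small image. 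Carefully arranging this bookkeeping — which finite piece of the cover to use and how to absorb the tail into the $\eps$-budget — is the one place the argument needs genuine care; everything else is routine given Proposition~\ref{prop:linearized} and the classical equivalence $AC = VB + (N)$.
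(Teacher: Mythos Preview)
Your local analysis is fine: each $F_{E_n,\ast}$ is $VB$ on $I$ by Proposition~\ref{prop:linearized}, inherits $(N)$ from $F$, is therefore absolutely continuous, and hence satisfies $(S)$; so $F\uhr E_n$ satisfies $(S)$ for each $n$. The gap is entirely in the patching step, and it is a real gap, not just missing bookkeeping. Condition $(S)$ is a quantitative uniformity, and unlike $(N)$ it does \emph{not} localize along an arbitrary countable closed cover: from moduli $\delta_n$ witnessing $(S)$ on $E_n$ for target $\eps_n$, you need a single $\delta$ with $\mu(A)<\delta$ forcing $\mu(F(A\cap E_n))<\eps_n$ simultaneously for all $n$, and nothing you have said produces this. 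Your first proposed fix --- that ``only finitely many of the $E_n$ carry appreciable measure'' because the $VB_\ast$-derivation terminates --- is simply false: the derivation may have length any countable ordinal, and the sets removed at each stage need not be finite in number or large in measure. Your second fix --- choosing $N$ so that $F$ has small oscillation outside $\bigcup_{n<N}E_n$ --- has no basis either, since $\bigcup_{n<N}E_n$ can be nowhere dense and there is no compactness giving a finite subcover.

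The paper bypasses patching entirely. It invokes Banach's condition $(T_1)$ (the set of $y$ with $F^{-1}(\{y\})$ infinite is null) and the classical equivalence, for continuous $F$, that $(S)\iff (N)\wedge(T_1)$. The point is that every $VBG_\ast$ function satisfies $(T_1)$ --- this is the global structural fact that replaces your attempted gluing --- and $ACG_\ast\subseteq ACG$ gives $(N)$, so $(S)$ follows in one line. What your argument is missing is exactly a mechanism preventing $F$ from folding a small set across infinitely many $E_n$ so that the images accumulate measure; $(T_1)$ is precisely that mechanism, and it does not emerge from assembling the local $(S)$ moduli.
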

\begin{proof}
We make use of Banach's property $(T_1)$.
By definition, a function $F$ satisfies Banach's property $(T_1)$ if 
$\{x \in \mathbb R : F^{-1}(\{x\}) \text{ is infinite}\}$ is null.  
It is known (cf. \cite[Thm IX.8.4, pg 284; Thm IX.6.3, pg 279]{saks}) 
that a continuous function satisfies $(S)$ if and only if it satisfies 
$(N)$ and $(T_1)$, and that any $F \in VBG_\ast$ satisfies $(T_1)$. 
Since any $F \in ACG$ satisfies $(N)$, it follows that any $F \in ACG_\ast$ 
satisfies $(N)$ and $(T_1)$, and therefore $(S)$.
\end{proof}

\begin{prop}
Let $\alpha < \omega_1^Y$ and $\alpha >1$.  Then $ACG_{\ast\alpha}$
is $\Sigma^0_{2\alpha}(Y)$.
\end{prop}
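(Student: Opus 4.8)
The plan is to prove that $ACG_{\ast\alpha}$ is $\Sigma^0_{2\alpha}(Y)$ by effective transfinite recursion, mirroring the structure of the proof of Proposition~\ref{VBGsigma2alpha} but replacing ``bounded variation'' with ``absolutely continuous'' in the successor step. Recall $F \in ACG_\ast$ iff $F \in VBG_\ast$ and $F$ satisfies $(N)$, and by Proposition~\ref{banachS} any $F \in ACG_\ast$ also satisfies $(S)$; conversely, since $(S) \Rightarrow (N)$, we have $F \in ACG_\ast$ iff $F \in VBG_\ast$ and $F$ satisfies $(S)$. The key observation, driving down the quantifier count, is that the derivation processes for $AC_\ast$ and $VB_\ast$ agree on functions satisfying $(S)$: if $F$ satisfies $(S)$, then $F$ is $AC_\ast$ on a closed set $E$ iff $F$ is $VB_\ast$ on $E$ (being $VB_\ast$ gives absolute continuity via condition $(N)$, which follows from $(S)$ restricted to $E$). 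Hence for $F$ satisfying $(S)$ we have $P^\alpha_{F,AC_\ast} = P^\alpha_{F,VB_\ast}$ for all $\alpha$, and $|F|_{AC_\ast} = |F|_{VB_\ast}$.

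The argument then runs as follows. First I would handle the relationship between the conditions: since $\alpha > 1$, membership in $ACG_{\ast\alpha}$ implies $F \in ACG_\ast$, so $F$ satisfies $(S)$. Thus
$$ACG_{\ast\alpha} = \{F : F \text{ satisfies } (S)\} \cap \{F : |F|_{VB_\ast} \leq \alpha\}.$$
By Propositions~\ref{prop:6} and the $\Pi^0_3$ form of interval-$(S)$, the first set is $\Pi^0_3 \subseteq \Sigma^0_4(Y) \subseteq \Sigma^0_{2\alpha}(Y)$ since $\alpha \geq 2$. By Proposition~\ref{VBGsigma2alpha}, the second set is $\Sigma^0_{2\alpha}(Y)$. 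The intersection of a $\Sigma^0_{2\alpha}(Y)$ set with a $\Pi^0_3$ set is $\Sigma^0_{2\alpha}(Y)$ (for $\alpha \geq 2$, since $\Pi^0_3 \subseteq \Delta^0_{2\alpha}(Y) \subseteq \Sigma^0_{2\alpha}(Y)$ and $\Sigma^0_{2\alpha}(Y)$ is closed under intersection with lower-level Borel sets). This gives the bound directly without any recursion at all, provided the $(S)$-condition has the claimed complexity and Proposition~\ref{VBGsigma2alpha} is in hand.

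Alternatively, if one wants a self-contained recursion giving a uniform bound on the relativized approximation to $P^\alpha_{F,AC_\ast}$, I would set up the recursion exactly as in Proposition~\ref{VBGsigma2alpha}: at $\alpha = 2$ (the base, since $P^1_{F,AC_\ast} = \emptyset$ iff $F$ is absolutely continuous, which is $\Pi^0_3$, hence $P^2_{F,AC_\ast} \cap [p,q] = \emptyset$ is $\Sigma^0_4(Y)$), and at successor steps use the identity $P^{\alpha+1}_{F,AC_\ast} = P^{\alpha+1}_{F,VB_\ast}$ on the locus where $F$ satisfies $(S)$, together with the characterization via $F_{E,\ast}$. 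The successor step needs: (i) a $\Sigma^0_{2\alpha+2}(Y)$ test for $P^{\alpha+1}_{F,VB_\ast}\cap[p,q] = \emptyset$, which is exactly the cost from Proposition~\ref{VBGsigma2alpha} since being $VB$ on $[p',q']$ is $\Sigma^0_2$; and (ii) an uncostly global check of $(S)$, which is $\Pi^0_3$ and absorbed into $\Sigma^0_{2\alpha+2}(Y)$. Limits go through $\Sigma^0_\lambda(Y) = \Sigma^0_{2\lambda}(Y)$ as before.

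The main obstacle is establishing rigorously that $P^\alpha_{F, AC_\ast} = P^\alpha_{F, VB_\ast}$ whenever $F$ satisfies $(S)$ — i.e.\ that, for a function satisfying $(S)$, being $VB_\ast$ on a closed interval-intersection $[a,b]\cap E$ already forces being $AC_\ast$ there. This requires: $F$ satisfies $(S)$ globally $\Rightarrow$ $F$ satisfies $(N)$ on $[a,b]\cap E$ (clear, as null-sets stay null); then $F$ being $VB$ on $[a,b]\cap E$ together with $(N)$ on $[a,b]\cap E$ gives $F$ is $AC$ on $[a,b]\cap E$ (this is the classical ``$VB + (N) \Rightarrow AC$'' fact, cited already in the excerpt); and finally, via Proposition~\ref{prop:linearized}, transferring between the $VB_\ast$/$AC_\ast$ conditions on $E$ and the $VB$/$AC$ conditions on $I$ for $F_{E,\ast}$ — here one must confirm $F_{E,\ast}$ inherits $(N)$, using the remark that $\omega(F_{E,\ast},[c,d]) = \omega(F,[c,d])$ on components $(c,d)$ of $I\setminus E$. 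Once this identification of the two derivation processes on the $(S)$-locus is nailed down, the complexity bound follows immediately from the already-established Proposition~\ref{VBGsigma2alpha} and the $\Pi^0_3$ bound on condition $(S)$.
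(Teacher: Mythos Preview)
Your proposal is correct and follows essentially the same route as the paper: both arguments reduce to the identity $ACG_{\ast\alpha} = VBG_{\ast\alpha} \cap \{F : F \text{ satisfies interval-}(S)\}$, established by showing that on the $(N)$-locus the $AC_\ast$ and $VB_\ast$ derivations coincide (because $F_{E,\ast}$ inherits $(N)$ from $F$), and then invoking Proposition~\ref{VBGsigma2alpha} together with the $\Pi^0_3$ bound on interval-$(S)$ for $\alpha\geq 2$. Your identification of the key obstacle---that $F_{E,\ast}$ inherits $(N)$ so that $VB_\ast$ on $E$ upgrades to $AC_\ast$ on $E$---matches the paper's pivot exactly.
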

\begin{proof}
Recall that a continuous function $F$ is absolutely continuous if and only if 
it is of bounded variation and satisfies property $(N)$.  Furthermore, 
for any closed set $E$, the functions $F_E$ and $F_{E,\ast}$ have 
property $(N)$ if $F$ does, because their linear portions cannot contribute 
to a failure of $(N)$.  Therefore, if $F$ has the property $(N)$, 
then we have not only that $F \in ACG$ (resp. $ACG_\ast$) if and only 
if $F \in VBG$ (resp. $VBG_\ast$), but also that $|F|_{AC} = |F|_{VB}$ 
(resp. $|F|_{AC_\ast} = |F|_{VB_\ast}$) in this case.  This is because 
in the derivation process, the functions $F_E$ (respectively $F_{E,\ast}$) 
are absolutely continuous on an interval 
if and only if they are of bounded variation on that interval.  Since 
$(N)$ is also necessary for members of $ACG$ and $ACG_\ast$, we have
$$ACG_\alpha = VBG_\alpha \cap \{F \in C(I) : F \text{ satisfies } (N)\}$$
and similarly for $ACG_{\ast\alpha}$.  
However, {due to the $\Pi^1_1$-complete complexity \cite{HPZZ1998}
of determining whether $F$ satisfies $(N)$}, we cannot use this.  
What we can use in the case of $ACG_\ast$ 
is the $\Pi^0_3$ property interval-$(S)$, which is implied by $ACG_\ast$ 
and implies $(N)$.  Therefore,
$$ACG_{\ast\alpha} = VBG_{\ast\alpha} \cap \{F \in C(I) : F \text{ satisfies interval-}(S)\}$$
and a similar statement does NOT hold for $ACG$, as there are functions 
in $ACG$ that do not satisfy $(S)$.  Since satisfying interval-$(S)$ 
is only $\Pi^0_3$, this equality and Proposition \ref{VBGsigma2alpha} 
together complete the proof for all $\alpha>1$.
\end{proof}
We conclude this section by summarizing the results into the main theorem 
mentioned in the introduction.
\begin{theorem}
Let $Y \in 2^\omega$, 
let $1<\alpha < \omega_1^Y$, 
and let $A_\alpha = VBG_{\ast\alpha}, VBG_\alpha$ or $ACG_{\ast\alpha}.$  Then
$A_\alpha$ is $\Sigma^0_{2\alpha}(Y)$, and for any 
$\Sigma^0_{2\alpha}(Y)$ set $B$, there is a $Y$-computable reduction from 
$B$ to $A_\alpha$.
In particular, $A_\alpha$ is $\mathbf \Sigma^0_{2\alpha}$-complete,
and if $\alpha< \omega_1^{CK}$, then $A_\alpha$ is $\Sigma^0_{2\alpha}$-complete.
\end{theorem}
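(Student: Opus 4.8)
The plan is to assemble the final theorem purely by combining results already established in the excerpt, so the ``proof'' is little more than a citation of the preceding propositions together with the standard passage between lightface and boldface via relativization. Concretely, I would proceed as follows.

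\textbf{Upper bound.} For each of the three cases $A_\alpha = VBG_{\ast\alpha}$, $VBG_\alpha$, or $ACG_{\ast\alpha}$, the membership $A_\alpha \in \Sigma^0_{2\alpha}(Y)$ is exactly Proposition \ref{VBGsigma2alpha} (for the first two) and the last Proposition of the $ACG_\ast$ subsection (for the third). These are stated uniformly in $\alpha < \omega_1^Y$ with the oracle $Y$ folded in, so there is nothing further to check; I would simply invoke them.

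\textbf{Hardness.} For the reduction direction, I would appeal to the Corollary at the end of Section \ref{sec:completeness}: given a $\Sigma^0_{2\alpha}(Y)$ set $B$, there is a $Y$-computable reduction from $(B, \neg B)$ to $(ACG_{\ast\alpha}, \neg VBG_\alpha)$, obtained from the map $T \mapsto F_T$ via Theorem \ref{ladredst} (which supplies a $Y$-computable reduction from $B$ to $A_\alpha^{ls} = \{T : |T|_{ls} \leq \alpha\}$, noting $\alpha > 1$ so Theorem \ref{ladrthm} applies after relativization). Since $ACG_{\ast\alpha} \subseteq VBG_{\ast\alpha} \subseteq VBG_\alpha$ and $ACG_{\ast\alpha} \subseteq ACG_{\ast\alpha}$ trivially, the same map $X \mapsto F_{T_X}$ (where $X \mapsto T_X$ is the reduction from $B$ to the limsup-rank set) reduces $B$ to each of $VBG_{\ast\alpha}$, $VBG_\alpha$, and $ACG_{\ast\alpha}$: if $X \in B$ then $|T_X|_{ls} \leq \alpha$ so $F_{T_X} \in ACG_{\ast\alpha} \subseteq A_\alpha$, and if $X \notin B$ then either $T_X \notin WF$, whence $F_{T_X} \notin VBG \supseteq A_\alpha$ by Proposition \ref{prop:14}, or $|T_X|_{ls} > \alpha$, whence $|F_{T_X}|_{X'} > \alpha$ for the relevant $X' \in \{VB, VB_\ast, AC_\ast\}$ by the Proposition computing $|F(T,I)|_X = |T|_{ls}$, so again $F_{T_X} \notin A_\alpha$. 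Hence $X \in B \iff F_{T_X} \in A_\alpha$, and the map is $Y$-computable.

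\textbf{Boldface and lightface corollaries.} The ``in particular'' clauses follow formally: $\mathbf\Sigma^0_{2\alpha}$-completeness is obtained by taking the union over all $Y$ with $\alpha < \omega_1^Y$ and using the identity $\mathbf\Sigma^0_\beta = \bigcup_{Y} \Sigma^0_\beta(Y)$ recalled in the Preliminaries, together with the fact that each $A_\alpha$ is $\mathbf\Sigma^0_{2\alpha}$ (being $\Sigma^0_{2\alpha}(Y)$ for such $Y$) and $\mathbf\Sigma^0_{2\alpha}$-hard (being $\Sigma^0_{2\alpha}(Y)$-hard for a fixed such $Y$, hence hard for any $\mathbf\Sigma^0_{2\alpha}$ set after choosing $Y$ to compute that set's Borel code); and the case $Y$ computable with $\alpha < \omega_1^{CK}$ gives the lightface statement directly. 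There is no real obstacle here — the only thing to be careful about is the bookkeeping that the reductions of Proposition \ref{VBGsigma2alpha} and the Corollary of Section \ref{sec:completeness} are indeed uniform in $\alpha$ and correctly incorporate the oracle $Y$, but that uniformity is built into their statements. I would therefore present this theorem with a short proof that simply says: ``Combine Proposition \ref{VBGsigma2alpha} and the last proposition of the previous subsection for the upper bound, and the Corollary of Section \ref{sec:completeness} for hardness; the boldface and lightface consequences follow as in Theorem \ref{ladredst}.''
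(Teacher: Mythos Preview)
Your proposal is correct and matches the paper's approach exactly: the paper presents this theorem explicitly as a summary (``We conclude this section by summarizing the results into the main theorem mentioned in the introduction'') with no separate proof, so the argument is precisely the combination of Proposition~\ref{VBGsigma2alpha} and the $ACG_{\ast\alpha}$ proposition for the upper bound, the Corollary of Section~\ref{sec:completeness} for hardness, and the relativization/boldface reasoning as in Theorem~\ref{ladredst}. Your unpacking of why the single reduction $T\mapsto F_T$ simultaneously handles all three hierarchies via the containments $ACG_{\ast\alpha}\subseteq VBG_{\ast\alpha}\subseteq VBG_\alpha$ is exactly the point behind the Corollary's formulation as a reduction to the pair $(ACG_{\ast\alpha},\neg VBG_\alpha)$.
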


\section{Questions}\label{sec:questions}

Letting $|T|$ denote the usual well-founded rank of a tree $T\in WF$, a proof of the 
following in slightly different language can be found in \cite[Propositions 2.12 \& 2.15]
{GreenbergMontalbanSlaman2013}.
\begin{theorem}[\cite{GreenbergMontalbanSlaman2013}]
If $\alpha< \omega_1^{ck}$, then $\{T : |T| \leq \omega\alpha\}$ is 
$\Sigma^0_{2\alpha}$-complete.
\end{theorem}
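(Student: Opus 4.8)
The plan is to relate the usual well-founded rank $|T|$ to the limsup rank $|T|_{ls}$ and then invoke Theorem \ref{ladrthm}. First I would establish the key comparison: for a nonempty well-founded tree $T$, one has $|T|_{ls} = \alpha$ if and only if $\omega(\alpha-1) < |T| \leq \omega\alpha$ — or more precisely, that $|T|_{ls} \leq \alpha$ exactly when $|T| \leq \omega\alpha$. The intuition is that the limsup rank only increments across a ``$\limsup$'' event, which in terms of the ordinary rank corresponds exactly to crossing a multiple of $\omega$: a node of ordinary rank $\omega\beta + k$ for finite $k$ contributes the same limsup rank as one of ordinary rank $\omega\beta$, because finitely many rank increments get absorbed by the $\sup_n$ term, whereas genuinely reaching $\omega\beta$ requires a $\limsup$ of unbounded lower ranks, which bumps the limsup rank by one. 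This should be proved by induction on $|T|$, carefully analyzing the recursion $|T|_{ls} = \max(\sup_n |T_n|_{ls}, (\limsup_n |T_n|_{ls}) + 1)$ against the recursion $|T| = \sup_n (|T_n| + 1)$.

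Second, with the identity $\{T : |T| \leq \omega\alpha\} = \{T \in WF : |T|_{ls} \leq \alpha\} \cup \{\text{the empty tree}\}$ (modulo coding conventions and the trivial adjustment at the root-only tree, which has $|T| = 1 \leq \omega\alpha$ and $|T|_{ls} = 1 \leq \alpha$ for $\alpha \geq 1$), the result follows immediately from Theorem \ref{ladredst} applied with the trivial oracle, or from Theorem \ref{ladrthm} in the effective setting, since $\alpha < \omega_1^{ck}$ and $\alpha > 0$. The membership of the empty/partial-code cases is a boundary issue handled as in the remark following Theorem \ref{ladrthm}: for trees presented as actual elements of $Tr$ rather than indices, there is no totality obstruction, so the $\Sigma^0_{2\alpha}$ upper bound and the hardness reduction transfer directly.

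The main obstacle I expect is getting the comparison between the two ranks exactly right at the boundary, especially the off-by-one bookkeeping. One must be careful that $|T|_{ls}$ is always a successor ordinal, so the statement $|T|_{ls} \leq \alpha$ really is the natural thing to match against $|T| \leq \omega\alpha$, and one must check the limit-ordinal case of $\alpha$ separately: if $\alpha$ is a limit then $\omega\alpha$ is a limit of the form $\sup_{\beta < \alpha} \omega\beta$, and one needs $|T| \leq \omega\alpha$ iff $|T| < \omega\alpha$ iff $|T| \leq \omega\beta$ for some $\beta < \alpha$ iff $|T|_{ls} \leq \beta$ for some $\beta < \alpha$ iff $|T|_{ls} \leq \alpha$ (using again that $|T|_{ls}$ is a successor, hence $|T|_{ls} < \alpha$ already when $\alpha$ is a limit). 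Once this dictionary is nailed down, the descriptive-complexity conclusion is just a restatement of the previously proved theorem, so the cited propositions in \cite{GreenbergMontalbanSlaman2013} presumably carry out exactly this kind of rank translation in their own language.
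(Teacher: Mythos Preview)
Your approach rests on the claim that $|T|_{ls}\leq\alpha$ if and only if $|T|\leq\omega\alpha$, but this equivalence is false. Take $T$ to be the root together with infinitely many leaf children, so that $T_n=\{\emptyset\}$ for every $n$. Then $|T_n|=1$ and $|T_n|_{ls}=1$ for all $n$, giving $|T|=2$ while $\limsup_n|T_n|_{ls}=1$ forces $|T|_{ls}=2$. Thus $|T|\leq\omega$ yet $|T|_{ls}\not\leq 1$. The point is that the limsup rank is sensitive to the \emph{shape} of the branching (how many immediate subtrees are nonempty), not merely to the height: two well-founded trees of the same ordinary rank can have different limsup ranks, so no function of $|T|$ alone can recover $|T|_{ls}$.

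Note also that the paper does not prove this theorem; it is quoted from \cite{GreenbergMontalbanSlaman2013} and placed in the Questions section as context. The immediately following open question asks for \emph{natural} computable reductions between $\{T:|T|_{ls}\leq\alpha\}$ and $\{T:|T|\leq\omega\alpha\}$ in each direction. If your equivalence held, the identity map would serve and the question would be vacuous. The author remarks that the two sets are inter-reducible only because both are $\Sigma^0_{2\alpha}$-complete (so one can pass through a universal set), and explicitly flags the absence of a direct rank-to-rank translation as open. Your proposal, if it worked, would resolve that question---which is a strong signal that the key lemma needs to be rechecked.
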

Therefore, the complexity of initial segments of the usual well-founded rank increases 
by two jumps for every increase of $\omega$ in the rank.  
By contrast, all of 
the natural ranks considered in this paper have an increased complexity of two jumps for every 
increase of 1 in the rank.  We wonder if there are examples of natural $\Pi^1_1$ ranks that 
don't fit one of these two patterns.
\begin{question}
Is there an example of a $\Pi^1_1$ set $A$, 
a natural $\Pi^1_1$ rank which decomposes it as 
$A = \cup_{\alpha<\omega_1} A_\alpha$, 
and ordinals $\beta_\alpha$ such that
each $A_\alpha$ is $\Gamma_{\beta_\alpha}$-complete (here $\Gamma=\Sigma$ or $\Pi$)
such that one of the following holds:
\begin{enumerate}
\item $\beta_{\alpha} + i = \beta_{\alpha+\delta}$ for some $\delta > \omega$ and $i\leq 2$, or
\item $\beta_{\alpha} + i = \beta_{\alpha+1} $ for some $i >2$.
\end{enumerate}
\end{question}
It is open whether $ACG$ provides an example satisfying part (2) of the above. 
Naively, the derivation process producing the classical rank on $ACG$ could require 
$i=4$.
\begin{question}
What are the exact descriptive complexities of the sets $ACG_\alpha$?
\end{question}

Finally, although $\{T \in WF : |T|_{ls} \leq \alpha\}$ and 
$\{T \in WF : |T| \leq \omega\alpha\}$ are both $\Sigma^0_{2\alpha}$-complete,
and thus there is a reduction from one to the other that passes through 
the universal set $\{X : n_0 \in H_{2^a}^X\}$ for appropriate $a$, 
we are not aware of a natural reduction between them.

\begin{question}
Give natural computable functions $f$ and $g$ such that $f^{-1}(\{T : |T|_{ls} \leq \alpha\}) = \{T : |T| \leq \omega\alpha\}$ and $g^{-1}(\{T : |T|\leq \omega\alpha\}) = \{T : |T|_{ls} \leq \alpha\}$.
\end{question}

\bibliographystyle{alpha}
\bibliography{denjoy}

\end{document}